\newtheorem{theorem}[equation]{Theorem}
\newtheorem{corollary}[equation]{Corollary}
\newtheorem{definition}[equation]{Definition}
\newtheorem{lemma}[equation]{Lemma}
\newtheorem{proposition}[equation]{Proposition}
\newtheorem{remarkplain}[equation]{Remark}
\newtheorem{exampleplain}[equation]{Example}
\renewcommand{\sec}[1]{\section{#1}
\renewcommand{\theequation}{\thesection.\arabic{equation}}
  \setcounter{equation}{0}}
\newcommand{\subsec}[1]{\subsection{#1}
\renewcommand{\theequation}{\thesubsection.\arabic{equation}}
  \setcounter{equation}{0}}
\def\danger{\begin{trivlist}\item[]\noindent%
\begingroup\hangindent=3pc\hangafter=-2
\def\par{\endgraf\endgroup}%
\hbox to0pt{\hskip-\hangindent\dbend\hfill}\ignorespaces}
\def\enddanger{\par\end{trivlist}}
\newcommand{\Gext}{\negthinspace\negthinspace\phantom{a}^\delta G}
\newcommand{\qed}{\hfill $\square$ \medskip}
\newenvironment{proof}[1][Proof]{\noindent\textbf{#1.} }{\qed}
\newcommand{\Aut}{\mathrm{Aut}}
\newcommand{\Inv}{\mathrm{Invol}}
\newcommand{\oInv}{\overline{\mathrm{Invol}}}
\newcommand{\diag}{\mathrm{diag}}
\newcommand{\Out}{\mathrm{Out}}
\newcommand{\Int}{\mathrm{Int}}
\renewcommand{\int}{\mathrm{int}}
\newcommand{\zinv}{\mathrm{inv}}
\newcommand{\SRF}{\mathrm{SRF}}
\newcommand{\Gad}{G_\mathrm{ad}}
\newcommand{\Gsc}{G_\mathrm{sc}}
\newcommand{\Zsc}{Z_\mathrm{sc}}
\newcommand{\Gbar}{\overline G}
\newcommand{\Gal}{\mathrm{Gal}}
\newcommand{\Norm}{\mathrm{Norm}}
\newcommand{\Cent}{\mathrm{Cent}}
\newcommand{\R}{\mathbb R}
\newcommand{\C}{\mathbb C}
\newcommand{\Z}{\mathbb Z}
\newcommand{\Ztwo}{\mathbb Z_2}
\newcommand{\G}{G}
\renewcommand{\H}{\mathbb H}
\newcommand{\ch}[1]{#1^\vee}
\newcommand\sigmaqc{\sigma_{\text{qc}}}
\newcommand\thetaqc{\theta_{\text{qc}}}
\newcommand{\g}{\mathfrak g}
\newcommand\inv{^{-1}}
\newcommand\wh{\widehat}
\newcommand{\Spin}{\text{Spin}}
\begin{document}
\title{Galois and $\theta$ Cohomology of Real Groups}
\author{Jeffrey Adams}
\maketitle

{\renewcommand{\thefootnote}{} 
\footnote{2000 Mathematics Subject Classification: 11E72 (Primary), 20G10, 20G20}
\footnote{Supported in part by  National Science
Foundation Grant \#DMS-1317523}}

\section*{Abstract}
Suppose $G$ is a connected, reductive algebraic group.
A real form of $G$ is an antiholomorphic involution $\sigma$, so
$G(\R)=G(\C)^\sigma$ is a real Lie group.
Let$H^i_\sigma(\Gamma,G)$ be the Galois cohomology of $G$ with respect to $\sigma$.
The Cartan involution of a
real form is a holomorphic involution $\theta$ of $G$, commuting with
$\sigma$, so that $G(\R)^\theta$ is a maximal compact subgroup of $G(\R)$.
Let $H^i_\theta(\Ztwo,G)$ be the cohomology of $G$ where the action of
the nontrivial element of $\Ztwo$ is by $\theta$. The main result is
there is a canonical isomorphism $H^i_\sigma(\Gamma,G)\simeq
H^i_\theta(\Ztwo,G)$. We give several applications, including a computation of
$H^1(\Gamma,G)$ for all simple, simply connected $G$.

\sec{Introduction}

Suppose $G$ is a connected reductive algebraic group, defined over
$\R$. So, identifying $G$ with its complex points $G(\C)$, we are
given an antiholomorphic involution $\sigma$ of $G$, and
$G(\R)=G(\C)^\sigma$ is a real Lie group.  Let $\Gamma=\Gal(\C/\R)$
and write $H^i(\Gamma,G)$ for the Galois cohomology of $G$.  If we
want to specify how the nontrivial element of $\Gamma$ acts we will
write $H^i_\sigma(\Gamma,G)$.  The real forms of $G$, which are inner
to $\sigma$ (see Section \ref{s:galois}), are parametrized by
$H^1(\Gamma,\Gad)$ where $\Gad$ is the adjoint group.

On the other hand Cartan classified the real forms of $G$ in terms of
holomorphic involutions as follows.  Associated to $\sigma$ is a
Cartan involution $\theta$ of $G$. This is a holomorphic involution,
commuting with $\sigma$, such that $K(\R)=G(\R)^\theta$ is a maximal
compact subgroup of $G(\R)$. Then $K(\R)$ is the real points of the
complex, reductive, (possibly disconnected) group $K=G^\theta$.
Conversely $\sigma$ is determined by $\theta$, and 
the real forms inner to $\sigma$  can also be parametrized by  
involutions inner to $\theta$. 
 See Example \ref{ex:realforms} for
a precise statement.

Let $H^i(\Ztwo,G)$, or $H^i_\theta(\Ztwo,G)$ be the group cohomology of $G$
where the nontrivial element of $\Ztwo=\Z/2\Z$ acts by $\theta$.  Then
conjugacy classes of involutions which are inner to $\theta$ are
parametrized by $H^1_\theta(\Ztwo,\Gad)$.  Thus the equivalence
of the two classifications of real forms amounts to an isomorphism
$H^1_\sigma(\Gamma,\Gad)\simeq H^1_\theta(\Ztwo,\Gad)$.

It is natural to ask if the same isomorphism holds in general.

\begin{theorem}
\label{t:main}
Suppose $G$ is defined over $\R$, given by an antiholomorphic involution $\sigma$. 
Let $\theta$ be a 
corresponding Cartan involution. 
Then there is a canonical isomorphism $H^1_\sigma(\Gamma,G)\simeq H^1_\theta(\Ztwo,G)$. 
\end{theorem}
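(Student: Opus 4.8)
The plan is to reinterpret both sides as sets of conjugacy classes of involutions and then to exhibit a single set of cocycles on which the two theories literally coincide. Write $\sigma_c=\sigma\theta=\theta\sigma$ for the compact real form attached to the commuting pair $(\sigma,\theta)$, let $U=G^{\sigma_c}$ be the corresponding maximal compact subgroup of $G$, and form the abstract (real Lie) group $\widehat G=G\rtimes\langle\theta,\sigma_c\rangle$, in which $\theta$ and $\sigma_c$ commute and $\sigma=\theta\sigma_c$. A $1$-cocycle for $\theta$ is an element $g$ with $g\theta(g)=1$, equivalently an element $g$ such that $g\theta$ is an involution lying in the coset $G\theta\subset\widehat G$; the coboundary relation $g\sim h^{-1}g\theta(h)$ is exactly $G$-conjugacy, since $h^{-1}(g\theta)h=(h^{-1}g\theta(h))\theta$. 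Thus $H^1_\theta(\Ztwo,G)$ is identified with the set of $G$-conjugacy classes of involutions in the coset $G\theta$, and, by the same bookkeeping, $H^1_\sigma(\Gamma,G)$ with the $G$-conjugacy classes of involutions in $G\sigma$.

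The elementary observation driving the comparison is that the two cocycle conditions agree on $U$. For $u\in U$ one has $\sigma(u)=\sigma_c\theta(u)=\sigma_c(\theta(u))=\theta(u)$, because $\theta$ preserves $U$ and $\sigma_c$ fixes it pointwise; hence $\sigma$ and $\theta$ restrict to the \emph{same} map on $U$. Consequently the set $Z=\{u\in U: u\theta(u)=1\}=\{u\in U: u\sigma(u)=1\}$ consists simultaneously of $\theta$-cocycles and of $\sigma$-cocycles, and the coboundary relations defined using conjugators drawn from $U$ coincide on $Z$. The identity map of $Z$ is therefore the candidate canonical comparison, and everything reduces to showing that $Z$, together with $U$-conjugacy, already computes each cohomology set: explicitly, (L1) every cohomology class meets $Z$, and (L2) if two elements of $Z$ are related by a twisted coboundary with $h\in G$, they are already related by one with $h\in U$.

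Both lemmas are instances of the single principle that the compact form controls the associated symmetric space, and I would prove them uniformly for $\sigma$ and for $\theta$. Given a cocycle $g$, the involution $\Int(g)\theta$ acts on the contractible space of compact forms of $G$, and by a Cartan fixed-point argument it has a fixed point, producing a $G$-conjugate $g'$ for which $\Int(g')\theta$ commutes with $\sigma_c$; the same argument, applied to the relative position of two $\sigma_c$-compatible representatives, yields (L2). At the computational level this is governed by the polar decomposition $G=U\exp(i\mathfrak{u})$ relative to $\sigma_c$: writing $g=u\exp(X)$ with $u\in U$ and $X\in i\mathfrak{u}$, uniqueness of the decomposition applied to $\theta(g)=g^{-1}$ forces $u\theta(u)=1$ and $\theta(X)=-\mathrm{Ad}(u)X$, isolating the compact part $u$ as the sought-after representative.

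The main obstacle is a residual central discrepancy, and I expect it to be the technical heart of the argument. Commuting of $\Int(g')\theta$ with $\sigma_c$ only gives $\sigma_c(g')=z\,g'$ for some $z\in Z(G)$, rather than $g'\in U$ outright. Using $g'\theta(g')=1$ one finds $\theta(z)=\sigma_c(z)=z^{-1}$, whence $\sigma(z)=\sigma_c(z^{-1})=z$, so $z$ is a genuine central obstruction; eliminating it by a further coboundary amounts to solving an equation of the form $z=(1-\sigma_c)(\theta-1)a$ for a central $a$. I would settle this by reducing to the center, a group of multiplicative type, and ultimately to the case of a torus, where the cohomology of $\Ztwo$ under the commuting involutions $\theta$ and $\sigma_c$ is explicitly computable and the required class can be shown to vanish. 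Granting (L1), (L2), and this central vanishing, the identity map of $Z$ descends to a chain of bijections $H^1_\theta(\Ztwo,G)\cong Z/\!\!\sim_U\,\cong H^1_\sigma(\Gamma,G)$, which is canonical precisely because it is induced by an equality of cocycle sets rather than by any auxiliary choice.
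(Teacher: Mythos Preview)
Your approach is sound and genuinely different from the paper's. The paper does not pass through the compact real form $U=G^{\sigma_c}$ at all; instead it reduces both cohomology sets to a \emph{fundamental Cartan subgroup} $H_f$ by proving bijections
\[
H^1_\sigma(\Gamma,H_f)/W_i \;\xrightarrow{\ \sim\ }\; H^1_\sigma(\Gamma,G),
\qquad
H^1_\theta(\Ztwo,H_f)/W_i \;\xrightarrow{\ \sim\ }\; H^1_\theta(\Ztwo,G),
\]
and then matches the left-hand sides via the $2$--torsion subgroup $H_2\subset H_f$, on which $\sigma$ and $\theta$ agree. Your argument instead shows that the inclusion $U\hookrightarrow G$ induces bijections $H^1_\tau(\cdot,U)\simeq H^1_\tau(\cdot,G)$ for $\tau\in\{\sigma,\theta\}$, and identifies the two $H^1(\cdot,U)$'s because $\sigma|_U=\theta|_U$. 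The paper's route is more algebraic and directly feeds the later computations (everything is expressed on a torus modulo a Weyl group); yours is more geometric and arguably more conceptual.

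That said, your ``central obstruction'' is a self-inflicted complication. It arises only because you let $\Int(g)\theta$ act on the space of \emph{compact forms}, which is $G$ modulo the stabiliser $\{h:h\sigma_c(h)^{-1}\in Z\}\supsetneq U$; fixing a point there recovers $g'$ only up to the center. If instead you let the involution $g\theta\in G\rtimes\langle\theta\rangle$ act on $G/U$ itself---a complete CAT(0) space with the usual symmetric metric---the Cartan fixed-point theorem yields $hU$ with $g\theta(h)U=hU$, i.e.\ $h^{-1}g\theta(h)\in U$ on the nose, and (L1) follows with no residual $z$. For (L2) the same space does the work: the fixed set of $u_1\theta$ in $G/U$ is convex, hence connected, and a tangent-space count shows the centraliser $C=G^{\Int(u_1)\theta}$ acts transitively on it (here you use that $\Int(u_1)\theta$ commutes with $\sigma_c$, so $\mathfrak c=\mathfrak u^{\Int(u_1)\theta}\oplus (i\mathfrak u)^{\Int(u_1)\theta}$). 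This replaces your vague ``relative position'' sentence with an actual proof. Finally, your polar-decomposition paragraph correctly extracts a cocycle $u\in U$ from $g=u\exp X$, but it does not by itself show $g\sim u$; it is the fixed-point argument, not the decomposition, that closes (L1).
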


The interplay between the $\sigma$ and $\theta$ pictures plays a
fundamental role in the structure and representation theory of real
groups, going back at least to Harish Chandra's formulation of the
representation theory of $G(\R)$ in terms of $(\g,K)$-modules.  The
theorem is an aspect of this, and we give several applications.

First we use this to give a simple proof of a well known result about
two versions of the rational Weyl group (Proposition \ref{p:W}).
Similarly in Section \ref{s:cartans} we give a simple proof of a
result of Matsuki: there is a bijection between $G(\R)$-conjugacy
classes of Cartan subgroups of $G(\R)$ and $K$-conjugacy classes of
$\theta$-stable Cartan subgroups of $G$ \cite{matsuki}.

The  fact that $H^1(\Gamma,\Gad)$ parametrizes the set of  real forms of $G$ in a given inner class
reduces the computation of this
cohomology space to the classification of real forms, which  can be accomplished in  a
number of ways. We seek an analogous description
of $H^1(\Gamma,G)$.

A formula for  $H^1(\Gamma,G)$  is due to Borovoi
\cite{borovoi}.  We start with this, and modify it in two
steps. First of all we replace $H^1(\Gamma,G)$ with
$H^1(\Ztwo,G)$. This a more elementary object. For example 
$G(\R)$ is compact if and only if  $\theta=1$, in which case  $H^1(\Ztwo,G)$ is the conjugacy
classes of involutions of $G$. See Example \ref{ex:compact}.  Next, we bring
in the theory of strong real forms \cite{abv}, \cite{algorithms}. The
strong real forms of a group $G$ map surjectively to the real forms
(see Lemma \ref{l:strongreal}), and bijectively if $G$ is adjoint.

Let $Z$ be the center of $G$. Associated to 
$\sigma$ is its central invariant, denoted
$\zinv(\sigma)\in Z^\Gamma/(1+\sigma)Z$.  In addition
there is a notion of central invariant of a strong real form, which is an element of
$Z^\Gamma$. See Section \ref{s:galois} for these definitions.

\begin{theorem}[Proposition \ref{p:H1}]
\label{t:H1}
Suppose $\sigma$ is a real form of $G$.
Choose a representative $z\in
Z^\Gamma$ of   $\zinv(\sigma)\in Z^\Gamma/(1+\sigma)Z$. Then there is a bijection
$$
H^1(\Gamma,G)\overset{1-1}\longleftrightarrow\text{the set of
  strong real forms with central invariant }z
$$
\end{theorem}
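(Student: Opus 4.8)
The plan is to make the bijection explicit by realizing strong real forms inside the extended group and pinning down a base point using the hypothesis on $z$. Recall from Section \ref{s:galois} that a strong real form is an element $x$ in the nonidentity coset of the extended group (the coset mapping to the nontrivial element of $\Gamma$) with $x^2\in Z$; its central invariant is the element $x^2$, which lies in $Z^\Gamma$ because $x^2$ is fixed by $\mathrm{int}(x)$, whose restriction to $Z$ is $\sigma$; and two strong real forms are declared equivalent precisely when they are $G$-conjugate. Thus the right-hand side of the asserted bijection is the set of $G$-conjugacy classes of such $x$ with $x^2=z$ exactly, not merely modulo $(1+\sigma)Z$.

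First I would produce a base point: a strong real form $\delta_0$ with $\delta_0^2=z$ whose induced involution is $\sigma$ itself. Starting from any lift $\delta$ of $\sigma$ to the extended group, its square $\delta^2$ represents $\zinv(\sigma)$ by the definition in Section \ref{s:galois}, so $z\,\delta^{-2}\in(1+\sigma)Z$; writing $z\,\delta^{-2}=c\,\sigma(c)$ and setting $\delta_0=c\delta$ gives $\mathrm{int}(\delta_0)=\mathrm{int}(\delta)=\sigma$ (as $c\in Z$) together with $\delta_0^2=c\,\sigma(c)\,\delta^2=z$. This is exactly the step where the assumption that $z$ represents $\zinv(\sigma)$ is used, and it is the crux of the argument: for a class $z$ not representing $\zinv(\sigma)$ no base point inducing $\sigma$ with square $z$ exists.

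With $\delta_0$ fixed I would define the map $g\mapsto x=g\,\delta_0$ from cocycles to strong real forms and check that it descends to the desired bijection. A direct computation using $\delta_0\,y\,\delta_0^{-1}=\sigma(y)$ gives $x^2=g\,\sigma(g)\,\delta_0^2=g\,\sigma(g)\,z$, so $x^2=z$ if and only if $g\,\sigma(g)=1$, i.e.\ $g$ is a $1$-cocycle for $\Gamma$ acting by $\sigma$. The same identity yields $h\,x\,h^{-1}=\bigl(h\,g\,\sigma(h)^{-1}\bigr)\delta_0$, so $G$-conjugacy of the elements $x$ corresponds exactly to the coboundary relation $g\sim h\,g\,\sigma(h)^{-1}$. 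Hence $g\mapsto g\,\delta_0$ induces a bijection $H^1_\sigma(\Gamma,G)\xrightarrow{\ \sim\ }\{\text{strong real forms with }x^2=z\}/G$, which is the claim. Alternatively, one may first invoke Theorem \ref{t:main} to replace $H^1_\sigma(\Gamma,G)$ by $H^1_\theta(\Ztwo,G)$ and run the identical argument with $\theta$ in place of $\sigma$ and $(1+\theta)Z$ in place of $(1+\sigma)Z$; this is the more natural setting, since strong real forms are defined algebraically.

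Besides the existence of the base point, the remaining points to watch are that the central invariant $x^2$ genuinely lands in $Z^\Gamma$ and that the descriptions of the fixed points and coinvariants of the center ($Z^\Gamma$ and $(1+\sigma)Z$, versus their $\theta$-analogues) are correctly matched; these are bookkeeping facts about the action on $Z$ recorded in Section \ref{s:galois}. One should also note that the bijection depends on the chosen base point $\delta_0$: two admissible choices differ by a central $c$ with $c\,\sigma(c)=1$, so the resulting bijections differ by translation by the class of $c$ in $H^1_\sigma(\Gamma,Z)$ acting on $H^1_\sigma(\Gamma,G)$. Since the statement only asserts that a bijection exists once $z$ is fixed, this ambiguity is harmless, but it is worth flagging so that the construction is not mistaken for a canonical one.
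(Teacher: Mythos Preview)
Your argument is correct and is essentially the twisting argument the paper itself sketches in Section~\ref{s:rigid}: pick a base point $\delta_0$ in the extended group with $\delta_0^2=z$ and $\int(\delta_0)$ equal to the given involution, and observe that $g\mapsto g\delta_0$ carries cocycles to strong involutions with square $z$ and coboundaries to conjugacy. The paper's primary proof in Section~\ref{s:galois}, however, proceeds differently: it first reduces to a fundamental Cartan via Proposition~\ref{p:theta}, obtaining $H^1_{\theta_x}(\Ztwo,G)\simeq H^1_{\theta_x}(\Ztwo,H)/W_i$, and only then twists by $x$ inside $H\delta$ to produce the explicit description $\{y\in H\delta\mid y^2=z\}/W_i$ (Lemma~\ref{l:H1Z2}); the passage from $\theta$ to $\sigma$ is then made via Theorem~\ref{t:main}. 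Your route is shorter and more conceptual; the paper's route has the virtue of yielding a concrete finite set on which one can actually compute, which is what drives the tables in Section~\ref{s:tables}.

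Two small points are worth tightening. First, the paper's Definition~\ref{d:strongreal} builds the extended group from the \emph{holomorphic} quasicompact involution $\thetaqc$, so elements of $G\delta$ act holomorphically and there is literally no lift of $\sigma$ there; your ``Alternatively'' paragraph (working with $\theta$ and invoking Theorem~\ref{t:main}) is therefore not an alternative but the version that matches the paper's definitions, while your primary $\sigma$-phrased argument lives in the Galois-type extended group of \cite{abv} (cf.\ Remark~\ref{rem:equivalent}). Second, the assertion that $\delta^2$ represents $\zinv(\sigma)$ is exactly right, but it depends on the extended group being built from the quasicompact basepoint $\sigmaqc$ (respectively $\thetaqc$): writing $\delta=g\delta_{qc}$ with $\int(g)\circ\sigmaqc=\sigma$ one gets $\delta^2=g\,\sigmaqc(g)$, which is precisely the image of $\sigma$ under the connecting map in \eqref{e:sigmainvariant}. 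Making that explicit removes any ambiguity about which extension you are using.
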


This bijection is useful in both directions. On the one hand it is not
difficult to compute the right hand side, thereby computing
$H^1(\Gamma,G)$.
Over a p-adic field $H^1(\Gamma,G)=1$ if $G$ is simply connected (Kneser's theorem). 
Over $\R$ this is not the case, and we use Theorem \ref{t:main} to compute
$H^1(\Gamma,G)$ for all such groups.
See Section \ref{s:galois} and  the tables in Section
\ref{s:tables}. We used 
the Atlas of Lie Groups and
Representations software for some of these calculations.

On the other hand the notion of strong real form is
important in formulating  a precise version of the local Langlands
conjecture. In that context it would be more natural if 
strong real forms were described in terms of classical Galois cohomology.
The theorem provides such an interpretation. See Corollary
\ref{c:interpret}.

The author would like to thank Michael Rapoport for asking about the
interpretation of strong real forms in terms of Galois cohomology, and
apologizes it took so long to get back to him. 
He is also grateful to
Tasho Kaletha for several helpful discussions during the writing of this paper 
and of \cite{kaletha_rigid}, and Skip Garibaldi for a discussion of 
the Galois cohomology of the spin groups.

\sec{Preliminaries on Group Cohomology}
\label{s:prelim}

See \cite{serre_galois} for an overview of group cohomology.

For now suppose $\tau$ is an involution of an abstract group $G$.
Define $H^i(\Ztwo,G)$ to be the group cohomology space where
the nontrivial element of $\Ztwo$ acts by $\tau$.  
If $G$ is abelian these are groups and are defined for all $i\ge 0$. 
Otherwise these are pointed sets, and defined  only  for $i=0,1$.
We have
the standard identifications
$$
H^0(\Ztwo,G)=G^\tau,\,
H^1(\Ztwo,G)=G^{-\tau}/\{g\rightarrow x g\tau(x\inv)\}
$$
where $G^{-\tau}=\{g\in G\mid g\tau(g)=1\}$.  For $g\in G^{-\tau}$ let $[g]$ be 
the corresponding class in $H^1(\Ztwo,G)$.

If $G$ is abelian we also have the Tate cohomology groups $\wh H^i(\Ztwo,G)$ $(i\in\Z)$.
These satisfy
$$
\wh H^0(\Ztwo,G)=G^\tau/(1+\tau)G,\quad \wh H^1(\Ztwo,G)=H^1(\Ztwo,G),
$$
and (since $\Ztwo$ is cyclic), 
$\wh H^i(\Ztwo,G)\simeq \wh H^{i+2}(\Ztwo,G)$ for all $i$.

Now suppose $G$ is a connected reductive algebraic group. 
We say $G$ is defined over $\R$ if we are given 
a Galois action on $G$. 
This means that the nontrivial element 
of $\Gamma$ acts on $G(\C)$ by an antiholomorphic involution $\sigma$. 
Then we recover  the usual Galois cohomology 
$H^i(\Gamma,G)$.
(We identify $G$ with its complex points $G(\C)$, but sometimes 
write $G(\C)$ for emphasis.)
Then 
$
H^0(\Gamma,G)=G(\R)$, 
and if $G$ is abelian then 
$\wh H^0(\Gamma,G)=\pi_0(G(\R))$.

Let $G$ act on antiholomorphic involutions by conjugation:
$g:\sigma\rightarrow\int(g)\circ\sigma\circ\int(g\inv)$ where 
$\int(g)(x)=gxg\inv\quad (x\in G)$.

\begin{definition}
\label{d:realform}
A real form of $G$ is a $G$-conjugacy class of antiholomorphic
involutions of $G$. 
\end{definition}
See \cite{abv}.

\begin{remarkplain}
\label{r:notserre}
The standard definition of  of real forms, for example see
\cite[III.1]{serre_galois}, allows conjugation by an arbitrary
(holomorphic) automorphism of $G$, not just the inner automorphisms $\int(g)$.

For example suppose $G=SL(2,\C)\times SL(2,\C)$. This has four real
forms according to our definition: $\text{split}, \text{compact}$,
$\text{split}\times\text{compact}$ and
$\text{compact}\times\text{split}$.  If one allows conjugation by
outer automorphisms there are only three real forms, since 
the last two are the same. 

This distinction is crucial in our applications; see Example
\ref{ex:realforms} and Remark \ref{r:notserre2}.
For simple groups these two notions agree except in type $D_{2n}$. 
See \cite[Section 3]{algorithms},  \cite[Example 3.3]{snowbird} and 
Section \ref{s:adjoint}.

\end{remarkplain}

Now suppose  $\theta$ is a
holomorphic involution of $G=G(\C)$,
and let $K=K(\C)=G^\theta$.
Then 
$H^0(\Ztwo,G)=K$, and if $G$ is abelian 
$\wh H^0(\Ztwo,G)=\pi_0(K)$.

Since we will be using cohomology defined with respect to both holomorphic and anti-holomorphic 
involutions we write
$H^i(\Gamma,G)$ for Galois cohomology, and 
$H^i(\Ztwo,G)$ for  cohomology with respect to a holomorphic involution.
When there are multiple involutions being considered we will specify
these by writing $H^i_\sigma(\Gamma,G)$ or 
$H^i_\theta(\Ztwo,G)$.

Fix real form $\sigma$, and a holomorphic
involution $\theta$. 
We say $\theta$ corresponds to $\sigma$ if $\theta$ is a Cartan involution
of $G(\R)=G(\C)^\sigma$. In other words: $\sigma$ and $\theta$
commute, and
$G(\R)^\theta$ is a maximal compact subgroup of $G(\R)$. Equivalently
$\theta=\sigma\sigma_c$  where $\sigma,\sigma_c$ commute, and
$G(\C)^{\sigma_c}$ is compact.
See Example \ref{ex:realforms} for the relationship with the classification of real forms. 

\begin{exampleplain}
\label{ex:compact}
The group $G(\R)=G^\sigma$ is compact if and only if $\theta=1$. 
Then $H^1_\theta(\Ztwo,G)=\{g\in G\mid g^2=1\}/\{g\rightarrow xgx\inv\}$, i.e. 
$H^1_\theta(\Ztwo,G)$ is 
the set of conjugacy classes of involutions of $G$.
Therefore, if we  fix  a Cartan subgroup $H$,
with Weyl group $W$, then
$$
H^1_\theta(\Ztwo,G)\simeq H_2/W
$$
where $H_2=\{h\in H\mid h^2=1\}$.
See Example \ref{ex:serre1}.
\end{exampleplain}

\subsec{Twisting}
\label{s:twisting}
We make repeated use of twisting in nonabelian cohomology 
\cite[Section III.4.5]{serre_galois}. We describe how this works in our situation. 
Return to the setting of an abstract group $G$ together with an involution $\tau$, 
and consider $H^1_\tau(\Ztwo,G)$.

Suppose $g\in G$, and  $g\tau(g)$ is in the center $Z=Z(G)$ of $G$. 
Then $\tau'=\int(g)\circ\tau$ is an involution, and 
$H^i_{\tau'}(\Ztwo,G)$  is defined.
This  is not necessarily isomorphic to $H^i_\tau(\Ztwo,G)$, unless 
$g\in G^{-\tau}$, in which case the map $x\rightarrow gx$ induces an isomorphism
$H^1_\tau(\Ztwo,G)\simeq H_{\tau'}^1(\Ztwo,G)$.

\sec{Cohomology of Tori}
\label{s:torus}
Suppose $H$ is a torus,  and $\sigma$ is a   real form of $H$. 
Let $\theta$ be the Cartan involution corresponding to $\sigma$:
$\theta=\sigma\sigma_c$ where 
$\sigma_c$ is the compact real form of $H$ ($\sigma_c$ and 
$\theta$ are unique). Then $H(\R)^\theta$ is the maximal compact subgroup of $H(\R)$.

\begin{proposition}
\label{p:torus}
For all $i$ there is a  canonical isomorphism   $\wh H^i_\sigma(\Gamma,H)\simeq \wh H^i_\theta(\Ztwo,H)$.  
\end{proposition}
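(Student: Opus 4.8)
The plan is to reduce both sides to the kernel lattice of the exponential map, on which the two involutions turn out to induce \emph{the same} action. Write $\h=\mathrm{Lie}(H)$ and let $\exp\colon\h\to H$ be the exponential map, giving a short exact sequence
$$
0\to\Lambda\to\h\overset{\exp}\longrightarrow H\to 1,\qquad \Lambda=\ker(\exp),
$$
where $\Lambda$ is a lattice (the natural $2\pi i$-twist of the cocharacter lattice $X_*(H)$). Both $\sigma$ and $\theta$ preserve $\h$ and $\Lambda$ and act equivariantly, so this is short exact in both the $\sigma$- and the $\theta$-equivariant categories of $\Ztwo$-modules.

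First I would dispose of $\h$. As an $\R$-vector space $\h$ is uniquely divisible, and both $\sigma$ and $\theta$ act $\R$-linearly on it. Since Tate cohomology of the finite group $\Ztwo$ is annihilated by $2$, while multiplication by $2$ is invertible on a $\Q$-vector space, we get $\wh H^i_\sigma(\Gamma,\h)=\wh H^i_\theta(\Ztwo,\h)=0$ for all $i$. Hence in both settings the boundary map in the Tate long exact sequence is an isomorphism:
$$
\wh H^i_\sigma(\Gamma,H)\overset\sim\longrightarrow\wh H^{i+1}_\sigma(\Gamma,\Lambda),\qquad
\wh H^i_\theta(\Ztwo,H)\overset\sim\longrightarrow\wh H^{i+1}_\theta(\Ztwo,\Lambda).
$$

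The crux is to compare the two actions on $\Lambda$. Writing $\sigma=s\otimes c$ on $H=X_*(H)\otimes\C^\times$ (with $s$ an involution of $X_*(H)$ and $c$ complex conjugation), one has $\sigma_c=(-1)\otimes c$ and therefore $\theta=\sigma\sigma_c=(-s)\otimes1$, holomorphic. The key observation is that $\sigma_c$ acts \emph{trivially} on $\Lambda$: the compact form lifts to $w\mapsto-\bar w$ on $\h$, which fixes the $2\pi i$-lattice pointwise. Consequently $\theta|_\Lambda=\sigma|_\Lambda\circ\sigma_c|_\Lambda=\sigma|_\Lambda$; that is, $\sigma$ and $\theta$ induce the \emph{same} involution (namely $-s$) on $\Lambda$. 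This is exactly where the conjugation sign built into the antiholomorphic $\sigma$ cancels against the absence of such a sign in the holomorphic $\theta$, and it is the one step I would verify with care.

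Given this, $\wh H^{i+1}_\sigma(\Gamma,\Lambda)$ and $\wh H^{i+1}_\theta(\Ztwo,\Lambda)$ are the Tate cohomology of $\Ztwo$ acting on one and the same module via one and the same involution, hence are literally equal. Composing the two boundary isomorphisms with this identification yields the desired canonical isomorphism $\wh H^i_\sigma(\Gamma,H)\simeq\wh H^i_\theta(\Ztwo,H)$; canonicity is automatic, since every arrow in the chain (the exponential sequence, its boundary maps, and the identity on $\Lambda$) is canonical. The only genuine obstacle is the equality of the two actions on $\Lambda$; once that is in hand the remainder is formal.
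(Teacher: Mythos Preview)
Your proof is correct and takes a genuinely different route from the paper's. The paper uses the squaring exact sequence $1\to H_2\to H\overset{z^2}\to H\to 1$, obtains short exact sequences relating $\wh H^0(H)$, $\wh H^1(H_2)$, and $\wh H^1(H)$ for both $\sigma$ and $\theta$, and then invokes the fact that $\sigma$ and $\theta$ agree on the $2$-torsion $H_2$ (since $\sigma_c$ fixes all finite-order elements). However, to pass from the isomorphism on $\wh H^1(H_2)$ to one on $\wh H^1(H)$, the paper must verify that the images of $\wh H^0(H)$ match up, and it does this by an explicit coordinate computation using a decomposition $H(\R)\simeq S^{1a}\times\R^{*b}\times\C^{*c}$.

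Your argument via the exponential sequence $0\to\Lambda\to\h\to H\to 1$ is cleaner on exactly this point. Because $\wh H^*(\Ztwo,\h)$ vanishes outright, the boundary map is an isomorphism in every degree, and the identification $\sigma|_\Lambda=\theta|_\Lambda$ (your key observation, analogous to the paper's $\sigma|_{H_2}=\theta|_{H_2}$) immediately gives the result with no residual diagram chase or coordinate check. Both proofs rest on the same underlying phenomenon---$\sigma_c$ acts trivially on a suitable ``integral'' piece---but your choice of $\Lambda$ rather than $H_2$ lets the vanishing of $\wh H^*(\h)$ do all the bookkeeping that the paper handles by hand. The trade-off is that the paper's approach stays entirely inside the group and never passes to the Lie algebra, which some readers may find more concrete.
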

We only need to consider $i=0,1$.

\begin{remarkplain}
From the structure of real tori it is easy to see there are
isomorphisms as indicated, although not necessarily canonical ones. It is well known that 
$H(\R)\simeq S^{1a}\times \R^{*b}\times \C^{*c}$,
in which  case $K=K(\C)=\C^{*(a+c)}\times \Ztwo^b$. 

Then $\wh H_\sigma^0(\Gamma,H)=\pi_0(H(\R))\simeq \Ztwo^b$, 
and $\wh H^0_\theta(\Ztwo,H)=\pi_0(K)=\Ztwo^b$.

Furthermore it is easy to see  $H^1(\Gamma,H)$ and $H^1(\Ztwo,H)$ are
trivial if 
if $H(\R)=\R^*$ or $\C^*$  (for $\Gamma$ this is Hilbert's theorem
90), or $\Ztwo$ if $H(\R)=S^1$. 
Therefore $H^1_\sigma(\Gamma,H)\simeq H^1_\theta(\Ztwo,H)\simeq \Ztwo^a$. 
\end{remarkplain}

\begin{proof}
Consider the exact sequence
$$
1\rightarrow
H_2\rightarrow  
H\overset{z^2}\rightarrow  
H\rightarrow 1
$$
This gives
the long exact sequence in Tate cohomology:
$$
\wh H_\sigma^0(\Gamma,H)\overset\alpha\rightarrow\wh H_\sigma^0(\Gamma,H)\rightarrow
\wh H_\sigma^1(\Gamma,H_2)\rightarrow \wh H_\sigma^1(\Gamma,H)\overset\beta\rightarrow \wh H_\sigma^1(\Gamma,H)
$$
It is easy to see $\alpha,\beta$, which are induced by the square map, 
are trivial: $\sigma(t)=t$ implies $t^2=t\sigma(t)$, 
and $t\sigma(t)=1$ implies $t^2=(t\inv)\inv\sigma(t\inv)$.
So we have an exact sequence
$$
1\rightarrow \wh H_\sigma^0(\Gamma,H)
\rightarrow\wh H^1_\sigma(\Gamma,H_2)
\rightarrow \wh H^1_\sigma(\Gamma,H)
\rightarrow 1
$$
The analogous sequence holds for $\theta$:
$$
1\rightarrow \wh H^0_\theta(\Ztwo,H)
\rightarrow \wh H_\theta(\Ztwo,H_2)
\rightarrow \wh H_\theta(\Ztwo,H)
\rightarrow 1
$$
Let $H_c(\R)=H^{\sigma_c}$ be the  compact real form of $H$.
This is the closure of the subgroup of $H(\C)$ of all elements 
of finite order, so $H_2\subset H_c(\R)$. 
Since $\theta=\sigma\sigma_c$, $\theta$ and $\sigma$ agree on $H_2$,
so the identity map on $H_2$ induces an isomorphism $\phi:\wh H^1_\sigma(\Gamma,H_2)\simeq \wh H^1_\theta(\Ztwo,H_2)$.

It is well known, and easy to compute directly, that $\wh
H^0_\sigma(\Gamma,H)=\pi_0(H(\R))$ is  isomorphic to $\wh
H^0_\theta(\Ztwo,H)=\pi_0(H^\theta)$. An explicit calculation using 
any isomorphism
$H(\R)\simeq S^{1a}\times \R^{*b}\times\C^{*c}$ shows that the images 
of $\wh H^0_\sigma(\Gamma,H)$ in $H^1_\sigma(\Gamma,H_2)$ 
and  $\wh H^0_\theta(\Ztwo,H)$ in $H^1_\theta(\Ztwo,H_2)$ agree under the 
the isomorphism $\phi$.
In other words the left hand box of 
$$
\xymatrix{
&1\ar[r]&\wh H_\sigma^0(\Gamma,H)\ar[d]_\simeq\ar[r]&H^1_\sigma(\Gamma,
H_2)\ar[d]^{\phi}\ar[r]&H^1_\sigma(\Gamma,H)\ar[r]\ar[d]&1\\
&1\ar[r]&\wh H^0_\theta(\Ztwo,H)\ar[r]&H^1_\theta(\Ztwo,
H_2)\ar[r]&H^1_\theta(\Ztwo,H)\ar[r]&1
}
$$
commutes. It follows that there is a unique isomorphism
$H^1_\sigma(\Gamma,H)\simeq H^1_\theta(\Ztwo,H)$ making the entire diagram commute.
\end{proof}

\sec{Cohomology of reductive groups}
\label{s:G}

Now suppose $G=G(\C)$ is a connected reductive group, defined over
$\R$, with real form $\sigma$, and corresponding Cartan involution
$\theta$ (see Section \ref{s:prelim}).

If $H$ is a Cartan subgroup of $G$ let $N=\Norm_G(H)$, and let $W=N/H$.
If $H$ is $\sigma$-stable then $\sigma$ acts on $N$ and $W$.
The short exact sequence $1\rightarrow H\rightarrow N\rightarrow W\rightarrow 1$ gives rise to 
the exact cohomology sequence
\begin{equation}
\label{e:longexact}
H^0(\Gamma,H)\negthinspace\rightarrow\negthinspace
H^0(\Gamma,N)\negthinspace\rightarrow\negthinspace
H^0(\Gamma,W)\negthinspace\rightarrow\negthinspace
H^1(\Gamma,H)\negthinspace\rightarrow\negthinspace
H^1(\Gamma,N)\negthinspace\rightarrow\negthinspace
H^1(\Gamma,W)
\end{equation}
The 
third map takes $H^0(\Gamma,W)=W^\sigma$ to a subgroup of $H^1(\Gamma,H)$, and 
thereby acts by conjugation.

\begin{lemma}
\label{l:H1N}
Suppose $w\in W^\sigma$ and  $h\in H^{-\sigma}$. 
Choose $n\in N$ mapping to $w$.
Then the action of $w$ on $H^1(\Gamma,H)$ is 
$w:[h]\rightarrow [nh\sigma(n\inv)]$; this is well defined, independent of the choices involved.

The image of $H^1(\Gamma,H)$ in $H^1(\Gamma,N)$ is isomorphic to  $H^1(\Gamma,H)/W^\sigma$.
\end{lemma}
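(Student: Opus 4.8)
The plan is to prove both assertions by direct cocycle manipulation, using throughout the description $H^1_\sigma(\Gamma,H)=H^{-\sigma}/\{h\to xh\sigma(x\inv)\mid x\in H\}$ and the analogous one for $N$, together with the exact sequence (\ref{e:longexact}). The organizing remark is that the action in question is the one whose orbits are the fibers of the map $\pi_*\colon H^1(\Gamma,H)\to H^1(\Gamma,N)$ induced by $H\hookrightarrow N$; establishing the explicit formula and computing the orbit space are then two facets of one calculation.

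For the first assertion I would first check that the formula makes sense and is independent of all choices. Since $H$ is $\sigma$-stable, $\sigma$ preserves $N$ and $\pi$ is $\sigma$-equivariant, so for $w\in W^\sigma$ any lift $n$ satisfies $n\sigma(n)\inv\in H$; as $n$ normalizes $H$, the element $nh\sigma(n)\inv=(nhn\inv)(n\sigma(n)\inv)$ lies in $H$, and a one-line computation using $\sigma^2=1$ and $h\sigma(h)=1$ shows it lies in $H^{-\sigma}$. Independence of the representative $h$ comes from conjugating a coboundary across $n$: replacing $h$ by $xh\sigma(x)\inv$ and setting $y=nxn\inv\in H$, the result differs from $nh\sigma(n)\inv$ by the coboundary $y\sigma(y)\inv$ (here I use that $H$ is abelian). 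Independence of the lift follows since a second lift $nh_0$ changes $h$ only by the coboundary $h_0\sigma(h_0)\inv$, which by the previous step does not affect the class. The relation $\Phi_{w_1}\circ\Phi_{w_2}=\Phi_{w_1w_2}$ is immediate from $\sigma(n_1n_2)=\sigma(n_1)\sigma(n_2)$, so $w\mapsto\Phi_w$ is a left action of $W^\sigma$. I would also note that $\Phi_w$ is not a homomorphism of the group $H^1(\Gamma,H)$---indeed $\Phi_w([1])=[n\sigma(n)\inv]$ is the connecting-map value $\delta(w)$, reflecting that $H$ is normal but not central in $N$---so the orbit space below must be read as a bijection of pointed sets.

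For the second assertion I would compute the fibers of $\pi_*$ outright. Classes $[h_1],[h_2]$ coincide in $H^1(\Gamma,N)$ precisely when $h_2=mh_1\sigma(m)\inv$ for some $m\in N$. Applying $\pi$ and using $h_1,h_2\in\ker\pi$ forces $w\sigma(w)\inv=1$, where $w=\pi(m)$, i.e. $w\in W^\sigma$; writing $m=nh_0$ and absorbing $h_0$ as in the independence-of-lift step then gives $[h_2]=\Phi_w([h_1])$. Conversely each $W^\sigma$-orbit clearly lands on a single class of $H^1(\Gamma,N)$, since an $H$-twist by $n\in N$ is in particular an $N$-coboundary. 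Hence the fibers of $\pi_*$ are exactly the $W^\sigma$-orbits, and the image of $H^1(\Gamma,H)$ in $H^1(\Gamma,N)$ is in bijection with $H^1(\Gamma,H)/W^\sigma$.

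The step I expect to carry the real content is the passage $h_2=mh_1\sigma(m)\inv\Rightarrow\pi(m)\in W^\sigma$: this is what pins the quotient to $W^\sigma$ rather than all of $W$, and it is the only place where the hypothesis $w\in W^\sigma$ is genuinely produced rather than assumed. The remaining difficulty is purely bookkeeping---keeping the $\sigma$-twist in $nh\sigma(n)\inv$ (as opposed to ordinary conjugation $nhn\inv$) and systematically using that $H$ is abelian and $N$-normal to slide coboundaries past lifts.
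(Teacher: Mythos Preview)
Your proposal is correct and is precisely the standard cocycle argument that the paper invokes by citing \cite[I.5.5, Corollary~1]{serre_galois}; the paper itself gives no further proof beyond ``This is immediate.'' Your explicit verification that $\pi(m)\in W^\sigma$ is forced by $h_1,h_2\in\ker\pi$, and the subsequent identification of the fibers of $\pi_*$ with the $W^\sigma$-orbits, is exactly the content of that reference specialized to the exact sequence $1\to H\to N\to W\to 1$.
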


This is immediate. See \cite[I.5.5, Corollary 1]{serre_galois}.

We say a root $\alpha$ of $H$ in $G$ is imaginary, real, or complex if 
$\sigma(\alpha)=-\alpha$, $\sigma(\alpha)=\alpha$, or
$\sigma(\alpha)\ne\pm\alpha$, respectively. 
Let $W_i\subset W^\sigma$ be the Weyl group of the root system of imaginary roots.

\begin{lemma}
\label{l:Wi}
$H^1(\Gamma,H)/W^\sigma= H^1(\Gamma,H)/W_i$.
\end{lemma}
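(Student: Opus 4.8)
Since $W_i\subseteq W^\sigma$, the $W_i$-orbits on $H^1(\Gamma,H)$ refine the $W^\sigma$-orbits, so the natural map $H^1(\Gamma,H)/W_i\to H^1(\Gamma,H)/W^\sigma$ is surjective. The plan is to prove it is injective, i.e.\ that every $W^\sigma$-orbit is already a single $W_i$-orbit. First I would record the $W^\sigma$-action in a convenient form. Given $w\in W^\sigma$ choose $n\in N$ over $w$; since $H$ is abelian the formula of Lemma \ref{l:H1N} becomes $w\cdot[h]=[w(h)]\,[n\sigma(n)\inv]$, where $w(h)=\int(n)(h)$ and $[n\sigma(n)\inv]$ is the class of $w$ applied to the base point $[1]$. (One checks, using $w\in W^\sigma$, that $h\mapsto[w(h)]$ is well defined on $H^1(\Gamma,H)=H^{-\sigma}/(1-\sigma)H$, where $(1-\sigma)H=\{x\sigma(x)\inv\mid x\in H\}$.) Thus the action is affine, and I let $W_0\subseteq W^\sigma$ be the normal subgroup of elements acting trivially on all of $H^1(\Gamma,H)$. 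It suffices to prove $W^\sigma=W_i\cdot W_0$: then for any class $c$ and any $w=w_iw_0$ one has $w\cdot c=w_i\cdot c$, whence $W^\sigma\cdot c=W_i\cdot c$.

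The key computational input is that real and complex root reflections lie in $W_0$. For a real root $\alpha$ (so $\sigma\alpha=\alpha$) the root $SL(2)$ is split, so I can choose a representative $n_\alpha\in N\cap G(\R)$, i.e.\ $\sigma(n_\alpha)=n_\alpha$; then the base-point term $[n_\alpha\sigma(n_\alpha)\inv]$ is trivial. For the linear term, if $h\in H^{-\sigma}$ then $\sigma(h)=h\inv$ and $s_\alpha(h)h\inv=\alpha^\vee(\alpha(h)\inv)$; since $\alpha$ is real one has $\sigma(\alpha(h))=\overline{\alpha(h)}=\alpha(h)\inv$, and a short computation writes $\alpha^\vee(\alpha(h)\inv)$ as $(1-\sigma)\alpha^\vee(u)$ for a suitable $u$, so $s_\alpha\cdot[h]=[h]$. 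For a complex root $\alpha$ with $\beta=\sigma\alpha$ orthogonal to $\alpha$, the element $s_\alpha s_\beta$ lies in $W^\sigma$; choosing $n_\beta=\sigma(n_\alpha)$ makes the base-point term vanish, and the identities $\beta^\vee=\sigma(\alpha^\vee)$ and $\beta(h)=\overline{\alpha(h)}\inv$ identify the linear term with an element of $(1-\sigma)H$, exactly as in the real case. Hence all of these reflections act trivially.

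It remains to show that $W^\sigma$ is generated by $W_i$ together with elements of $W_0$. Let $W_r$ be the Weyl group of the real roots; the previous paragraph gives $W_r\subseteq W_0$ and places the elements associated to orthogonal complex pairs in $W_0$ as well. The main obstacle is the structure theorem asserting that $W^\sigma$ is generated by $W_i$, $W_r$, and the contributions of the complex roots; granting this, since $W_0$ is normal the product $W_i\cdot W_0$ is a subgroup containing a generating set, hence equals $W^\sigma$, completing the proof. I expect this generation statement, and in particular the bookkeeping for complex pairs $\{\alpha,\sigma\alpha\}$ with $\langle\alpha,\beta^\vee\rangle\neq 0$, to be the hard part; I would handle it through the classification of roots into real, imaginary and complex and by Cayley transforms reducing complex roots to the orthogonal case, referring to the standard structure theory of real Weyl groups (equivalently, passing to the $\theta$-picture via $W^\sigma=W^\theta$, where $W_i$ is realized as the Weyl group of $K=G^\theta$).
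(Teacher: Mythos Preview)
Your proposal is correct and follows essentially the same approach as the paper: both reduce to the semidirect product decomposition $W^\sigma=(W_C)^\sigma\ltimes[W_i\times W_r]$ (which the paper cites from \cite[Proposition~4.16]{ic4}) together with the fact that $W_r$ and $(W_C)^\sigma$ act trivially on $H^1(\Gamma,H)$ (the paper cites \cite[Proposition~12.16]{algorithms} for the latter). Your explicit computations for real reflections and for the products $s_\alpha s_{\sigma\alpha}$ with $\alpha,\sigma\alpha$ orthogonal are exactly what underlies those citations, and the ``hard part'' you flag---the structure theorem for $W^\sigma$---is precisely the input the paper imports rather than proves.
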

\begin{proof}
Write $W^\sigma=(W_C)^\sigma\ltimes[W_i\times W_r]$   
as in \cite[Proposition 4.16]{ic4}. Here $W_r$ is Weyl group of the real
roots, and 
$(W_C)^\sigma$ is a certain Weyl group, generated by terms of the form
$s_\alpha s_{\sigma\alpha}$ where $\alpha,\sigma\alpha$ are
orthogonal. 
It is easy to see that $W_r$ acts trivially on $H^1(\Gamma,H)$, 
and $(W_C)^\sigma$ does as well \cite[Proposition 12.16]{algorithms}.
\end{proof}

\begin{remarkplain}
\label{r:Wi}
Note that $H^1_\sigma(\Gamma,H)$ only depends on the restriction
of $\sigma$ to $H$, as does the notion of imaginary root. 
However this is not the case for  the action of $W_i$ on $H^1_{\sigma}(\Gamma,H)$
of Lemma \ref{l:H1N}, which is sensitive to the restriction of $\sigma$ to $N$.
\end{remarkplain}

We say a Cartan subgroup $H$ is  fundamental  if it is of minimal 
split rank.  Such a Cartan subgroup is the centralizer of a Cartan subgroup of the identity component of $K$.

\begin{proposition}
\label{p:sigma}
The  map
$H^1_\sigma(\Gamma,H)\rightarrow H^1_\sigma(\Gamma,G)$ factors through
the quotient by $W_i$, and  induces  an injection
\begin{equation}
\label{e:sigma}
\phi:H^1_\sigma(\Gamma,H)/W_i\hookrightarrow H^1_\sigma(\Gamma,G).
\end{equation}
If $H$ is fundamental this is an isomorphism.
\end{proposition}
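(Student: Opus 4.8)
The plan is to route everything through the normalizer $N=\Norm_G(H)$ and the exact sequence \eqref{e:longexact}, reducing the three assertions (well-definedness on the $W_i$-quotient, injectivity, and surjectivity in the fundamental case) to two facts about $N$. Well-definedness is immediate: by Lemma \ref{l:H1N} the action of $w\in W^\sigma$ sends $[h]$ to $[nh\sigma(n\inv)]$ for a lift $n\in N$ of $w$, and $nh\sigma(n\inv)$ is cohomologous to $h$ already in $G$ (it is the coboundary of $h$ by $n\in G$); hence $H^1_\sigma(\Gamma,H)\to H^1_\sigma(\Gamma,G)$ is constant on $W^\sigma$-orbits and so, by Lemma \ref{l:Wi}, factors through $H^1_\sigma(\Gamma,H)/W_i$. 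Moreover Lemmas \ref{l:H1N} and \ref{l:Wi} identify $H^1_\sigma(\Gamma,H)/W_i$ with the image of $H^1_\sigma(\Gamma,H)$ in $H^1_\sigma(\Gamma,N)$. Thus $\phi$ is precisely this image mapped into $H^1_\sigma(\Gamma,G)$, and it remains to show: (i) $H^1_\sigma(\Gamma,N)\to H^1_\sigma(\Gamma,G)$ is injective on that image, and (ii) if $H$ is fundamental the composite $H^1_\sigma(\Gamma,H)\to H^1_\sigma(\Gamma,G)$ is onto.

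For injectivity I would show that two cocycles $h_1,h_2\in H^{-\sigma}$ cohomologous in $G$ are already cohomologous in $N$. Twisting by $h_1$ (Section \ref{s:twisting}) replaces $\sigma$ by $\tau=\int(h_1)\circ\sigma$; since $\int(h_1)$ is trivial on the abelian group $H$, we have $\tau|_H=\sigma|_H$, so the imaginary roots and $W_i$ are unchanged, while the pair $([h_1],[h_2])$ is carried to $([1],[c])$ with $c\in H^{-\tau}$ a class dying in $H^1_\tau(\Gamma,G)$. Hence $c=x\inv\tau(x)$ for some $x\in G$, and a short computation using $c\in H$ shows $xHx\inv$ is again $\tau$-stable. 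The key step is then to prove that the two $G$-conjugate, $\tau$-stable maximal tori $H$ and $xHx\inv$ are conjugate under the real group $G^\tau$; granting this, writing the real conjugator as $xn$ with $n\in N$ yields $n\tau(n)\inv=c$ with image of $n$ in $W^\sigma$, so by Lemma \ref{l:H1N} the class $[c]$ lies in the $W_i$-orbit of the base point, and untwisting places $[h_1]$ and $[h_2]$ in a single $W_i$-orbit. I expect this $G^\tau$-conjugacy statement to be the main obstacle: it is exactly the assertion that the defect between stable and rational conjugacy of $\tau$-stable Cartan subgroups is governed by $W_i$, which I would extract from the structure theory of $\tau$-stable Cartan subgroups, using that the real roots and the factor $(W_C)^\sigma$ act trivially on $H^1$ (Lemma \ref{l:Wi}).

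For surjectivity when $H$ is fundamental, I would first observe that every class in $H^1_\sigma(\Gamma,G)$ is represented in $N$: a cocycle $g$ defines the inner twist $\sigma'=\int(g)\circ\sigma$, which possesses a fundamental, hence $\sigma'$-stable, maximal torus, and since all maximal tori of $G$ are conjugate, conjugating this torus onto $H$ replaces $g$ by a cohomologous cocycle valued in $N$. By \eqref{e:longexact} the obstruction to pushing this cocycle into $H$ is its image in $H^1_\sigma(\Gamma,W)$, which is the class of the Weyl element $w$ by which the conjugated real structure $\int(w)\circ\sigma$ differs from $\sigma$ on $H$. The remaining point, and the surjectivity counterpart of the obstacle above, is that this obstruction vanishes: a fundamental Cartan subgroup has no real roots, so the two maximally compact real structures $\sigma$ and $\int(w)\circ\sigma$ on $H$ force $w$ to be a coboundary in $H^1_\sigma(\Gamma,W)$, whence the cocycle may be taken in $H$. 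Combined with (i), this gives the asserted isomorphism.
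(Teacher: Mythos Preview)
Your injectivity plan coincides with the paper's. The twist by $h_1$ and the reduction to ``the $\tau$-stable tori $H$ and $xHx^{-1}$ are $G^\tau$-conjugate'' is exactly what the paper does (with $\sigma'=\int(h)\circ\sigma$ in its notation). That conjugacy statement is Lemma~\ref{l:stablecartans}: stably conjugate $\sigma$-stable Cartan subgroups are already $G(\R)$-conjugate. The paper proves it by first establishing the analogue for $\sigma$-stable Levi factors of $\sigma$-stable parabolics (Lemma~\ref{l:reallevis}, via surjectivity of $G(\R)\to(G/P)(\R)$ from Borel--Tits), and then reducing the Cartan case to conjugacy of compact Cartan subgroups inside such a Levi. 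So the obstacle you flag is precisely what the paper isolates and settles.

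Your surjectivity plan is genuinely different, and this is where there is a gap. The paper never lifts a general cocycle to $N$; instead it takes $g\in G^{-\sigma}$, uses the Jordan decomposition to reduce to $g$ semisimple, observes that $g$ commutes with $\sigma(g)$ and so lies in some $\sigma$-stable maximal torus $H'=T'A'$, strips off the $A'$-part via $H^1(\Gamma,A')=1$, and finally conjugates the remaining compact torus $T'$ into the fixed fundamental $H$. This is elementary and self-contained. Your route through $H^1_\sigma(\Gamma,W)$ is plausible, but the clause ``a fundamental Cartan subgroup has no real roots, so the two maximally compact real structures \dots\ force $w$ to be a coboundary'' is not yet a proof. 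What you actually need is that any two involutions $\sigma|_H$ and $w\sigma|_H$ in the coset $W\cdot\sigma|_H$ having no real roots are $W$-conjugate---equivalently, that the fundamental Cartan type is unique across the whole inner class, not merely for a single real form. This is true, but it is essentially the parametrization of Cartan types by $W$-conjugacy classes of twisted involutions, an input of depth comparable to the proposition itself (even in the equal-rank case it amounts to the nontrivial fact that every nonidentity involution in $W$ negates some root). The paper's Jordan-decomposition argument sidesteps this issue entirely.
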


Injectivity amounts to the  fact that for $\sigma$-stable
Cartan subgroups
of $G$,  conjugacy is equivalent to stable conjugacy
\cite[Corollary 2.3]{shelstad_innerforms} (see below).
This in turn 
follows from the analogous statement for parabolic subgroups (true over any
field), 
and $G(\R)$-conjugacy  of compact Cartan subgroups. 
Surjectivity for a fundamental Cartan subgroup $H$ is
in \cite[Lemma 10.2]{kottwitzStable}, For the fundamental Cartan
subgroup injectivity and surjectivity are proved in \cite{borovoi} (with $W^\sigma$ in place of $W_i$). 
We give complete proofs, for the convenience of the reader,
and because we need to repeat the arguments in the setting
of $H^1(\Ztwo,G)$.

\begin{lemma}
\label{l:reallevis}
Suppose $P,P'$ are $\sigma$-stable parabolic subgroups of $G$, 
with $\sigma$-stable Levi factors $M,M'$. 
If $M,M'$ are conjugate then they are $G(\R)$-conjugate.
\end{lemma}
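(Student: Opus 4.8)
The plan is to reduce the statement to the well-known fact that two parabolic subgroups with a common Levi factor are conjugate by a real point precisely when their Levi factors are, using the defining property of real forms via $G(\R)$. Since $M,M'$ are conjugate in $G$, say $M'=\int(g)M$ for some $g\in G$, the first step is to arrange that $g$ can be chosen to conjugate $P$ to a parabolic with Levi $M'$, and then to descend to $G(\R)$.

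\medskip

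\noindent\textbf{Approach.} First I would use that $M$ is a Levi factor of $P$, so $M$ determines $P$ up to the choice of unipotent radical; the conjugacy $M'=\int(g)M$ gives a parabolic $\int(g)P$ with Levi factor $M'$. Both $\int(g)P$ and $P'$ are parabolic subgroups containing $M'$ as a Levi factor, hence both contain the common minimal parabolic data of $M'$, and so they are conjugate by an element of $M'$ (two parabolics of $G$ with the same Levi factor are conjugate under the normalizer of that Levi). Absorbing this element, I may assume $\int(g)P=P'$ outright, so that $g$ normalizes the pair and in particular $g^{-1}\sigma(g)\in\Norm_G(P)=P$ after applying $\sigma$ (using that $P,P'$ are $\sigma$-stable). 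The key point is then that $g^{-1}\sigma(g)$ lands in a group over which $H^1(\Gamma,-)$ vanishes, so the cocycle $g^{-1}\sigma(g)$ is a coboundary and $g$ can be modified to a real point.

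\medskip

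\noindent\textbf{Key steps in order.} (1) Replace $g$ so that $\int(g)P=P'$, reducing to the case where $g$ conjugates both the parabolic and its Levi. (2) Apply $\sigma$: since $P$, $P'$, $M$, $M'$ are all $\sigma$-stable, the element $c=g^{-1}\sigma(g)$ normalizes $P$ and $M$, hence $c\in M$ (the Levi is its own normalizer inside $P$ up to the unipotent radical, and $c$ must also normalize $M$). (3) Check $c$ is a $\sigma$-cocycle valued in $M$; then invoke vanishing of the relevant $H^1$. The cleanest route is to push $c$ into the unipotent radical or to observe that after the reductions $c$ lies in a subgroup where the cocycle is trivial, using that the obstruction to $G(\R)$-conjugacy of two $G$-conjugate objects lives in the kernel of $H^1(\Gamma,M)\to H^1(\Gamma,G)$; but here the point is that the stabilizer of the configuration is $M'$ itself, and the relevant cohomology of the unipotent radical vanishes, so the stable conjugacy collapses to genuine conjugacy over $\R$. (4) Conclude that $g$ can be adjusted by an element of $M'$ to a point of $G(\R)$, giving the $G(\R)$-conjugacy of $M$ and $M'$.

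\medskip

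\noindent\textbf{Main obstacle.} The delicate point is Step (3): ensuring that the cocycle $c=g^{-1}\sigma(g)$, which a priori only lies in $\Norm_G(P')$, actually lies in a subgroup whose Galois cohomology is trivial (or whose class maps trivially), so that we genuinely get a real conjugating element rather than merely stable conjugacy. This requires the standard fact that $H^1(\Gamma,U)=1$ for the unipotent radical $U$ of a real parabolic, together with care that the $\sigma$-stability of $M'$ (not just $P'$) is used to pin $c$ into $M'$ rather than the larger parabolic. I expect the unipotent-radical vanishing and the precise identification of where $c$ lives to be the crux; the rest is bookkeeping with the $\sigma$-stable structure.
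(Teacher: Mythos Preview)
Your proposal has two genuine gaps.

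\textbf{Step (1) rests on a false claim.} Two parabolic subgroups of $G$ sharing a common Levi factor $M'$ need not be conjugate at all, let alone by an element of $M'$ or of $\Norm_G(M')$. For instance, in $SL_3$ a maximal parabolic $P$ and its opposite $P^-$ have the same Levi but lie in distinct $G$-conjugacy classes. So you cannot in general arrange $\int(g)P=P'$ while preserving $\int(g)M=M'$; the hypotheses of the lemma do not even force $P$ and $P'$ to be $G$-conjugate.

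\textbf{Step (3) is missing the essential input.} Grant your setup and suppose you do land the cocycle $c=g^{-1}\sigma(g)$ in $M$ (via $\Norm_P(M)=M$), with $[c]$ trivial in $H^1(\Gamma,G)$. You then need $[c]$ trivial in $H^1(\Gamma,M)$, equivalently in $H^1(\Gamma,P)$ (the vanishing $H^1(\Gamma,U)=1$, applied to all inner twists of $U$, does give $H^1(\Gamma,P)\simeq H^1(\Gamma,M)$). But nothing you have written forces the fiber of $H^1(\Gamma,P)\to H^1(\Gamma,G)$ over the basepoint to be trivial. That is precisely the nontrivial fact needed, and it is equivalent to the surjectivity of $G(\R)\to(G/P)(\R)$, which is a theorem of Borel--Tits. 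The vanishing of $H^1(\Gamma,U)$ alone says nothing about this fiber; as written, your step (3) only reformulates the problem without solving it.

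The paper's proof is organized around exactly this missing point. It first invokes the Borel--Tits surjectivity $G(\R)\twoheadrightarrow(G/P)(\R)$ to show that conjugate $\sigma$-stable parabolics are already $G(\R)$-conjugate, thereby reducing to the case $P=P'$. Then, since any two Levi factors of a fixed parabolic $P$ are conjugate by a unique element $u$ of the unipotent radical $U$, one writes $M'=uMu^{-1}$ and reads off directly from $\sigma$-stability of $M,M'$ and $U\cap M=1$ that $u=\sigma(u)\in U(\R)$. Your cocycle framework can be completed, but only by importing the same Borel--Tits input; the unipotent vanishing by itself does not close the argument.
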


\begin{proof}
We first show that if $P,P'$ are conjugate then they are
$G(\R)$-conjugate. 
The parabolic subgroups conjugate to $P$ are in bijection with $G/P$,
via the map $gP\rightarrow gPg\inv$. The $\sigma$-stable
parabolic subgroups in this set are given by $(G/P)(\R)$. 
The map $G(\R)\rightarrow (G/P)(\R)$, obtained by taking
$\sigma$-fixed points of the projection $G\rightarrow G/P$, is surjective
\cite[Theorem 4.13(a)]{borel_tits}. If $g\in G(\R)$ maps to $Q\in (G/P)(\R)$
then $Q=gPg\inv$.

Therefore, after conjugating by $G(\R)$, we may assume $P=P'$. Suppose $M,M'$ are
$\sigma$-stable Levi factors of $P$. All Levi factors of $P$ are
$U$-conjugate, where $U$ is the unipotent radical of $P$. Write
$M'=uMu\inv$ ($u\in U$). Then  $M'$ is $\sigma$-stable if and only if 
$u\inv\sigma(u)\in M$, but $U$ is $\sigma$-stable and $U\cap M=1$, so
$u=\sigma(u)$.

\end{proof}

We say two $\sigma$-stable Cartan subgroups $H,H'$ of $G$ are stably conjugate if
there exists $g\in G(\C)$ such that $gHg\inv=H'$ and $\int(g):H\rightarrow
H'$ is defined over $\R$.

\begin{lemma}[\cite{shelstad_innerforms}, Corollary 2.3]
\label{l:stablecartans}
Two $\sigma$-stable Cartan subgroups are stably conjugate if and
only if they are $G(\R)$-conjugate.
\end{lemma}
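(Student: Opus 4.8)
The plan is to prove the nontrivial implication, that stable conjugacy forces $G(\R)$-conjugacy, since the reverse is immediate: if $H'=g_0Hg_0\inv$ with $g_0\in G(\R)$, then $\sigma(g_0)=g_0$, so $\int(g_0)$ commutes with $\sigma$ and hence $\int(g_0)\colon H\to H'$ is defined over $\R$.

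So suppose $g\in G$ satisfies $gHg\inv=H'$ with $\int(g)\colon H\to H'$ defined over $\R$. First I would unwind the hypothesis: intertwining the two real structures forces $\sigma(g)\inv g$ to centralize $H$, hence to lie in $H$ (as $H=\Cent_G(H)$); but what I really use is simply that $\int(g)$ is an $\R$-rational isomorphism $H\to H'$. The strategy is to descend through the standard structure of a real Cartan subgroup: attach to $H$ its maximal split subtorus $A$, the identity component of $\{h\in H:\sigma(h)=h\inv\}$, and the Levi subgroup $M=\Cent_G(A)$, and likewise $A',M'$ for $H'$. Because $A$ is defined intrinsically from the real structure on $H$ and $\int(g)$ is $\R$-rational, $\int(g)$ carries $A$ onto $A'$, whence $gAg\inv=A'$ and $gMg\inv=\Cent_G(gAg\inv)=M'$.

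Now $M,M'$ are $\sigma$-stable Levi factors of $\sigma$-stable (real) parabolic subgroups, since $A,A'$ are $\sigma$-stable split tori, and they are conjugate by $g$. Lemma \ref{l:reallevis} then produces $g_1\in G(\R)$ with $g_1Mg_1\inv=M'$. Replacing $H$ by the $G(\R)$-conjugate $g_1Hg_1\inv$, I may assume $M=M'$. By construction $A$ is now the central split torus of $M$, so $H$ and $H'$ are Cartan subgroups of $M$ whose split part equals this central $A$; that is, $H$ and $H'$ are fundamental (maximally compact) Cartan subgroups of the real group $M(\R)$.

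It remains to show that any two fundamental Cartan subgroups of $M$ are $M(\R)$-conjugate, which is the ``$G(\R)$-conjugacy of compact Cartan subgroups'' input. Here each such $H$ is the centralizer in $M$ of a maximal torus $T$ of the identity component of the compact group $K_M=M^\theta$. Since all maximal tori of $K_M^0$ are conjugate under $K_M^0\subset M(\R)$, choosing $k$ with $kTk\inv=T'$ gives $kHk\inv=\Cent_M(kTk\inv)=H'$, so $H$ and $H'$ are $M(\R)$-conjugate and hence $G(\R)$-conjugate. I expect the main obstacle to lie in the reduction step rather than in this final conjugacy: one must verify carefully that the split part is genuinely preserved by the $\R$-rational isomorphism $\int(g)$, and that $M$ really is a Levi factor of a $\sigma$-stable parabolic, so that Lemma \ref{l:reallevis} applies. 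The conjugacy of maximal tori in the compact group $K_M^0$ is then the only serious external fact invoked.
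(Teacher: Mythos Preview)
Your proposal is correct and follows essentially the same route as the paper: reduce via the split part $A$ of $H$ to the Levi $M=\Cent_G(A)$, apply Lemma~\ref{l:reallevis} to bring $M$ and $M'$ together by a $G(\R)$-conjugation, and then finish with the conjugacy of compact (fundamental) Cartan subgroups inside $M$. The only cosmetic differences are that the paper explicitly constructs the $\sigma$-stable parabolic containing $M$ (via a regular element of $\mathrm{Lie}(A)$) rather than just asserting its existence, and phrases the last step as conjugacy of compact Cartan subgroups in the derived group of $M$ rather than via maximal tori of $K_M^0$; these are equivalent formulations of the same fact.
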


\begin{proof}
Suppose $H,H'$ are $\sigma$-stable and stably conjugate. 
Choose $g\in G$ so that $\int(g):H\rightarrow H'$ is defined over
$\R$.

Write $H=TA$ where $T$ (resp. $A$) is the maximal compact
(resp. split) subtorus of $H$. 
Let $M=\Cent_G(A)$, this is $\sigma$-stable and 
contained in a $\sigma$-stable parabolic subgroup $P$
(define the roots of $H$ in $U$ to be $\{\alpha\mid \text{Re}(\alpha(\gamma))>0\}$ for 
$\gamma$ a regular element of the Lie algebra of $A$). Define $M'\supset
H'$ similarly. 

The fact that $\int(g)|_H$ is defined over $\R$ implies
$\int(g)(M)=M'$, so by the previous Lemma choose $y\in G(\R)$ so that $\int(y)(M')=M$. 
Then $T$ and $\int(y)(T')$ are compact Cartan subgroups of the derived group
of $M$. Therefore there exists $m\in M(\R)$ such that
$\int(my)(T')=T$. Then $my\in G(\R)$ and 
$\int(my)(H')=H$.
\end{proof}

\begin{proof}[Proof of Proposition \ref{p:sigma}]
For injectivity suppose $h,h'\in H^{-\sigma}$ and $[h],[h']$ have the same image in $H^1(\Gamma,G)$.  
Thus $h=g h'\sigma(g\inv)$ for some $g\in G$. 
Let $\sigma'=\int(h)\circ\sigma$; this is a real form of $G$.
It is immediate that 
$\int(g):H\rightarrow g Hg\inv$ commutes with $\sigma'$.
By the previous lemma $gHg\inv=yHy\inv$ for some $y\in G^{\sigma'}$. 
Let $n=g\inv y\in N$. Then $n\inv h'\sigma(n)=y\inv gh'\sigma(g\inv)\sigma(y)=
y\inv h\sigma(y)=h$ (the last equality follows from the condition $y\in G^{\sigma'}$). 

It is easy to see the  image of $n$ in $W$ is contained in $W^\sigma$. By
Lemma \ref{l:Wi} we can replace $n$ with an element mapping to $W_i$.
This proves injectivity.

For surjectivity we follow \cite{borovoi}. Suppose $g\in G^{-\sigma}$. Write the Jordan decomposition of 
$g$ as $g=su$.
Then $u\inv\sigma(u\inv)=s\sigma(s)$, so
$u\sigma(u)=s\sigma(s)=1$. Then $u=v^2$ where $v\sigma(v)=1$ and $v$
commutes with $s$. It follows that $vg\sigma(v\inv)=s$, so without
loss of generality we may assume $g$ is semisimple.  
Furthermore $g\sigma(g)=1$ implies $g$ commutes with $\sigma(g)$, 
so without loss of generality $g$ is contained in a $\sigma$-stable
torus $H'$ (not necessarily the same as $H$). 

Write $H'=T'A'$ ($T'$ and $A'$ are compact and split, respectively)
and $g=ta$ accordingly.  Then $t\sigma(t)=(a\sigma(a))\inv$, so this
element is in $T'(\R)\cap A'(\R)$, which is trivial. Therefore
$a\sigma(a)=1$. Since $H^1(\Gamma,A')=1$ choose $b\in A'$ so that
$b\sigma(b\inv)=a$, and $bg\sigma(b\inv)=t$. So we  may assume
$g\in T'$. But $T'$, being a compact torus, is conjugate to a subtorus
of any fundamental torus. This proves surjectivity.
\end{proof}

The analogous result, with essentially the same proof, holds with $\theta$ in place of $\sigma$.
As in Lemma \ref{l:H1N}, $W^\theta$ acts on $H^1_\theta(\Ztwo,H)$.

\begin{proposition}
\label{p:theta}
The  map
$H^1(\Ztwo,H)\rightarrow H^1(\Ztwo,G)$ factors through
the quotient by $W_i$, and induces an injection
\begin{equation}
\label{e:theta}
\phi:H^1_\theta(\Ztwo,H)/W_i\hookrightarrow H^1(\Ztwo,G).
\end{equation}
If $H$ is fundamental this is an isomorphism.
\end{proposition}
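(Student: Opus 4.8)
The plan is to transport the proof of Proposition \ref{p:sigma} essentially verbatim, replacing $\sigma$ by $\theta$, the real points $G(\R)$ by $K=G^\theta$, ``split'' by ``$\theta$-split'', and ``defined over $\R$'' by ``commuting with $\theta$''. The first thing to record is that the quotient by $W_i$ is literally unchanged. Writing $\theta=\sigma\sigma_c$ and using that the compact form $\sigma_c$ acts by $-1$ on the roots, one gets $\theta\alpha=-\sigma\alpha$, so $\theta\alpha=\alpha$ precisely when $\sigma\alpha=-\alpha$; hence the imaginary roots for $\theta$ coincide with those for $\sigma$ and $W_i$ is the same group. The $\theta$-analogue of Lemma \ref{l:Wi} then goes through with the identical decomposition $W^\theta=(W_C)^\theta\ltimes[W_i\times W_r]$: the real-root factor $W_r$ and the factor $(W_C)^\theta$ act trivially on $H^1_\theta(\Ztwo,H)$, giving $H^1_\theta(\Ztwo,H)/W^\theta=H^1_\theta(\Ztwo,H)/W_i$.

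Next I need the $\theta$-versions of the two geometric lemmas. The analogue of Lemma \ref{l:stablecartans}, that two $\theta$-stable Cartan subgroups are $K$-conjugate iff ``$\theta$-stably conjugate'', is proved by copying the $\sigma$-argument: write $H=TA$ with $T=(H^\theta)^\circ\subseteq K$ and $A=(H^{-\theta})^\circ$ the $\theta$-split part, set $M=\Cent_G(A)$ inside a $\theta$-stable parabolic, conjugate $M'$ to $M$ by an element of $K$ using the Levi lemma, and finish by noting that the two $\theta$-fixed (``compact'') Cartan subgroups $T,\int(y)(T')$ of the derived group of $M$ are $K\cap M$-conjugate, being maximal tori of the connected reductive group $(K\cap M)^\circ$.

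The main obstacle is the $\theta$-analogue of Lemma \ref{l:reallevis}: conjugate $\theta$-stable Levi factors are $K$-conjugate. Here the $\sigma$-proof does \emph{not} transcribe, because the input it rests on, surjectivity of $G(\R)\to(G/P)(\R)$, \emph{fails} for $\theta$. Indeed $K\to(G/P)^\theta$ is surjective only when $\ker\bigl(H^1_\theta(\Ztwo,P)\to H^1_\theta(\Ztwo,G)\bigr)$ is trivial, and $\theta$-stable parabolics in a single conjugacy class generally fall into several $K$-orbits; for instance in $G=SL(2,\C)$ with $\theta=\int(\diag(1,-1))$ the Borel subgroups $B^+$ and $B^-$ are both $\theta$-stable and $G$-conjugate but not $K$-conjugate. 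The way around this is that the Levis we must handle are of the form $M=\Cent_G(A)$ for a $\theta$-split torus $A$, and such an $M$ is recovered as $\Cent_G(A_M)$ from its maximal $\theta$-split central torus $A_M=(Z(M)^{-\theta})^\circ$; hence any conjugation carrying $M$ to $M'$ carries $A_M$ to $A_{M'}$, and it suffices to prove the sub-statement that $G$-conjugate $\theta$-split tori are $K$-conjugate. (Consistently with the failure of the parabolic statement, $B^+$ and $B^-$ above share the Levi factor $H$, so the Levi lemma survives.) I would prove the sub-statement by enlarging $A_M,A_{M'}$ to maximal $\theta$-split tori, invoking the standard fact that maximal $\theta$-split tori are $K$-conjugate, and then realizing the little Weyl group $N_G(S)/\Cent_G(S)$ of a maximal $\theta$-split torus $S$ by $N_K(S)$. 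This sub-lemma is the one genuinely new ingredient over the $\sigma$-case.

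With the lemmas in hand the two halves of the Proposition copy the proof of Proposition \ref{p:sigma}. For injectivity, if $h=gh'\theta(g\inv)$ with $h,h'\in H^{-\theta}$, set $\theta'=\int(h)\circ\theta$; then $\int(g)\colon H\to gHg\inv$ commutes with $\theta'$, the $\theta$-analogue of Lemma \ref{l:stablecartans} gives $gHg\inv=yHy\inv$ with $y\in G^{\theta'}$, and $n=g\inv y\in N$ satisfies $n\inv h'\theta(n)=h$ with image in $W^\theta$, which by the $\theta$-analogue of Lemma \ref{l:Wi} may be taken in $W_i$. For surjectivity I reduce a class $g\in G^{-\theta}$ via the Jordan decomposition to a semisimple element lying in a $\theta$-stable torus $H'=T'A'$ (with $T'$ the $\theta$-fixed and $A'$ the $\theta$-split part), kill the $A'$-component using $H^1_\theta(\Ztwo,A')=A'/(A')^2=1$ (trivial since $\C^*$ is divisible), and conjugate the remaining $\theta$-fixed torus $T'\subseteq K$ into a fundamental Cartan subgroup. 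Everything apart from the $\theta$-split torus sub-lemma is a faithful transcription of the $\sigma$-case.
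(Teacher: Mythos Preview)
Your proof is correct but takes a different route from the paper at the key step, the $\theta$-analogue of the Cartan conjugacy lemma. You keep the Levi $M=\Cent_G(A)$ exactly as in the $\sigma$-proof, observe correctly that the parabolic conjugacy argument does not transcribe, and replace it by the sub-lemma that $G$-conjugate $\theta$-split tori are $K$-conjugate; you then finish with conjugacy of the $\theta$-fixed (``compact'') Cartan subgroups inside $M\cap K$. The paper instead \emph{swaps the roles of $T$ and $A$}: it sets $L=\Cent_G(T)$ (the centralizer of the $\theta$-fixed part), embeds $L$ in a $\theta$-stable parabolic $Q=LV$, invokes a $\theta$-Levi lemma for this $(Q,L)$, and finishes with conjugacy of $\theta$-\emph{split} Cartan subgroups inside $L$. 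Thus where the $\sigma$-argument reads ``centralize the split part, then conjugate compact Cartans'', the paper's $\theta$-argument reads ``centralize the compact part, then conjugate split Cartans''. Your version has the virtue of making the obstacle in the naive transcription explicit and isolating the one genuinely new input; the paper's swap is more symmetric under the $\sigma\leftrightarrow\theta$ duality and yields a general $\theta$-Levi lemma as a by-product, though its proof of that lemma is left as a one-line sketch. The surjectivity arguments in the two approaches are identical.
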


First we first need versions of Lemmas \ref{l:reallevis} and \ref{l:stablecartans}.
Let $K=G^\theta$.

\begin{lemma}
\label{l:thetalevis}
Suppose $Q,Q'$ are $\theta$-stable parabolic subgroups of $G$, 
with $\theta$-stable Levi factors $L,L'$. 
If $L,L'$ are conjugate then they are $K$-conjugate.

Two $\theta$-stable Cartan subgroups are conjugate if and only if they
are $K$-conjugate.
\end{lemma}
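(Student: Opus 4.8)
The plan is to prove both assertions as the $\theta$-analogues of Lemmas \ref{l:reallevis} and \ref{l:stablecartans}, substituting $\theta$ for $\sigma$ and $K=G^\theta$ for $G(\R)=G^\sigma$ throughout. The reverse implications are trivial, since elements of $K$ commute with $\theta$, so I only treat the forward direction in each case. As in Lemmas \ref{l:reallevis} and \ref{l:stablecartans}, the Levi factors are compared up to plain conjugacy while the Cartan subgroups are compared up to stable conjugacy, that is, conjugacy by an element respecting $\theta$; the latter restriction is essential, since the split and compact Cartan subgroups of $\SL(2,\R)$ are $G$-conjugate but not $K$-conjugate. The one place the argument must genuinely depart from the $\sigma$-case is the construction of stable parabolics: in Lemma \ref{l:reallevis} a $\sigma$-stable parabolic is cut out by a regular element of the real, hence $\sigma$-fixed, split subtorus, whereas $\theta$ inverts a $\theta$-split torus and would send such a parabolic to its opposite. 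I would therefore cut out $\theta$-stable parabolics using regular elements of the $\theta$-\emph{fixed} subtorus, interchanging the roles played by the split and the compact directions.

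For the Levi assertion, the adjustment inside the unipotent radical copies Lemma \ref{l:reallevis} verbatim: reducing to a common $\theta$-stable parabolic $Q$ with unipotent radical $U$, two $\theta$-stable Levi factors differ by some $u\in U$, and $\theta(L')=L'$ together with $U\cap L=1$ and $\theta(U)=U$ forces $u=\theta(u)\in U\cap K$. The essential new input is the passage to a common parabolic. The Borel--Tits surjectivity $G^\sigma\twoheadrightarrow(G/P)^\sigma$ used in Lemma \ref{l:reallevis} has no literal counterpart, since $K$ need not act transitively on $(G/Q)^\theta$; for $\SL(2,\R)$, $K=\SO(2,\C)$ fixes each of the two $\theta$-stable Borel subgroups. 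Instead I would realize $L=\Cent_G(\gamma)$ and $L'=\Cent_G(\gamma')$ as centralizers of regular elements $\gamma,\gamma'$ of the $\theta$-fixed parts of their central tori, namely the elements defining $Q$ and $Q'$; these lie in $\mathfrak k=\g^\theta$. Since maximal tori of $K$ are $K$-conjugate, I may assume $\gamma,\gamma'$ lie in a common maximal torus of $K$, hence in a common $\theta$-stable Cartan subgroup $H$ of $G$. The Levi factors $L,L'$ then correspond to $W$-conjugate subsets of the roots of $H$, and it remains to realize the matching element of $W$ in $\Norm_K(H)$ by choosing it $\theta$-fixed.

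For the Cartan assertion I would imitate Lemma \ref{l:stablecartans}. Given $\theta$-stable Cartan subgroups $H,H'$ conjugate compatibly with $\theta$, write $H=TA$ with $T=(H^\theta)^0$ and $A$ the $\theta$-split part, and set $L=\Cent_G(T)$, a $\theta$-stable Levi factor of a $\theta$-stable parabolic cut out by a regular $\theta$-fixed element of $\mathrm{Lie}(T)$; define $L'\supseteq H'$ similarly. The given conjugation carries $T$ to $T'$ and hence $L$ to $L'$, so by the Levi assertion I may conjugate $L'$ back to $L$ by some $y\in K$. Then $A$ and $\int(y)(A')$ are maximal $\theta$-split tori of the derived group of $L$, which are $(K\cap L)$-conjugate---the $\theta$-analogue of the $G(\R)$-conjugacy of fundamental, compact Cartan subgroups used in Lemma \ref{l:stablecartans}. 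Since $T$ is $\theta$-fixed and central in $L$, the conjugation matching the $\theta$-split parts also carries $H'$ to $H$, giving $K$-conjugacy.

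The main obstacle is precisely the replacement of Borel--Tits in the Levi step: establishing a $K$-conjugacy of $G$-conjugate $\theta$-stable Levi factors without transitivity of $K$ on $(G/Q)^\theta$. I expect this to rest on the two structural facts about the symmetric pair $(G,K)$ that also drive the rest of the argument, namely that maximal tori of $K$ and maximal $\theta$-split tori are each $K$-conjugate, together with the observation that a Weyl element preserving a $\theta$-fixed configuration may be taken $\theta$-fixed and so lifted to $\Norm_K(H)$. Once the Levi step is in hand, the unipotent adjustment and the final $\theta$-split-torus conjugacy run parallel to the proofs of Lemmas \ref{l:reallevis} and \ref{l:stablecartans}.
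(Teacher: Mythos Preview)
You are right that the Cartan assertion, read literally, is false and must be taken under stable conjugacy; your $SL(2,\R)$ example shows this. But the Levi assertion is false under plain conjugacy as well, and this is precisely where your argument breaks. Take $G=SL(3,\C)$ with $\theta=\int(\diag(1,1,-1))$, so $K=S(\GL_2\times\GL_1)$; both standard maximal parabolics are $\theta$-stable with $\theta$-stable Levi factors $L_1=K$ and $L_2=S(\GL_1\times\GL_2)$, which are $G$-conjugate but not $K$-conjugate, since every $k\in K=L_1$ normalizes $L_1$. Your step ``realize the matching element of $W$ in $\Norm_K(H)$'' fails here: $\theta$ acts trivially on $W=S_3$, so every Weyl element is $\theta$-fixed, yet $\Norm_K(H)/H\simeq S_2$ does not contain the transposition $(1\,3)$ carrying $L_1$ to $L_2$. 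Being $\theta$-fixed in $W$ is strictly weaker than lifting to $\Norm_K(H)$, so the heuristic in your final paragraph cannot close the gap. The paper's own one-line sketch (``using the fact that $P\simeq K/K\cap P$'') is no help on this point; read as $G=KQ$ it is simply false for $\theta$-stable $Q$, as the same example shows.

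What makes the Cartan assertion go through is that the Levi subgroups $L=\Cent_G(T)$ with $T=(H^\theta)^0$ actually occurring there carry an extra constraint: $H\cap[L,L]$ is a Cartan of $[L,L]$ with finite $\theta$-fixed part, so $\theta$ restricted to $[L,L]$ is the split involution. In the counterexample $\theta$ is trivial on $[L_1,L_1]$, so $L_1$ never arises from a $\theta$-stable Cartan in this way, and no stably conjugate pair of Cartans produces the pair $(L_1,L_2)$. Your outline for the Cartan part is therefore recoverable, but the Levi step must be stated and proved under this sharper hypothesis rather than invoked as a general fact about $\theta$-stable Levis.
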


\begin{proof}
The proof of the first part is  similar to that of Lemma \ref{l:reallevis}, using the
fact that $P\simeq K/K\cap P$.
The proof of Lemma
\ref{l:stablecartans} also carries over to this situation;
in this setting we use the fact that any two $\theta$-stable  split
Cartan subgroups of $L$
are $K$-conjugate.  We leave the details to the reader.  
\end{proof}

\begin{proof}[Proof of Proposition \ref{p:theta} (sketch)]
Write $H=TA$ as before, and let
  $L=\Cent_G(T)$.  This is contained in a $\theta$-stable Cartan
  subgroup $Q$. Write $Q=LV$ with $V$ unipotent. 
Injectivity follows as before
with
  $(Q,L,V,\theta)$ in place of $(P,M,U,\sigma)$.

In the proof of surjectivity write $g=ta$. Since $\theta$ acts by
inverse on $A$, and (since $A$ is connected) any element of $A$  is of the
form $b\theta(b\inv)$ for some $b\in A$,  after conjugating  by $b$ 
we may assume $g\in T$. Finally $T$ is $K$-conjugate to a subtorus 
of a fixed fundamental Cartan subgroup. 

We leave the few remaining details to the reader.
\end{proof}

\begin{lemma}
\label{l:agree}
Suppose $H$ is both $\sigma$ and $\theta$ stable. Then the actions of
$\sigma$ and $\theta$ on $W$ agree, and the isomorphism of Proposition \ref{p:torus} commutes with the
actions of $W^\sigma=W^\theta$. 

In particular $H^i_\sigma(\Gamma,W)=H^i_\theta(\Ztwo,W)$. 
\end{lemma}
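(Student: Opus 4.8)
The plan is to prove the two assertions separately. The equality $H^i_\sigma(\Gamma,W)=H^i_\theta(\Ztwo,W)$ follows formally once I show that $\sigma$ and $\theta$ induce the \emph{same} automorphism of $W$, since then the two cochain complexes computing these cohomologies literally coincide. So the content is (i) $\sigma$ and $\theta$ agree on $W$, and (ii) the canonical isomorphism $\Psi$ of Proposition \ref{p:torus} intertwines the two $W$-actions of Lemma \ref{l:H1N}.

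For (i) I would set $\sigma_c=\sigma\theta=\theta\sigma$. This is an antiholomorphic involution stabilizing $H$ (as $\sigma$ and $\theta$ do), and $G^{\sigma_c}$ is compact, so $H^{\sigma_c}$ is a compact torus of full rank; hence $\sigma_c|_H$ is the compact real form of $H$ and acts by $-\mathrm{id}$ on $X^*(H)$. Since $\sigma_c$ normalizes $H$ it acts on $W=\Norm_G(H)/H$, and under the faithful homomorphism $W\hookrightarrow\Aut(H)\cong\GL(X^*(H))$ this action becomes conjugation by $\sigma_c^*=-\mathrm{id}$, a central scalar; thus $\sigma_c$ acts trivially on $W$. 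Because the map from the automorphisms of $G$ stabilizing $H$ to $\Aut(W)$ is a homomorphism and $\theta=\sigma\sigma_c$, the actions of $\theta$ and $\sigma$ on $W$ coincide, giving $W^\sigma=W^\theta$.

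For (ii), fix $w\in W^\sigma=W^\theta$ with a lift $n\in\Norm_G(H)$. Since $H$ is abelian and $w$ is fixed by both involutions, $\int(n)$ commutes with $\sigma$ and with $\theta$ on $H$; consequently the action of Lemma \ref{l:H1N} decomposes as a functorial linear part $[h]\mapsto[\int(n)(h)]$ followed by translation by the class $[n\sigma(n\inv)]$, respectively $[n\theta(n\inv)]$. The single automorphism $\int(n)$ commutes with the squaring map and preserves $H_2$, hence acts on the whole commutative diagram that defines $\Psi$; by the uniqueness of $\Psi$ it intertwines the two linear parts. What remains is to show $\Psi$ carries the translation class $[n\sigma(n\inv)]$ to $[n\theta(n\inv)]$, equivalently that $\Psi$ intertwines the connecting maps $W^\sigma\to H^1_\sigma(\Gamma,H)$ and $W^\theta\to H^1_\theta(\Ztwo,H)$ attached to $1\to H\to N\to W\to 1$.

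This translation comparison is the main obstacle, and I would reduce it to the level of $H_2$, where $\Psi$ is induced by the identity map $\phi$. The key point is that $n$ may be chosen so that $b:=n\inv\sigma_c(n)\in H_2$: indeed $b\in H^{-\sigma_c}$, and since $(1-\sigma_c)H$ is the identity component of $H^{-\sigma_c}$ while $H_2$ meets every component, an adjustment $n\mapsto nh_0$ places $b$ in $H_2$. With this choice $n\sigma(n\inv)$ and $n\theta(n\inv)$ differ by $w(\sigma(b)\inv)\in H_2$, on which $\sigma$ and $\theta$ agree. Taking a square root $d$ of $n\sigma(n\inv)$ produces the explicit lift $d\sigma(d)\in H_2^\sigma$ of the $\sigma$-translation class; a short cocycle computation, using $\theta=\sigma\sigma_c$ and $b\in H_2$, then verifies that this same element represents the $\theta$-translation class. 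Hence the two translations correspond under $\phi$, and therefore under $\Psi$, completing (ii). The bookkeeping in this final cocycle identity is the only genuine labor; the rest is formal.
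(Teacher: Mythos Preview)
Your argument for (i) is correct and is essentially equivalent to the paper's: you deduce that $\sigma_c$ acts trivially on $W$ from its action on $X^*(H)$ by $-\mathrm{id}$; the paper phrases the same fact as ``every Weyl group element has a representative in $G_c(\R)=G^{\sigma_c}$'' (the standard statement that a compact connected Lie group realizes the full Weyl group of its maximal torus).

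For (ii) your decomposition into a linear part and a translation part is sound, and the linear part is handled correctly. But you then work far harder than necessary, and the promised ``short cocycle computation'' at the end is neither short nor actually carried out. The paper's key observation, which you miss, is precisely the concrete version of (i): one may \emph{choose} the lift $n\in G^{\sigma_c}$. In your notation this forces $b=n^{-1}\sigma_c(n)=1$, not merely $b\in H_2$. (Your own adjustment $n\mapsto nh_0$ could have achieved $b=1$: the obstruction is the class of $b$ in $H^1_{\sigma_c}(\Gamma,H)$, which vanishes exactly because $w$ lifts to $N^{\sigma_c}$.) With $b=1$ one has $\sigma(n)=\theta(n)$, so the two translation cocycles $n\sigma(n^{-1})$ and $n\theta(n^{-1})$ are \emph{equal}; moreover this common element $a$ lies in $H^{\sigma_c}$, where $\sigma$ and $\theta$ agree, so its square root $d\in H^{\sigma_c}$ satisfies $d\sigma(d)=d\theta(d)$ and the comparison under $\Psi$ is immediate. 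Your route with $b\in H_2$ is in principle salvageable, but the residual identity you need, that $cd\sigma(d)^{-1}\in(1-\theta)H$ with $c=w(\sigma(b)^{-1})$, is not obvious and you have not verified it; the paper's choice of lift removes this difficulty entirely.
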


\begin{proof}
Write $\theta=\sigma\sigma_c$ where $\sigma,\sigma_c$ commute and
$G_c(\R)=G^{\sigma_c}$ is compact.
Then $W$ is isomorphic to $\Norm_{G_c(\R)}(H^{\sigma_c})$, i.e. 
every Weyl group element has a
representative in $G_c(\R)$.   The result follows.
\end{proof}

\begin{proof}[Proof of Theorem \ref{t:main}]
Fix a $\sigma,\theta$-stable fundamental Cartan subgroup $H_f$ and 
consider the diagram
\begin{equation}
\label{diagram}
\xymatrix{
H^1_\sigma(\Gamma,H_f)/W_i\ar[d]_{\simeq}&&\ar[ll]_{\eqref{e:sigma}}^{\simeq}H^1_\sigma(\Gamma,G)\ar[d]^{\simeq}\\
H^1_\theta(\Ztwo,H_f)/W_i\ar[rr]_{\eqref{e:theta}}^{\simeq}&&H^1_\theta(\Ztwo,G)
}
\end{equation}

The left arrow is the one from Proposition \ref{p:torus}, together
with Lemma \ref{l:agree}.
Define the right arrow to be the composition of the other three. 
This is an isomorphism, depending only on the choice of $H_f$. 
Any two fundamental Cartan subgroups are conjugate. It is easy to see
this changes the induced map by twisted conjugation $g\rightarrow
xg\sigma(x\inv)$ and $g\rightarrow xg\theta(x\inv)$. These are
absorbed in the quotients defining the cohomology, so the resulting isomorphism
is independent of the choice of $H_f$.
\end{proof}

The isomorphism of Theorem \ref{t:main} may be described as follows.
Suppose $\gamma\in H^1(\Gamma,G)$.
Chooose $h\in H^{-\sigma}$ so that $\gamma=[h]$.
Furthermore choose, as is possible, $x\in H$ so that $h'=xh\sigma(h)\inv\in H_2$. 
Then take $\gamma$ to  $[h']\in H^1(\Ztwo,G)$.

Recall (Lemma \ref{l:H1N}) $H^1_\sigma(\Gamma,H)/W_i$ is the
image of $H^1(\Gamma,H)$ in $H^1(\Gamma,N)$. We
bring $N$ into the picture in Section \ref{s:cartans}.

\begin{remarkplain}
By Proposition \ref{p:torus}
if $G$ is  a torus, in addition to the isomorphism $H^1(\Gamma,G)\simeq H^1(\Ztwo,G)$,
we have $\wh H^0_\sigma(\Gamma,G)\simeq \wh H_\theta^0(\Ztwo,G)$, 
i.e. $\pi_0(G(\R))=\pi_0(G^\theta)$ (see Section \ref{s:torus}).
It is well known that $\pi_0(G(\R))\simeq \pi_0(G^\theta)$ for $G$ reductive.
It would be interesting if one could define ``non-commutative Tate cohomology'' 
in such a way that $\wh H_\theta^0(\Ztwo,G)=\pi_0(G^\theta)$
and $\wh H_\sigma^0(\Gamma,G)=\pi_0(G(\R))$, and periodicity holds,
so that Proposition \ref{p:torus} holds for $G$ reductive.

\end{remarkplain}

\begin{exampleplain}
\label{ex:serre1}
If $G(\R)$ is compact, $\theta$ is the identity, and $H^1(\Ztwo,G)$
is the set of conjugacy classes of involutions of $G=G(\C)$. 
This is in bijection with $H_2/W$, where $H=H(\C)$ is any Cartan subgroup
and $W$ is the Weyl group (see Example \ref{ex:compact}).

On the other hand $H^1(\Ztwo,G(\R))$ (with the trivial action)
is the set of conjugacy classes of involutions in $G(\R)$,
i.e. $H(\R)_2/W$. Since $H(\R)$ is compact this is equal to $H(\C)_2/W$. So we recover 
\cite[Theorem 6.1]{serre_galois}: $H^1(\Gamma,G)\simeq H^1(\Ztwo,G(\R))=H(\R)_2/W$.
\end{exampleplain}

\begin{exampleplain}
\label{ex:realforms}
Define
$$
\oInv(G)=\{\text{antiholomorphic involutions of }G\}
$$
and if $\sigma\in \oInv(G)$ let 
$$
\label{e:Inv}
\oInv_\sigma(G)=\{\mu\in \oInv(G)\mid \mu\inv\circ\sigma\text{ is an inner automorphism of }G\}.
$$
We view $\oInv_\sigma(G)$ as a pointed set with distinguished element $\sigma$.

By definition the set of  real forms of $G$
 is  $\oInv(G)/G$ (with $G$
acting by conjugation by inner automorphisms), and $\oInv_\sigma(G)/G$
is the set of real forms  inner to $\sigma$ (see Section
\ref{s:galois}).

There is a canonical isomorphism (of pointed sets)
$$
H^1_\sigma(\Gamma,\Gad)\overset{1-1}\longleftrightarrow \oInv_\sigma(G)/G.
$$
The map takes $[g]$ to 
the conjugacy class of $\int(g)\circ\sigma$.
Thus we have 
\begin{equation}
\label{e:H1realforms}
H^1_\sigma(\Gamma,\Gad)\overset{1-1}\longleftrightarrow
\{\text{equivalence classes of real forms inner to } \sigma\}
\end{equation}

\begin{remarkplain}
\label{r:notserre2}
This statement is true for our Definition of real forms. See Definition \ref{d:realform} and 
Remark \ref{r:notserre}. If we were to use  the usual definition, for example \cite{serre_galois}, we 
would replace
left hand side with the image of the map to $H^1_\sigma(\Gamma,\Aut(G))$. 
\end{remarkplain}

Now
let $\Inv(G)$ be the holomorphic involutions of $G$, and let  $\Inv_\theta(G)$  be those which are inner to $\theta$. 
Then
there is a canonical isomorphism 
\begin{equation}
\label{e:H1algad}
H^1_\theta(\Ztwo,\Gad)\overset{1-1}\longleftrightarrow \Inv_\theta(G)/G,
\end{equation}
taking $[g]$ to $\int(g)\circ\theta$. 
We have a commutative diagram:
\begin{equation}
\label{e:cartansquare}
\xymatrix{
H^1_\sigma(\Gamma,\Gad)\ar@{<->}[d]_{\text{Theorem }
  \ref{t:main}}\ar@{<->}[r]&\oInv_\sigma(G)/G\ar@{=}[r]\ar@{<->}[d]&\text{\{real
  forms of }G\text{ inner to }\sigma\}\\
H^1_\theta(\Ztwo,\Gad)\ar@{<->}[r]&\Inv_\theta(G)/G
}
\end{equation}
All the arrows are isomorphisms (of pointed sets), and the equality is
a definition. The right hand vertical arrow is Cartan's description of
real forms in terms of holomorphic involutions. 
\end{exampleplain}

\begin{exampleplain}
\label{ex:sl2}
Suppose $G=PSL(2,\C)$. This has two real forms, $PGL(2,\R)\simeq SO(2,1)$ and $SO(3)$.
Since $G$ is adjoint $|H^1(\Gamma,G)|=2$ for either real form.

Now let $G=SL(2,\C)$. From  Example \ref{ex:serre1} if $G(\R)=SU(2)$ then $|H^1(\Gamma,G)|=2$. 
On the other hand if $G(\R)=SL(2,\R)$ then 
it is well known that $H^1(\Gamma,G)=1$.
To see this using Theorem \ref{t:main},
take 
$H$ to be the diagonal Cartan subgroup, and $\theta_c=1$, $\theta_s=\int(\diag(i,-i))$
(the Cartan involutions for $SU(2)$ and $SL(2,\R)$, respectively). 
In both cases $H_2=\pm I$. 
What is different  is the twisted action of $W_i$, which is trivial if $\theta=\theta_c$, 
whereas 
if $g$ represents the nontrivial element of the Weyl group
then  $gI\theta_s(g\inv)=-I$. 

Note that, in contrast to the adjoint case, although $SL(2,\R)$ and $SU(2)$ are inner forms of each other, 
their cohomology is different. See Lemma \ref{c:clarify}.
\end{exampleplain}

\begin{corollary}
\label{c:Mf}
Suppose $H_f$ is a fundamental Cartan subgroup.
Let $A_f$ be the 
the identity component of the (complex)
maximal split subtorus and let $M_f=\Cent_G(A_f)$. 
Then
$$
H^1_\sigma(\Gamma,G)\simeq H^1_\sigma(\Gamma,M_f)\simeq
H^1_\theta(\Ztwo,M_f)\simeq
H^1_\theta(\Ztwo,G).
$$
\end{corollary}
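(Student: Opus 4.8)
The plan is to exhibit all four groups as sharing a common fundamental Cartan subgroup and to reduce each of the four cohomology spaces to that Cartan subgroup modulo the same imaginary Weyl group, using the injectivity/surjectivity results already established. First I would observe that $H_f$ is, by definition, a fundamental Cartan subgroup of $G$, and that it is also a Cartan subgroup of $M_f=\Cent_G(A_f)$; moreover it is \emph{fundamental in $M_f$}. Indeed, $A_f$ is central in $M_f$, so the split rank of $H_f$ inside $M_f$ is minimal: any Cartan subgroup of $M_f$ contains the central split torus $A_f$, and $H_f$ has no split part beyond $A_f$ because it was fundamental in $G$. Thus $H_f$ is a $\sigma$- and $\theta$-stable fundamental Cartan subgroup of \emph{both} $G$ and $M_f$.

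Next I would apply Proposition \ref{p:sigma} twice. Since $H_f$ is fundamental in $G$, the map $\phi$ of \eqref{e:sigma} gives an isomorphism $H^1_\sigma(\Gamma,H_f)/W_i \xrightarrow{\ \simeq\ } H^1_\sigma(\Gamma,G)$, where $W_i$ is the imaginary Weyl group of $H_f$ in $G$. Since $H_f$ is also fundamental in $M_f$, the same Proposition applied to $M_f$ gives $H^1_\sigma(\Gamma,H_f)/W_i^{M_f} \xrightarrow{\ \simeq\ } H^1_\sigma(\Gamma,M_f)$. The key point here is that the imaginary roots of $H_f$ in $M_f$ coincide with the imaginary roots of $H_f$ in $G$: because $H_f$ is fundamental, the split torus $A_f$ is maximal split and the roots of $A_f$ in $G$ are the \emph{real} roots (those $\alpha$ with $\sigma(\alpha)=\alpha$), so $M_f=\Cent_G(A_f)$ contains exactly the root spaces for roots vanishing on $A_f$, which are precisely the imaginary roots together with the complex ones paired off by $\sigma$. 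Hence $W_i^{M_f}=W_i$, and the two quotients $H^1_\sigma(\Gamma,H_f)/W_i$ agree on the nose. Composing the two isomorphisms yields $H^1_\sigma(\Gamma,G)\simeq H^1_\sigma(\Gamma,M_f)$. The identical argument with Proposition \ref{p:theta} in place of Proposition \ref{p:sigma} gives $H^1_\theta(\Ztwo,M_f)\simeq H^1_\theta(\Ztwo,G)$.

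Finally, the middle isomorphism $H^1_\sigma(\Gamma,M_f)\simeq H^1_\theta(\Ztwo,M_f)$ is just Theorem \ref{t:main} applied to the reductive group $M_f$ (with its induced real form $\sigma|_{M_f}$ and Cartan involution $\theta|_{M_f}$). Concatenating the three isomorphisms produces the chain in the statement. The main obstacle I anticipate is the verification that the relevant imaginary root systems, and hence the groups $W_i$, genuinely coincide for $H_f$ in $G$ and in $M_f$; this is where one must use that $H_f$ is fundamental so that $A_f$ carries exactly the real roots and $M_f$ is the centralizer of a maximal split torus. Once that structural point is in hand, everything else is a direct application of Propositions \ref{p:sigma}, \ref{p:theta} and Theorem \ref{t:main}, and no further computation is required.
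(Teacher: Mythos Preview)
Your overall strategy is correct and is precisely the intended one: the paper states the corollary without proof because it is meant to follow immediately from Propositions~\ref{p:sigma} and~\ref{p:theta} applied to both $G$ and $M_f$, together with Theorem~\ref{t:main}, once one checks that $H_f$ is fundamental in $M_f$ and that the imaginary Weyl groups agree.

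However, your justification of $W_i^{M_f}=W_i$ contains some incorrect statements that should be fixed. First, $A_f$ is \emph{not} a maximal split torus of $G$: it is the split part of a \emph{fundamental} Cartan subgroup, hence has \emph{minimal} split rank among Cartan subgroups. Second, the roots of $H_f$ in $M_f=\Cent_G(A_f)$ are exactly the roots of $G$ vanishing on $A_f$, and a root $\alpha$ vanishes on $A_f$ if and only if $\sigma(\alpha)=-\alpha$, i.e.\ if and only if $\alpha$ is imaginary. No complex roots appear: a complex root has nontrivial restriction to $A_f$. (Incidentally, since $H_f$ is fundamental there are no real roots at all.) With this correction the argument becomes cleaner: the root system of $M_f$ with respect to $H_f$ is exactly the imaginary root system of $G$, so $W(M_f,H_f)=W_i$; all roots of $M_f$ are imaginary, hence $H_f$ has no real roots in $M_f$ and is fundamental there, and $W_i^{M_f}=W(M_f,H_f)=W_i$. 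Finally, in view of Remark~\ref{r:Wi} you should note that the $W_i$-action on $H^1_\sigma(\Gamma,H_f)$ is the same whether computed in $G$ or in $M_f$, since representatives for $W_i$ may be chosen in $\Norm_{M_f}(H_f)\subset\Norm_G(H_f)$.
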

Note that $A_f\subset Z\Leftrightarrow M_f=G\Leftrightarrow\text{ the derived group of }G$ is of equal
rank. 

\sec{Weyl groups and conjugacy of Cartan subgroups}
\label{s:cartans}

We continue in the setting of the previous section, with a Galois action $\sigma$ and a
corresponding Cartan involution $\theta$.

We first give short proof of a well known fact about Weyl groups 
\cite[Proposition 1.4.2.1]{warner_I}, \cite[Definition 0.2.6]{vogan_green}.

\begin{proposition}
\label{p:W}
Let $N$ be the normalizer of a $\theta,\sigma$-stable Cartan subgroup $H$.
Then there is a canonical isomorphism
\begin{equation}
\label{e:norm}
\Norm_{G(\R)}(H(\R))/H(\R)\simeq \Norm_{K}(H)/(H\cap K)
\end{equation}
\end{proposition}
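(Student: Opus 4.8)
The plan is to identify each side of \eqref{e:norm} with the kernel of a connecting homomorphism into the cohomology of $H$, and then to match the two kernels inside the common Weyl group $W^\sigma = W^\theta$. First I would record the elementary identifications. Since $H(\R)$ is Zariski-dense in $H$, an element of $G(\R)$ normalizes $H(\R)$ if and only if it normalizes $H$, so $\Norm_{G(\R)}(H(\R)) = N^\sigma = N(\R)$ and the left side of \eqref{e:norm} is $N^\sigma/H^\sigma$. On the other side $\Norm_K(H) = K\cap N = N^\theta$ and $H\cap K = H^\theta$, so the right side is $N^\theta/H^\theta$. The exact sequence \eqref{e:longexact} attached to $1\rightarrow H\rightarrow N\rightarrow W\rightarrow 1$, together with its $\theta$-analogue, then identifies
$$
N^\sigma/H^\sigma \simeq \ker\bigl(\delta_\sigma\colon W^\sigma \to H^1_\sigma(\Gamma,H)\bigr),\qquad N^\theta/H^\theta \simeq \ker\bigl(\delta_\theta\colon W^\theta \to H^1_\theta(\Ztwo,H)\bigr),
$$
where $\delta_\sigma(w) = [n^{-1}\sigma(n)]$ and $\delta_\theta(w) = [n^{-1}\theta(n)]$ for any lift $n\in N$ of $w$. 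By Lemma \ref{l:agree} the actions of $\sigma$ and $\theta$ on $W$ coincide, so $W^\sigma = W^\theta$ and the two kernels live in the same group; the proposition follows once I show they are equal.

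The key step is to choose the lift $n$ well. By Lemma \ref{l:agree} every element of $W$ has a representative in the compact real form $G_c(\R) = G^{\sigma_c}$, and such a representative automatically normalizes $H$, so I may take $n\in N^{\sigma_c}$. Then $\sigma_c(n)=n$ gives $\theta(n)=\sigma\sigma_c(n)=\sigma(n)$, so the single element $h_0 := n^{-1}\sigma(n) = n^{-1}\theta(n)$ represents both $\delta_\sigma(w)$ and $\delta_\theta(w)$; moreover a short computation shows $h_0 \in H^{\sigma_c} = H_c(\R)$. Now $w\in\ker\delta_\sigma$ if and only if $h_0\in (1-\sigma)H := \{x\sigma(x)^{-1}\mid x\in H\}$, and $w\in\ker\delta_\theta$ if and only if $h_0\in (1-\theta)H$, so I must compare these two coboundary sets on the subgroup $H_c(\R)$, on which $\sigma$ and $\theta$ agree.

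Here I would use the polar decomposition $H = H_c(\R)\cdot P$, where $P=\exp\bigl(i\,\mathrm{Lie}(H_c(\R))\bigr)$ is a torsion-free vector group and all of $\sigma,\theta,\sigma_c$ preserve the decomposition. Writing $x = up$ with $u\in H_c(\R)$, $p\in P$, the $P$-component of $x\sigma(x)^{-1}$ is $p\sigma(p)^{-1}$ and that of $x\theta(x)^{-1}$ is $p\theta(p)^{-1}$; since $P$ is torsion-free and $H_c(\R)\cap P = \{1\}$, lying in $H_c(\R)$ forces this component to vanish, leaving in both cases the $H_c(\R)$-component $u\tau_0(u)^{-1}$, where $\tau_0 = \sigma|_{H_c(\R)} = \theta|_{H_c(\R)}$. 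Hence $(1-\sigma)H\cap H_c(\R) = (1-\tau_0)H_c(\R) = (1-\theta)H\cap H_c(\R)$, so $h_0$ is a $\sigma$-coboundary exactly when it is a $\theta$-coboundary. This yields $\ker\delta_\sigma = \ker\delta_\theta$ as subgroups of $W^\sigma=W^\theta$, and hence the desired canonical isomorphism $N^\sigma/H^\sigma \simeq N^\theta/H^\theta$.

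I expect the main obstacle to be precisely this matching of the two connecting homomorphisms $\delta_\sigma$ and $\delta_\theta$, whose targets are a priori different groups. The device that resolves it is the choice of a representative $n$ in $G_c(\R)$, which forces $\sigma(n)=\theta(n)$ and thereby produces a single cocycle $h_0$ representing both boundaries. This is of course consistent with the canonical, $W$-equivariant isomorphism $H^1_\sigma(\Gamma,H)\simeq H^1_\theta(\Ztwo,H)$ of Proposition \ref{p:torus}: its compatibility with the connecting maps is exactly what the compact representative makes transparent, and one could alternatively phrase the argument as $\psi_H\circ\delta_\sigma = \delta_\theta$, where $\psi_H$ is that isomorphism.
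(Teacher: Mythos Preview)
Your argument is correct and follows the same overall strategy as the paper: identify each side as the kernel of the connecting map $W^\sigma\to H^1_\sigma(\Gamma,H)$ (resp.\ $W^\theta\to H^1_\theta(\Ztwo,H)$), invoke Lemma~\ref{l:agree} to get $W^\sigma=W^\theta$, and then match the two kernels. The only difference is in how the kernels are matched. The paper simply places the two exact sequences in a commutative square with the isomorphism of Proposition~\ref{p:torus} on the right, and reads off the result; you instead verify the compatibility $\psi_H\circ\delta_\sigma=\delta_\theta$ by hand, choosing lifts $n\in N^{\sigma_c}$ so that $n^{-1}\sigma(n)=n^{-1}\theta(n)\in H_c(\R)$ and then using the polar decomposition of $H$ to compare coboundaries. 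Your route is a bit longer but more self-contained (it does not invoke Proposition~\ref{p:torus}) and, in effect, supplies the check of commutativity that the paper leaves implicit.
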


\begin{proof}
It is easy to see $\Norm_{G(\R)}(H(\R))=N^\sigma$, so the left hand side of \eqref{e:norm} is $N^\sigma/H^\sigma$. 
The exact sequence $1\rightarrow H\rightarrow N\rightarrow W\rightarrow 1$ gives a short exact sequence
$1\rightarrow N^\sigma/H^\sigma\rightarrow W^\sigma\rightarrow H^1_\sigma(\Gamma,H)$.
Similarly the right hand side of \eqref{e:norm} is $N^\theta/H^\theta$, and we have
$1\rightarrow N^\theta/H^\theta\rightarrow W^\theta\rightarrow H^1_\theta(\Ztwo,H)$.
Together we have the commutative diagram

\begin{equation}
\xymatrix{
1\ar[r]&N^\sigma/H^\sigma\ar[d]\ar[r]&W^\sigma\ar[r]\ar[d]^=&H^1_\sigma(\Gamma,H)\ar[d]^{\simeq}\\
1\ar[r]&N^\theta/H^\theta\ar[r]&W^\theta\ar[r]&H^1_\theta(\Ztwo,H)
}
\end{equation}
which gives the isomorphism $N^\sigma/H^\sigma\simeq N^\theta/H^\theta$ as desired.
\end{proof}

We turn now to Matsuki's result on conjugacy of Cartan subgroups.

\begin{lemma}
\label{l:H1N}
Let $N$ be the normalizer of a $\theta,\sigma$-stable Cartan subgroup $H$.
Let $\Phi$ be the isomorphism of Theorem \ref{t:main}. 
There  is a canonical isomorphism 
$\Psi$ making the following diagram commute:

\begin{equation}
\label{e:H1N}
\xymatrix{
H^1_\sigma(\Gamma,N)\ar[d]_{\Psi}\ar[r]&H^1_\sigma(\Gamma,G)\ar[d]^\Phi\\
H^1_\theta(\Ztwo,N)\ar[r]&H^1_\theta(\Ztwo,G)
}
\end{equation}
\end{lemma}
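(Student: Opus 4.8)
The plan is to construct $\Psi$ one fibre at a time over the Weyl group cohomology, and to read off commutativity from the torus case already handled in Theorem \ref{t:main}. Both $H^1_\sigma(\Gamma,N)$ and $H^1_\theta(\Ztwo,N)$ sit in the exact sequences of pointed sets attached to $1\to H\to N\to W\to 1$, and by Lemma \ref{l:agree} the two structure maps have a common target: $H^1_\sigma(\Gamma,W)=H^1_\theta(\Ztwo,W)$ canonically, since $\sigma$ and $\theta$ act on $W$ in the same way. So I would first check that the two maps to $W$-cohomology have the same image, and then define $\Psi$ over each class of $H^1(\Gamma,W)$ separately.

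Fix such a class, represented by $n\in N^{-\sigma}$ projecting to $w\in W^{-\sigma}=W^{-\theta}$, and choose a $\theta$-cocycle $m\in N^{-\theta}$ with the same image $w$. Using the twisting formalism of Section \ref{s:twisting}, twisting by $n$ replaces $\sigma$ by the real form $\sigma_n=\int(n)\circ\sigma$ (an involution, as $n\sigma(n)=1$) and identifies the fibre of $H^1_\sigma(\Gamma,N)\to H^1_\sigma(\Gamma,W)$ over this class with the image of $H^1_{\sigma_n}(\Gamma,H)$ in $H^1_{\sigma_n}(\Gamma,N)$, a quotient $H^1_{\sigma_n}(\Gamma,H)/W_i$ by Lemma \ref{l:Wi} applied to $\sigma_n$; twisting by $m$ does the same on the $\theta$ side. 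Now $H$ is a $\sigma_n$-stable (resp.\ $\theta_m$-stable) Cartan subgroup, so Proposition \ref{p:torus}, made $W$-equivariant by Lemma \ref{l:agree}, supplies a canonical bijection $H^1_{\sigma_n}(\Gamma,H)/W_i\simeq H^1_{\theta_m}(\Ztwo,H)/W_i$ between the two fibres. Assembling these over all classes of $H^1(\Gamma,W)$ defines $\Psi$, and the twisting equivalence shows it is a bijection of pointed sets.

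For commutativity of \eqref{e:H1N} I would pass to these torus representatives: on each fibre the horizontal maps to $G$ factor through the injections $H^1_\sigma(\Gamma,H)/W_i\hookrightarrow H^1_\sigma(\Gamma,G)$ and $H^1_\theta(\Ztwo,H)/W_i\hookrightarrow H^1_\theta(\Ztwo,G)$ of Propositions \ref{p:sigma} and \ref{p:theta} (for the possibly non-fundamental $H$), and the vertical maps are exactly the torus isomorphism of Proposition \ref{p:torus} underlying $\Phi$ in \eqref{diagram}. So commutativity reduces to that of the square defining $\Phi$, now run for $H$ and for each twist, and independence of the choices of $n$, $m$ and the representing cocycles follows because altering them changes classes only by twisted conjugation $x\mapsto xg\sigma(x\inv)$ and $x\mapsto xg\theta(x\inv)$, which is absorbed into the cohomology, exactly as at the end of the proof of Theorem \ref{t:main}.

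The hard part will be the bookkeeping of the twist correspondence: one must verify that $\theta_m$ really is a Cartan involution corresponding to $\sigma_n$ whenever $m$ and $n$ project to the same $w\in W^{-\sigma}=W^{-\theta}$, so that Proposition \ref{p:torus} and Lemma \ref{l:agree} apply to the twisted pair $(\sigma_n,\theta_m)$, and that this matching is canonical. This is delicate precisely because, as noted in Remark \ref{r:Wi}, the relevant $W_i$-action on $H^1(\Gamma,H)$ is sensitive to the restriction of the involution to $N$, not just to $H$; once the twisted forms are correctly matched, everything else is the torus computation of Proposition \ref{p:torus} together with the $W$-equivariance of Lemma \ref{l:agree}.
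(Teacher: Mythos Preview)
Your approach is essentially the same as the paper's: fibrate both $H^1_\sigma(\Gamma,N)$ and $H^1_\theta(\Ztwo,N)$ over the common target $H^1_\sigma(\Gamma,W)=H^1_\theta(\Ztwo,W)$, handle the fibre over the trivial class via the torus isomorphism $H^1_\sigma(\Gamma,H)/W_i\simeq H^1_\theta(\Ztwo,H)/W_i$ from Proposition~\ref{p:torus} and Lemma~\ref{l:agree}, and reduce the general fibre to this case by twisting on both sides. The paper's proof is organized identically, with a commutative diagram recording the twist $F_\sigma([w],N)\simeq F_{\sigma'}(1,N)$ and its $\theta$-analogue.

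One remark: you are more explicit than the paper about the delicate point, namely that after twisting by $n\in N^{-\sigma}$ and $m\in N^{-\theta}$ with common image $w$, one needs $\theta_m$ to correspond to $\sigma_n$ so that Proposition~\ref{p:torus} applies to the pair $(\sigma_n,\theta_m)$. The paper simply writes ``similar comments apply in the $\theta$ setting'' and presents the final diagram without addressing this matching. Your identification of this as ``the hard part'' is accurate; the cleanest way through is to use the observation in the proof of Lemma~\ref{l:agree} that every Weyl group element has a representative in $G^{\sigma_c}$, which lets one arrange the twisting elements compatibly.
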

\medskip

\begin{proof}
The map $N\rightarrow W$ induces a map $H^1_\sigma(\Gamma,N)\rightarrow H^1_\sigma(\Gamma,W)$. 
For $\xi\in H^1_\sigma(\Gamma,W)$ let $F_\sigma(\xi,N)$ be the fiber over $\xi$,
so $H^1_\sigma(\Gamma,N)$ is the disjoint union of the $F_\sigma(\xi,N)$. 
For $\xi\in H^1_\theta(\Ztwo,W)$ define $ F_\theta(\xi,N)\subset H^1_\theta(\Ztwo,N)$ similarly.

We proceed one fiber at a time. We identify $H^1_\sigma(\Gamma,W)$ and $H^1_\theta(\Ztwo,W)$ (cf. Lemma \ref{l:agree}).
It is enough to show that for all $\xi\in H^1_\sigma(\Gamma,W)$ 
such that $F_\sigma(\xi,N)$ is nonempty, there is an isomorphism $\Psi_\xi$ making this diagram commute:
\begin{equation}
\xymatrix{
F_\sigma(\xi,N)\ar[d]_{\Psi_\xi}\ar[r]^{\iota_\sigma}&H^1_\sigma(\Gamma,G)\ar[d]^{\Phi}\\
 F_\theta(\xi,N)\ar[r]^{\iota_\theta}&H^1_\theta(\Ztwo,G)
}
\end{equation}
It is enough to show the two fibers on the left are isomorphic, and
the images of $\Phi\circ\iota_\sigma$ and $\iota_\theta$ agree.

First take $\xi=1$. 
By Proposition \ref{p:sigma}, and the discussion preceding it, 
$$
F_\sigma(1,N)\simeq H^1_\sigma(\Gamma,H)/W_i,\,
F_\theta(1,N)\simeq H^1_\theta(\Ztwo,H)/W_i.
$$
These are isomorphic by Proposition \ref{p:torus}
and Lemma \ref{l:agree}.  The
commutativity of the diagram is clear since the horizontal maps are induced by inclusion.

We treat the general fiber by twisting.
Suppose $w\in W^{-\sigma}$ and assume $F_\sigma([w],N)$ is nonempty.
Therefore there exists 
$y\in N^{-\sigma}$ mapping to $w$. 
Let $\sigma'=\int(y)\circ\sigma$.
Twisting by $y$ (see Section \ref{s:twisting}) defines
an isomorphism $H^1_{\sigma'}(\Gamma,N)\simeq H^1_\sigma(\Gamma,N)$, taking 
$F_{\sigma'}(1,N)$ to $F_\sigma(w,N)$.
It also gives an isomorphism $H^1_{\sigma'}(\Gamma,G)\simeq
H^1_\sigma(\Gamma,G)$.

Similar comments apply in  the $\theta$ setting. Putting these
together we have the following commutative diagram, where the central
square comes from the previous discussion with $\sigma',\theta'$ in
place of $\sigma,\theta$, and $\Psi_\xi$ is defined to make the diagram commute.

 \begin{equation}
 \xymatrix{
 F_\sigma([w],N)\ar[d]_{\Psi_\xi}\ar[r]^{\simeq}&F_{\sigma'}(1,N)\ar[d]\ar[r]^{\iota_\sigma}&H^1_{\sigma'}(\Gamma,G)\ar[d]
 \ar[r]^{\simeq}&H^1_\sigma(\Gamma,G)\ar[d]^{\Phi}\\
  F_\theta([w],N)\ar[r]^{\simeq}&F_{\theta'}(1,N)\ar[r]^{\iota_\theta}&H^1_{\theta'}(\Ztwo,G)
 \ar[r]^{\simeq}&H^1_\theta(\Ztwo,G)\\
 }
 \end{equation}
\end{proof}

Let $H^1_\sigma(\Gamma,N)_0$ be the kernel of the map 
$H^1_\sigma(\Gamma,N)\rightarrow H^1_\sigma(\Gamma,G)$. 
It is well known that  that the set of $G(\R)$-conjugacy classes of 
Cartan subgroups defined over $\R$ is parametrized by
 $H^1_\sigma(\Gamma,N)_0$ as follows.
Suppose  $n\in N^{-\sigma}$, and 
$[n]$ is trivial in $H^1_\sigma(\Gamma,G)$.
Write $n=g\sigma(g\inv)$ for some $g\in G$; the map takes $[n]$ to
$gHg\inv$. 
It is straightforward to see this gives the stated bijection.

Analogous statements hold for $\theta$: the kernel 
$H^1_\theta(\Ztwo,H)_0$ of 
$H^1_\theta(\Ztwo,H)\rightarrow H^1_\theta(\Ztwo,G)$
parametrizes the $\theta$-stable Cartan subgroups.

Matsuki's result on Cartan subgroups (\cite{matsuki},\cite[Proposition
6.18]{av1}) is now immediate.

\begin{proposition}
\label{p:matsuki}
There is a bijection between $G(\R)$-conjugacy classes of $\sigma$-stable 
Cartan subgroups, and $K$-conjugacy classes of $\theta$-stable 
Cartan subgroups.
\end{proposition}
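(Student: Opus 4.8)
The plan is to deduce the bijection directly from Lemma \ref{l:H1N} together with the two parametrizations of conjugacy classes of Cartan subgroups recorded just above the statement. The key observation is that the isomorphisms $\Phi$ and $\Psi$ are maps of pointed sets carrying the distinguished (trivial) class to the distinguished class, so the commutative square of Lemma \ref{l:H1N} must identify the fibers over the trivial class on either side.

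First I would recall the parametrizations. Write $H^1_\sigma(\Gamma,N)_0$ for the kernel of $H^1_\sigma(\Gamma,N)\rightarrow H^1_\sigma(\Gamma,G)$ and $H^1_\theta(\Ztwo,N)_0$ for the analogous kernel in the $\theta$ picture. As explained before the statement, $H^1_\sigma(\Gamma,N)_0$ is in canonical bijection with the set of $G(\R)$-conjugacy classes of $\sigma$-stable Cartan subgroups of $G$ (sending $[n]$, with $n=g\sigma(g\inv)$, to $gHg\inv$), and $H^1_\theta(\Ztwo,N)_0$ is in canonical bijection with the $K$-conjugacy classes of $\theta$-stable Cartan subgroups.

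Next I would invoke Lemma \ref{l:H1N}: the isomorphism $\Psi\colon H^1_\sigma(\Gamma,N)\simeq H^1_\theta(\Ztwo,N)$ fits into a commutative square with the isomorphism $\Phi\colon H^1_\sigma(\Gamma,G)\simeq H^1_\theta(\Ztwo,G)$ of Theorem \ref{t:main}. Since $\Phi$ is base-point preserving (the trivial class maps to the trivial class), the commutativity of the square forces $\Psi$ to carry $H^1_\sigma(\Gamma,N)_0$ onto $H^1_\theta(\Ztwo,N)_0$; being the restriction of an isomorphism, it does so bijectively. Composing the three bijections
$$
\{\sigma\text{-stable Cartans}\}/G(\R)\;\simeq\;H^1_\sigma(\Gamma,N)_0\;\overset{\Psi}{\simeq}\;H^1_\theta(\Ztwo,N)_0\;\simeq\;\{\theta\text{-stable Cartans}\}/K
$$
yields the desired (canonical) bijection.

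There is no serious obstacle remaining, since the substantive input, the isomorphism $\Psi$ of Lemma \ref{l:H1N} and its compatibility with $\Phi$, has already been established. The only point demanding a little care is that $\Phi$, $\Psi$, and the horizontal maps are morphisms of pointed sets rather than group homomorphisms, so that ``kernel'' must be read as the preimage of the distinguished class; one should therefore verify that $\Phi$ sends the trivial class to the trivial class (immediate from its explicit description, where $h=1$ may be paired with $h'=1$) in order to conclude that the square matches the two subsets $(\,\cdot\,)_0$.
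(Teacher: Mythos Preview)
Your proposal is correct and follows essentially the same route as the paper: invoke the commutative square of Lemma~\ref{l:H1N}, use that $\Phi$ is basepoint-preserving so that $\Psi$ restricts to a bijection $H^1_\sigma(\Gamma,N)_0\simeq H^1_\theta(\Ztwo,N)_0$, and then compose with the two parametrizations of Cartan subgroups recorded just before the statement. The paper's proof is a single sentence to this effect; your added remark that the maps are only morphisms of pointed sets (so one must check $\Phi$ preserves the trivial class) is a helpful clarification but not a departure from the paper's argument.
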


\begin{proof}
The isomorphism $\Psi$ of Proposition \ref{l:H1N} restricts to 
an isomorphism $H^1_\sigma(\Gamma,N)_0\simeq  H^1_{\theta}(\Ztwo,N)_0$.
\end{proof}

As a corollary of the proof of Lemma  \ref{l:H1N} we obtain a
description of $H^1(\Gamma,N)$. 

\begin{proposition}
Suppose $H$ is a $\sigma$-stable Cartan subgroup. Let
$S$ be a set of representatives 
of the $G(\R)$-conjugacy classes of  $\sigma$-stable Cartan
subgroups. For $H'\in S$ let  $W(H')_i$ be the imaginary Weyl group of $H'$. 
$$
H^1_{\sigma}(\Gamma,N)\simeq \bigcup_{H'\in S} H^1_{\sigma}(\Gamma,H')/W(H')_i
$$
If $H$ is $\theta$-stable the analogous result holds:
$$
H^1_\theta(\Ztwo,N)\simeq \bigcup_{H'\in S} H^1_\theta(\Ztwo,H')/W(H')_i
$$
Using the bijection of Proposition \ref{p:matsuki} these two sets are termwise isomorphic.
\end{proposition}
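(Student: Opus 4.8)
The plan is to read the decomposition directly off the fibre analysis in the proof of Lemma \ref{l:H1N}. Recall that the map $N\to W$ writes $H^1_\sigma(\Gamma,N)=\bigsqcup_\xi F_\sigma(\xi,N)$ as a disjoint union over the classes $\xi\in H^1_\sigma(\Gamma,W)$ with nonempty fibre, and that for each such $\xi=[w]$, choosing $y\in N^{-\sigma}$ over $w$ and twisting by $y$ identifies $F_\sigma([w],N)\simeq H^1_{\sigma'}(\Gamma,H)/W_i'$, where $\sigma'=\int(y)\circ\sigma$ and $W_i'$ is the imaginary Weyl group for $\sigma'$. It therefore suffices to match the nonempty fibres with the elements of $S$ and to identify each fibre with the corresponding term.

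To set up the matching I would attach to each $\sigma$-stable Cartan subgroup $H'$ its \emph{type}: writing $H'=gHg\inv$, the element $n_0=g\inv\sigma(g)$ lies in $N^{-\sigma}$, and its image $[w]\in H^1_\sigma(\Gamma,W)$ is independent of the choices and of the $G(\R)$-conjugacy class of $H'$. This defines a map $\Lambda$ from $S$ to the set of nonempty fibres. Injectivity of $\Lambda$ is the assertion that two $\sigma$-stable Cartan subgroups of the same type are $G(\R)$-conjugate; I would prove it by checking that equality of types is exactly stable conjugacy (a short coset computation, since $\int(g)$ being defined over $\R$ forces $g\inv\sigma(g)\in H$) and then invoking Lemma \ref{l:stablecartans}. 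For a fibre in the image of $\Lambda$, coming from $H'$, I would take the distinguished lift $y=n_0$, which is trivial in $H^1_\sigma(\Gamma,G)$, so that $\int(g)$ conjugates $\sigma'=\int(n_0)\circ\sigma$ to $\sigma$ and carries $H$ to $H'$. This gives an isomorphism of real tori $(H,\sigma')\simeq(H',\sigma)$ intertwining the two imaginary Weyl group actions, whence $F_\sigma([w],N)\simeq H^1_{\sigma'}(\Gamma,H)/W_i'\simeq H^1_\sigma(\Gamma,H')/W(H')_i$, as required.

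The main obstacle is the surjectivity of $\Lambda$: I must show every nonempty fibre is the type of a genuine $\sigma$-stable Cartan subgroup of $(G,\sigma)$, equivalently that whenever $w$ lifts to some $y\in N^{-\sigma}$ one can in fact choose a lift of the form $g\inv\sigma(g)$, trivial in $H^1_\sigma(\Gamma,G)$. This is a real phenomenon, not a formality: for an arbitrary lift $y$ the twisted torus $(H,\int(y)\circ\sigma)$ is a priori only a Cartan subgroup of the inner form $(G,\int(y)\circ\sigma)$, and inner forms need not share Cartan types (for instance a split torus occurs in $SL(2,\R)$ but not in $SU(2)$). What rescues the statement is that the ``wrong'' fibres are empty, and establishing this emptiness is the crux. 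I would prove it from the structure theory of $\sigma$-stable Cartan subgroups --- reducing, as in the surjectivity argument of Proposition \ref{p:sigma}, to the compact (fundamental) part and propagating types by Cayley transforms --- to conclude that every twisted involution liftable to $N^{-\sigma}$ is realised by an honest Cartan subgroup; this makes $\Lambda$ a bijection and so identifies $H^1_\sigma(\Gamma,N)$ with $\bigcup_{H'\in S}H^1_\sigma(\Gamma,H')/W(H')_i$.

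Finally, the $\theta$-statement follows from the identical argument with $(Q,L,V,\theta)$ in place of $(P,M,U,\sigma)$, using Lemma \ref{l:thetalevis} in place of Lemmas \ref{l:reallevis} and \ref{l:stablecartans}. For the termwise comparison I would index both unions by the same set $S$ through Matsuki's bijection (Proposition \ref{p:matsuki}), which matches the $G(\R)$-class of a $\sigma$-stable Cartan $H'$ with the $K$-class of a $\theta$-stable Cartan; Proposition \ref{p:torus} together with Lemma \ref{l:agree} then yields $H^1_\sigma(\Gamma,H')/W(H')_i\simeq H^1_\theta(\Ztwo,H')/W(H')_i$ term by term, completing the proof.
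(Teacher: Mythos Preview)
Your fibre-by-fibre strategy is the right one, and you have correctly isolated the crux: surjectivity of $\Lambda$, i.e.\ the assertion that every nonempty fibre $F_\sigma([w],N)$ is the type of an honest $\sigma$-stable Cartan subgroup of $(G,\sigma)$. Unfortunately this assertion is false, so your proposed Cayley-transform argument cannot succeed --- and in fact the Proposition as stated is not correct.

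A clean counterexample is the compact form, where $\theta=1$. Then $g^{-1}\theta(g)=1$ for every $g$, so \emph{every} $\theta$-stable Cartan subgroup has type $[1]$; in particular $|S|=1$ and the image of $\Lambda$ is the single class $[1]$. But nonempty fibres over nontrivial $[w]$ can exist. Take $G=Sp(4,\C)$ with the diagonal torus $H$ and the standard symplectic form $J=\begin{pmatrix}0&I_2\\-I_2&0\end{pmatrix}$; then
\[
P=\begin{pmatrix}0&1&&\\1&0&&\\&&0&1\\&&1&0\end{pmatrix}\in N,\qquad P^2=I,
\]
and $P$ maps to the short reflection $s_{\alpha_1}\in W$. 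So $F_\theta([s_{\alpha_1}],N)\neq\emptyset$. Twisting by $P$ gives $\theta'=\int(P)$, which is the Cartan involution of the \emph{different} real form $Sp(1,1)$; one computes $H^1_{\theta'}(\Ztwo,H)/W_i'$ is a single point. Hence $|H^1_\theta(\Ztwo,N)|=|H_2/W|+1=3+1=4$, whereas the right-hand side of the Proposition is $|H_2/W|=3$. (The same phenomenon already occurs for compact $PGL(2)$: there $|H^1_\theta(\Ztwo,N)|=3$ but the right-hand side is $2$.)

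Your own diagnosis --- that the twisted torus $(H,\int(y)\circ\sigma)$ is a priori only a Cartan of the inner form $(G,\int(y)\circ\sigma)$ --- is exactly what goes wrong; the ``rescue'' you propose, that the offending fibres are empty, does not hold. What the proof of Lemma~\ref{l:H1N} actually yields is a decomposition of $H^1_\sigma(\Gamma,N)$ indexed by the nonempty fibres in $H^1_\sigma(\Gamma,W)$, with each piece isomorphic to $H^1_{\sigma'}(\Gamma,H)/W_i'$ for the corresponding inner twist $\sigma'$; the indexing set is in general strictly larger than $S$. The paper's claim that the stated form is an immediate corollary appears to be an oversight.
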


\sec{Strong real forms and   $H^1(\Gamma,G)$}
\label{s:galois}

Suppose $G$ is defined over $\R$. Part of the long exact cohomology sequence, associated to the exact sequence $1\rightarrow
Z\rightarrow G\rightarrow\Gad\rightarrow 1$, is
\begin{equation}
\label{e:exact1}
H^1(\Gamma,G)\rightarrow H^1(\Gamma,\Gad)\rightarrow H^2(\Gamma,Z).
\end{equation}
Recall (Example \ref{ex:realforms})
$$
H^1(\Gamma,\Gad)=\oInv_\sigma(G)/G=\{\text{real forms of $G$ inner to $\sigma$}\}.
$$
We seek a similar description of 
$H^1(\Gamma,G)$. This is straightforward if the 
the map to $H^1(\Gamma,\Gad)$ in
\eqref{e:exact1}
is surjective. In general we need to to replace $H^1(\Gamma,G)$ with a
bigger space which maps surjectively to $H^1(\Gamma,\Gad)$. This is
provided by the theory of strong real forms \cite{abv}. We 
follow the equivalent version described in \cite{algorithms}
(see Remark \ref{rem:equivalent}).
We work in the context of $H^1(\Ztwo,G)$, and then use Theorem
\ref{t:main} to state the results in terms of $H^1(\Gamma,G)$. 

For now assume we are given  only a complex reductive group $G$.
We make use of the exact sequence
\begin{equation}
\label{e:exact}
1\rightarrow\Int(G)\rightarrow\Aut(G)\rightarrow\Out(G)\rightarrow 1
\end{equation}
where $\Aut(G)$ is the (holomorphic) automorphisms of $G$, $\Int(G)$ are the inner ones,
and $\Out(G)=\Aut(G)/\Int(G)$.
We say two automorphisms are inner to
each other, or in the same inner class, if 
they have the same image in $\Out(G)$.
Thus an inner class is determined by an involution $\tau\in\Out(G)$,
and we refer to this as the inner class of $\tau$.
Note that the action of $\Aut(G)$ restricted to  $Z$ factors to $\Out(G)$.

We say two real forms $\sigma,\sigma'$ are in the same inner class if
$\sigma'\circ\sigma\inv\in \Int(G)$. Equivalently, if $\theta,\theta'$ are
corresponding Cartan involutions, then $\theta$ and $\theta'$ are in
the same inner class. See example \ref{ex:realforms}.

We take as our starting point an involution $\tau\in\Out(G)$, which determines 
an inner class of real forms. Let 
$\Inv_\tau(G)\subset \Inv(G)$ be the involutions in the
inner class of $\tau$. 
There are two natural choices for a basepoint making $\Inv_\tau(G)/G$ 
a pointed set.
 One is the quasisplit (most split) real form in the
inner class. Because of our focus on $\theta$, rather than $\sigma$,
we prefer to choose the quasicompact (most compact) form:
a real form is said to be quasicompact if its Cartan involution 
fixes a pinning datum $(H,B,\{X_\alpha\})$ 
\cite{springer_book}, \cite[Section 2.1]{algorithms}. 
Each inner class contains a unique quasicompact real form, whose Cartan involution is denoted $\thetaqc$.
The inner class is said to be of equal rank, or compact, if any
of the following equivalent conditions hold: $\tau=1$; $\thetaqc=1$; the inner
class contains the compact form of $G$;
$\text{rank}(G)=\text{rank}(G^\theta)$ for all $\theta\in\Inv_\tau(G)$.

We fix once and for all a pinning datum
$(H,B,\{X_\alpha\})$ 
This defines a
splitting of \eqref{e:exact}, taking $\Out(G)$ to 
the elements of $\Aut(G)$ which preserve the pinning. 
Then $\thetaqc$ is the image of $\tau$ under the splitting. 
The associated real form is quasicompact.

Let 
$$
\Gext=G\rtimes\Ztwo=\langle G,\delta\rangle\quad (\delta^2=1,\, \delta g\delta\inv=\thetaqc(g)).
$$

\begin{definition}
\label{d:strongreal}
A strong involution is an element $x\in \G\delta$ satisfying $x^2\in
Z$. A strong real form is a $G$-conjugacy class of strong involutions.  
Let $\SRF_\tau(G)$ be the set of strong real forms (in the inner class
of $\tau$):
\begin{equation}
\label{e:srf}
\SRF_\tau(G)=\{x\in G\delta\mid x^2\in Z(G)\}/G.
\end{equation}

Set $\zinv(x)=x^2\in
Z^\tau$. This is invariant under conjugation and so defines 
a map $\zinv:\SRF_\tau(G)\rightarrow Z^\tau$. We refer to $\zinv$ as the central invariant of a strong real form.
\end{definition}

If $x$ is a strong involution define $\theta_x\in\Inv(G)$ by 
$\theta_x(g)=xgx\inv$ ($g\in G$).
The map  $x\rightarrow\theta_x$ factors to a surjection
$$
\begin{aligned}
\SRF_\tau(G)\twoheadrightarrow \Inv_\tau(G)/G=\{\text{real forms in
  the inner class of }\tau\}.
\end{aligned}
$$
The surjectivity statement of Proposition \ref{p:theta} amounts to the same being true when
restricted to strong involutions in $H\delta$.

\begin{lemma}
\label{l:strongreal}
If $\theta\in\Inv_\tau(G)$ then $\theta=\int(x)$ for
some strong involution $x$. A conjugate of $\theta$ is equal to $\theta_x$ for some $x\in H\delta$. 
\end{lemma}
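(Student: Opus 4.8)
The plan is to prove the two sentences separately; the first is formal and the second is a restatement of the surjectivity already contained in Proposition~\ref{p:theta}. For the first sentence I would use only the definition of the inner class: since $\theta$ and $\thetaqc$ both lie in $\Inv_\tau(G)$, they have the same image in $\Out(G)$, so there is $g\in G$ with $\theta=\int(g)\circ\thetaqc$. Setting $x=g\delta\in G\delta\subset\Gext$, a direct computation in $\Gext$ gives $xhx\inv=g\,\thetaqc(h)\,g\inv=\int(g)(\thetaqc(h))$ for $h\in G$, so $\theta_x=\int(x)=\theta$ as automorphisms of $G$. Finally $x^2=g\delta g\delta=g\,\thetaqc(g)\in G$ and $\int(x^2)=\theta^2=\mathrm{id}$, which forces $x^2\in Z$; hence $x$ is a strong involution with $\theta_x=\theta$, as required.

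For the second sentence I must show that the $G$-conjugacy class of $x$ meets $H\delta$, and I would obtain this by transporting the surjectivity proof of Proposition~\ref{p:theta} to the component $G\delta$. Since $x^2\in Z$ is semisimple the element $x$ is semisimple, so $\Cent_G(x)=G^{\theta_x}$ is reductive; taking a maximal torus $T_K$ of its identity component and setting $S=\Cent_G(T_K)$ produces a $\theta_x$-stable Cartan subgroup which is fundamental for $\theta_x$, and because $T_K\subset G^{\theta_x}$ is fixed pointwise one has $x\in\Cent_{\Gext}(T_K)\cap G\delta=N_G(S)\delta$. Conjugating the maximal torus $S$ to the pinning torus $H$, I may assume $x=n\delta$ with $n\in N_G(H)$. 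The relation $x^2\in Z\subset H$ says that the image $\bar n\in W$ is a twisted involution, and conjugating within $N_G(H)$ alters $\bar n$ by twisted conjugation; so the whole claim comes down to showing that $\bar n$ can be rendered trivial, which places $x$ in $H\delta$.

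The hard part will be exactly this last reduction, and it is precisely what ``amounts to the surjectivity of Proposition~\ref{p:theta}.'' I would carry it out by the two moves used there. Writing $S=TA$ for the decomposition into the compact ($\theta_x$-fixed) and split ($\theta_x$ acting by inverse) subtori, I would use that $A$ is connected and $\theta_x$ acts on it by inversion, so that every element of $A$ has the form $b\,\theta_x(b)\inv$; a conjugation by such $b$ then absorbs the split contribution and reduces to the case that the remaining data is $\theta_x$-compact. Then, since any two maximally compact $\theta_x$-stable Cartan subgroups are $K$-conjugate, I can move the compact torus into $H$; as one is then merely conjugating a semisimple element of the component into the torus $H\delta$, there is no residual Weyl obstruction and $x\in H\delta$ results. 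The only bookkeeping point requiring care is fixing a base strong involution with the prescribed central invariant $z=x^2$ inside $H\delta$: the quasicompact base point $\delta$ serves for $z=1$, and the general case is reached from it by the twisting of Section~\ref{s:twisting}, under which the inclusion $H\delta\hookrightarrow G\delta$ induces precisely the map $\phi$ of~\eqref{e:theta}.
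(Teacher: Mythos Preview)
Your argument for the first sentence is correct and is exactly what one would do.

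For the second sentence you have the right destination---Proposition~\ref{p:theta}---but the route you take has a genuine gap. After conjugating $S$ to $H$ you have $x=n\delta$ with $n\in N_G(H)$, and you want to kill $\bar n\in W$. You then invoke the $TA$ decomposition (for $\theta_x$) and say ``a conjugation by such $b$ then absorbs the split contribution.'' But conjugating $x$ by any $b\in H$ gives $bxb^{-1}=bn\thetaqc(b^{-1})\delta$, which lies in the \emph{same} coset $nH\delta$; the image $\bar n$ is unchanged. The $TA$ move in Proposition~\ref{p:theta} absorbs the $A$-part of an element \emph{of} the torus, not a Weyl group obstruction, so it does not apply here. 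Your final twisting remark is then circular: you need a base point $x_0\in H\delta$ with $x_0^2=z$, and producing such an $x_0$ is exactly the content of the statement. (There is also a minor slip: $\Cent_{\Gext}(T_K)\cap G\delta$ is the single coset $Sx$, not $N_G(S)\delta$; your conclusion that $x$ normalizes $S$ is nonetheless correct.)

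The paper's intended argument is shorter and avoids all of this: rerun the surjectivity proof of Proposition~\ref{p:theta} with $\thetaqc$ (not $\theta_x$) playing the role of $\theta$. Write $x=g\delta$ with $g\thetaqc(g)=z\in Z^\tau$; the Jordan decomposition step and the reduction to a $\thetaqc$-stable torus go through verbatim with $z$ in place of $1$. The pinning Cartan $H$ is fundamental for $\thetaqc$ (since $\thetaqc$ preserves $B$, there are no real roots), so the final step---conjugating the compact part into $H$ by an element of $G^{\thetaqc}$---lands $g$ in $H$ and hence $x$ in $H\delta$. Working with $\thetaqc$ rather than $\theta_x$ is what makes $H$ the correct target from the start and eliminates the Weyl group obstruction you encountered.
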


Therefore is enough to compute 
$H^1_{\theta_x}(\Ztwo,G)$ for all strong involutions $x\in H\delta$.
By Proposition \ref{p:theta} (using the fact that $H$ is a fundamental Cartan subgroup with respect to $\theta_x$)
\begin{subequations}
\renewcommand{\theequation}{\theparentequation)(\alph{equation}}  
\begin{equation}
\begin{aligned}
H^1_{\theta_x}(\Ztwo,G)&\simeq
H^1_{\theta_x}(\Ztwo,H)/W_i\\
&=[H^{-\theta_x}/(1+\theta_x)H]/W_i
\end{aligned}
\end{equation}
Note  that, since $\theta_x|_H=\thetaqc$, the numerator is
is independent of $x$,
although the action of $W_i$ depends on $x$ (see Remark \ref{r:Wi}).

We rewrite this expression by twisting by $x$.
This is analogous to twisting of cohomology as in
Section \ref{s:twisting} 
(that this is more than an analogy is explained in Section 
\ref{s:rigid}). 
Let $z=\zinv(x)\in Z^\tau$, and consider the map  $h\rightarrow hx\in H\delta$. 
It is easy to see this gives a bijection between (a) and
\begin{equation}
[\{y\in H\delta\mid y^2=z\}/\negthinspace\sim\negthinspace H]/W_i
\end{equation}
The main point is that now $H$, and especially $W_i$, 
are  acting on $H\delta$ by ordinary conjugation.
We make the  action of $W_i$ explicit. 
If $w\in W_i$, choose $n\in
N$ mapping to $w$, and take $y\in H\delta$ to $nyn\inv$. 
Writing $y=h\delta$, $nyn\inv=(nhn\inv)n\delta(n\inv)\delta$, which is
in $H\delta$ since $w\in W_i$. Furthermore this is easily seen to be
independent of the choice of $n$.
\end{subequations}
This proves the following Proposition.

\begin{lemma}
\label{l:H1Z2}
Suppose $x\in H\delta$ is a strong involution. Let $z=\zinv(x)\in Z^\tau$. Then there
is a bijection
$$
H^1_{\theta_x}(\Ztwo,G)\leftrightarrow[\{y\in H\delta\mid
y^2=z\}/\negthinspace\sim\negthinspace H]/W_i
$$
The right hand side is precisely the strong real forms with central invariant $z$.
This only depends on $z$, 
and its order only depends only on the image of $z$ in 
$Z^\tau/(1+\tau)Z$.
\end{lemma}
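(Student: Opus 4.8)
The plan is to assemble the displayed bijection from the computation that precedes the statement, and then to verify the three supplementary claims: the identification of the right-hand side with strong real forms, its independence of the chosen $x$, and the order formula. The bijection itself is essentially in hand. Since $H$ is a fundamental Cartan subgroup for $\theta_x$, Proposition \ref{p:theta} gives $H^1_{\theta_x}(\Ztwo,G)\simeq H^1_{\theta_x}(\Ztwo,H)/W_i$, and the map $h\mapsto hx$ carries this to $[\{y\in H\delta\mid y^2=z\}/\negthinspace\sim\negthinspace H]/W_i$, the $W_i$-action now being ordinary conjugation on $H\delta$.

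First I would identify the right-hand side with the set of strong real forms of central invariant $z$. By Definition \ref{d:strongreal} the latter are the $G$-conjugacy classes in $\{y\in G\delta\mid y^2=z\}$. Surjectivity of the natural map from the $H\delta$-representatives is precisely Lemma \ref{l:strongreal}, which says every such strong involution is $G$-conjugate to one in $H\delta$. For injectivity I would show that two elements of $H\delta$ of square $z$ are $G$-conjugate exactly when they are conjugate under $H$ and then $W_i$; this is the injectivity half of Proposition \ref{p:theta}, transported through the twist $h\mapsto hx$. I expect this matching to be the main obstacle, since one must check that the ordinary conjugation action of $W_i$ on $H\delta$ corresponds, under the twist, to the twisted $W_i$-action on $H^1_{\theta_x}(\Ztwo,H)$ used in Proposition \ref{p:theta} (cf. Remark \ref{r:Wi}), and that no conjugacy relations are gained or lost in passing between $G\delta$ and $H\delta$.

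Granting this identification, independence of the right-hand side from the particular choice of $x$ is immediate: the set $\{y\in H\delta\mid y^2=z\}$ depends only on $z$, and since $\theta_x|_H=\thetaqc$ is the same for every strong involution $x\in H\delta$, the imaginary root system and hence $W_i$ are also independent of $x$.

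Finally, for the assertion that the order depends only on the image of $z$ in $Z^\tau/(1+\tau)Z$, I would twist by a central element. For $h\in Z$ the product $hx$ is again a strong involution, and $\int(hx)=\int(x)$ because $h$ is central, so $\theta_{hx}=\theta_x$ and therefore $H^1_{\theta_{hx}}(\Ztwo,G)=H^1_{\theta_x}(\Ztwo,G)$ has the same cardinality. On the other hand $\zinv(hx)=(hx)^2=h\,\thetaqc(h)\,x^2=h\tau(h)\,z$, using that $\thetaqc$ acts on $Z$ through its image $\tau\in\Out(G)$. As $h$ runs over $Z$ the factor $h\tau(h)$ runs over all of $(1+\tau)Z$, so the cardinality of the set attached to $z$ agrees with that attached to any representative of the same coset in $Z^\tau/(1+\tau)Z$, which is the claim.
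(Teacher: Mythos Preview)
Your proposal is correct and follows the paper's own approach: the displayed bijection is assembled exactly as the paper does it (Proposition~\ref{p:theta} for the fundamental $H$, then the twist $h\mapsto hx$ making the $W_i$-action ordinary conjugation), and you supply arguments for the three supplementary claims that the paper leaves largely implicit. Your identification of the right-hand side with the strong real forms of invariant $z$ is slightly more roundabout than necessary---once one observes that $h\mapsto hx$ also gives a bijection $H^1_{\theta_x}(\Ztwo,G)\leftrightarrow\{y\in G\delta\mid y^2=z\}/G$ directly at the level of $G$, the identification is immediate by definition---but your route via the injectivity/surjectivity of Proposition~\ref{p:theta} and Lemma~\ref{l:strongreal} is also valid (noting that Lemma~\ref{l:strongreal} is stated for $\theta_x$ rather than $x$, so one uses that $\theta_y$ conjugate to $\theta_{x'}$ forces $y$ conjugate into $H\delta$ up to center). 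Your central-twist argument for the order claim is exactly what is needed and is more explicit than anything the paper writes down.
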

Tracing through the construction we see the map from right to left takes 
$y\in H\delta$ to  $[yx\inv]$. 

This result is not optimal because, given a Cartan involution $\theta$,
to compute $H^1_\theta(\Ztwo,G)$ we need to choose a strong
involution $x$
so that $\theta$ is conjugate to $\theta_x$.
On the other hand the right hand side of the Lemma only depends on
$z\in Z^\tau$. What is missing is a  definition of invariant of a real
form (compare Definition \ref{d:strongreal}).

\begin{definition}
\label{d:invariant}
Define $\zinv:\Inv_\tau(G)/G\rightarrow Z^\tau/(1+\tau)Z$ by 
the composition of maps:
\begin{equation}
\label{e:invariant}
\Inv_\tau(G)/G\leftrightarrow H^1_{\thetaqc}(\Ztwo,\Gad)\rightarrow
H^2_{\thetaqc}(\Ztwo,Z)\simeq Z^\tau/(1+\tau)Z.
\end{equation}
We refer to $\zinv$ as the central invariant of a real form.
\end{definition}

Note that the central invariant of the quasicompact real form is the identity.

For the first map, it is straightforward to see that the map $g\rightarrow \int(g)\circ\thetaqc$ induces a bijection
$
H^1_{\thetaqc}(\Ztwo,\Gad)
\longleftrightarrow
\Inv_\tau(G)/G
$.
The second  map  is from the long exact cohomology sequence
associated to the exact sequence $1\rightarrow Z\rightarrow
G\rightarrow G_{ad}\rightarrow 1$.
Since $\Ztwo$ is cyclic
there is an isomorphism in Tate cohomology
$H^2_\theta(\Ztwo,Z)=\wh
H^2_\theta(\Ztwo,Z)\simeq \wh H^0_\theta(\Ztwo,Z)$,  and the final isomorphism
is the standard description of this cohomology.

Note that the invariant of a real form is an element of
$Z^\tau/(1+\tau)Z$, whereas the invariant of a strong real form (Definition \ref{d:strongreal}) is an
element of $Z^\tau$. Now we can restate Lemma \ref{l:H1Z2}.

\begin{lemma}
Suppose $\theta\in\Inv(G)$.
Choose a representative $z\in Z^\tau$  of $\zinv(\theta)\in Z^\tau/(1+\tau)Z$. 
Then  there is a bijection
$$
H^1_\theta(\Ztwo,G)\longleftrightarrow \text{the strong real forms with central invariant }z
$$
\end{lemma}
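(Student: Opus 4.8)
The plan is to deduce this from Lemma~\ref{l:H1Z2}, which already gives the desired bijection whenever the element $z$ is realized \emph{exactly} as the square of a strong involution lying in $H\delta$. Two gaps must be closed: the given $\theta$ is an arbitrary involution rather than one of the form $\theta_x$ with $x\in H\delta$, and the chosen representative $z\in Z^\tau$ of $\zinv(\theta)$ need not coincide with $x^2$ for such an $x$.

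First I would invoke Lemma~\ref{l:strongreal} to replace $\theta$ by a conjugate equal to $\theta_x$ for some strong involution $x\in H\delta$. Conjugation by $g\in G$ induces an isomorphism $H^1_\theta(\Ztwo,G)\simeq H^1_{\int(g)\circ\theta\circ\int(g\inv)}(\Ztwo,G)$, and $\zinv$ is defined on conjugacy classes, so nothing is lost in assuming $\theta=\theta_x$. Next I would record the compatibility of the two central invariants. Unwinding Definition~\ref{d:invariant} for $x=h\delta$, the class $[\bar h]\in H^1_{\thetaqc}(\Ztwo,\Gad)$ corresponding to $\theta_x$ is carried by the connecting map to the class of $h\,\thetaqc(h)=x^2\in Z^\tau$; hence $\zinv(\theta)$ is represented by $x^2$.

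The key step is to adjust $x$ so that its square is exactly the prescribed representative $z$. Since $z$ and $x^2$ represent the same class in $Z^\tau/(1+\tau)Z$, we may write $z\cdot(x^2)\inv=w\,\tau(w)$ for some $w\in Z$. Setting $x'=wx$, centrality of $w$ gives $\theta_{x'}=\int(x')=\int(x)=\theta$, while $\int(x)$ acts on $Z$ through $\Out(G)$, i.e.\ by $\tau$, so that $(x')^2=w\,\theta_x(w)\,x^2=w\,\tau(w)\,x^2=z$. Thus $x'\in H\delta$ is a strong involution with $\theta_{x'}=\theta$ and $\zinv(x')=z$, and Lemma~\ref{l:H1Z2} applied to $x'$ yields the bijection between $H^1_\theta(\Ztwo,G)$ and the strong real forms of central invariant $z$. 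The only delicate point is this last adjustment: one must verify that moving $z$ within its $(1+\tau)Z$-coset can be absorbed by twisting $x$ by a central element without disturbing the underlying involution, and this rests precisely on the fact that $\int(x)$ restricts to $\tau$ on the center.
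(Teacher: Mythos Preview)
Your argument is correct and is exactly what the paper intends. The paper gives no separate proof, simply declaring ``Now we can restate Lemma~\ref{l:H1Z2}'' after introducing Definition~\ref{d:invariant}; your three steps---conjugating $\theta$ to $\theta_x$ via Lemma~\ref{l:strongreal}, checking that $\zinv(\theta)$ is the coset of $x^2$, and twisting $x$ by a central element to land on the chosen representative $z$---spell out that restatement in full.
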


Finally we pass back to the antiholomorphic picture to get Theorem \ref{t:H1}.
We first need a version of invariant in this setting.
Let $\sigmaqc$ be an anitholomorphic involution corresponding to 
$\thetaqc$, so $G(\R)=G^{\sigmaqc}$ is quasicompact.
Every real form in this inner class has the same restriction to $Z$,
which we denote $\sigma$. 
As in \eqref{e:invariant}
define a map  $\zinv:\oInv_\tau(G)/G\rightarrow
Z^\Gamma/(1+\sigma)Z$
by:
\begin{equation}
\label{e:sigmainvariant}
\oInv_\tau(G)/G\leftrightarrow
H^1_{\sigmaqc}(\Gamma,\Gad)\rightarrow
H^2_{\sigmaqc}(\Gamma,Z)\simeq Z^\Gamma/(1+\sigma)Z.
\end{equation}
This takes the quasicompact form to the identity.

\begin{lemma}
There is a canonical bijection between  $Z^\tau/(1+\tau)Z$ and
$Z^\Gamma/(1+\sigma)Z$.
This makes the following diagram commute:
$$
\xymatrix{
H^1_{\thetaqc}(\Ztwo,\Gad)\ar[r]^{\eqref{e:invariant}}\ar[d]_{\text{Theorem } \ref{t:main}}& Z^\tau/(1+\tau)Z\ar[d]\\
H^1_{\sigmaqc}(\Gamma,\Gad)\ar[r]_{\eqref{e:sigmainvariant}}& Z^\Gamma/(1+\sigma)Z
}.
$$
\end{lemma}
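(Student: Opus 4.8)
The plan is to recognize both quotients as degree-zero Tate cohomology groups and to obtain the canonical bijection as the degree-zero case of the torus isomorphism. Concretely, $Z^\tau/(1+\tau)Z=\wh H^0_{\thetaqc}(\Ztwo,Z)$ and $Z^\Gamma/(1+\sigma)Z=\wh H^0_{\sigmaqc}(\Gamma,Z)$, and I would take the right vertical arrow to be the degree-zero instance of Proposition \ref{p:torus}. Since $Z$ is only diagonalizable, not a torus, I would first record that the essential input survives: because $\thetaqc=\sigmaqc\sigma_c$ and all torsion of $Z$ lies in the compact real form $Z^{\sigma_c}$, the involutions $\sigma$ and $\tau$ agree on the torsion of $Z$. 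Applying Proposition \ref{p:torus} to the torus $Z^\circ$ and using this agreement on $\pi_0(Z)$ then yields a canonical isomorphism $\wh H^0_{\sigmaqc}(\Gamma,Z)\simeq\wh H^0_{\thetaqc}(\Ztwo,Z)$; equivalently this is the well-known canonical identification $\pi_0(Z(\R))\simeq\pi_0(Z^\tau)$ recorded in the remark following Theorem \ref{t:main}.

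For the square, the first step is to identify the horizontal maps \eqref{e:invariant} and \eqref{e:sigmainvariant} as the connecting homomorphisms $\partial\colon H^1(\cdot,\Gad)\to H^2(\cdot,Z)$ of the central extension $1\to Z\to G\to\Gad\to1$, followed by the periodicity isomorphism $H^2\simeq\wh H^0$. Commutativity is then precisely the statement that the isomorphism of Theorem \ref{t:main} on $\Gad$ is natural with respect to $\partial$. The second step is to push everything down to a fundamental Cartan subgroup. Let $H_f$ be a $\sigma,\theta$-stable fundamental Cartan subgroup of $G$ and let $H_{\mathrm{ad}}=H_f/Z$, a fundamental Cartan subgroup of $\Gad$, so that restricting the extension gives $1\to Z\to H_f\to H_{\mathrm{ad}}\to1$. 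Functoriality of $\partial$ under the inclusion $H_{\mathrm{ad}}\hookrightarrow\Gad$, together with the isomorphisms $H^1(H_{\mathrm{ad}})/W_i\simeq H^1(\Gad)$ of Propositions \ref{p:sigma} and \ref{p:theta}, reduces the claim to the Cartan level. Here I would note that $W_i$ acts trivially on $\wh H^0(Z)$ (the Weyl group acts trivially on the center), so $\partial$ descends to the $W_i$-quotients and the reduction is clean.

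At the Cartan level the isomorphism of Theorem \ref{t:main} is, by its construction in diagram \eqref{diagram}, exactly the Proposition \ref{p:torus} isomorphism for $H_f$, passed to $W_i$-quotients and matched with $H_{\mathrm{ad}}$ via Lemma \ref{l:agree}. The remaining and main point is therefore the naturality of Proposition \ref{p:torus}: that the canonical isomorphisms $\wh H^i_\sigma(\Gamma,\cdot)\simeq\wh H^i_\theta(\Ztwo,\cdot)$ commute with the connecting homomorphisms $\wh H^1(H_{\mathrm{ad}})\to\wh H^2(Z)$ attached to $1\to Z\to H_f\to H_{\mathrm{ad}}\to1$. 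I expect this to be the main obstacle, since that isomorphism was built by hand from the squaring sequence rather than as a manifest morphism of cohomological $\partial$-functors. I would settle it either by upgrading the construction so that it commutes with every connecting homomorphism, or, more concretely, by a cocycle computation: represent a class by $\bar h\in H_{\mathrm{ad}}^{-\sigma}$, lift to $h\in H_f$, so that $\partial_\sigma[\bar h]=[h\sigma(h)]$ while the corresponding $\theta$-class gives $[h'\theta(h')]$, and compare these two values in $Z$ using $\theta=\sigma\sigma_c$ and the explicit description of the Theorem \ref{t:main} isomorphism given after its proof. The agreement of $\sigma$ and $\tau$ on the torsion of $Z$, where the two-torsion classes of $\wh H^0(Z)$ are detected, is exactly what forces the two connecting values to define the same class under the bijection of the first paragraph.
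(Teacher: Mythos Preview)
Your proposal is essentially correct in spirit, but it is far more elaborate than what the paper actually does, and the extra machinery you introduce is not needed. The paper's entire argument is one sentence: one can choose representatives of $Z^\tau/(1+\tau)Z$ and $Z^\Gamma/(1+\sigma)Z$ of finite order, and since $\sigma=\theta\sigma_c$ with $\sigma_c$ fixing every element of finite order, $\theta$ and $\sigma$ agree on these representatives. That single observation simultaneously gives the bijection and the commutativity of the square. You do arrive at this same observation (agreement of $\sigma$ and $\tau$ on the torsion of $Z$), but only at the very end, after routing through Tate cohomology, naturality of connecting homomorphisms, reduction to a fundamental Cartan, and a cocycle chase you explicitly flag as the ``main obstacle.''

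The difference is that you try to prove a structural naturality statement---that the isomorphisms of Proposition~\ref{p:torus} intertwine the connecting maps of $1\to Z\to H_f\to H_{\mathrm{ad}}\to 1$---whereas the paper bypasses this entirely by working pointwise with finite-order representatives. Your approach would buy a more functorial statement if carried through, but as written it is a plan rather than a proof: you yourself say you ``expect this to be the main obstacle'' and offer two possible resolutions without executing either. The paper's route shows that no such obstacle exists once you notice that both sides are detected on the torsion subgroup, where $\sigma$ and $\theta$ literally coincide.
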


This comes down to the fact that we can choose representatives
of $Z^\tau/(1+\tau)Z$ and $Z^\Gamma/(1+\sigma)Z$ of finite order, and
$\theta$ and $\sigma=\theta\sigma_c$ agree on these.

\begin{proposition}
\label{p:H1}
Suppose $\sigma$ is a real form of $G$, and choose a representative 
$z\in Z^\Gamma$ of $\zinv(\sigma)\in Z^\Gamma/(1+\sigma)Z$. 
Then there is a bijection
$$
H^1(\Gamma,G)\longleftrightarrow\text{the set of strong real forms of central invariant }z.
$$
\end{proposition}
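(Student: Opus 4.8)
The plan is to obtain this as the Galois-cohomology translation of the holomorphic statement already proved, transporting the latter through the canonical isomorphism of Theorem \ref{t:main}. All the ingredients are in place: the bijection between $H^1_\theta(\Ztwo,G)$ and the strong real forms of a prescribed central invariant, the identification of central invariants furnished by the two preceding lemmas, and the isomorphism $H^1_\sigma(\Gamma,G)\simeq H^1_\theta(\Ztwo,G)$ itself.

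First I would fix a Cartan involution $\theta$ corresponding to $\sigma$, with inner class $\tau$, so that the relevant strong real forms are those in $\SRF_\tau(G)$. By the lemma identifying central invariants, the canonical bijection $Z^\Gamma/(1+\sigma)Z\simeq Z^\tau/(1+\tau)Z$ carries $\zinv(\sigma)$ to $\zinv(\theta)$, and it is realized on representatives of finite order. Choosing the given representative $z\in Z^\Gamma$ of $\zinv(\sigma)$ to be of finite order, the element $z$ then lies simultaneously in $Z^\tau$ and represents $\zinv(\theta)$ there; so ``the strong real forms with central invariant $z$'' denotes one and the same fiber $\zinv^{-1}(z)\subset\SRF_\tau(G)$ in either picture. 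It remains only to compose two bijections: the canonical isomorphism $H^1_\sigma(\Gamma,G)\simeq H^1_\theta(\Ztwo,G)$ of Theorem \ref{t:main}, and the holomorphic version of the result, $H^1_\theta(\Ztwo,G)\leftrightarrow\{\text{strong real forms with central invariant }z\}$. The composite is the asserted bijection, and it depends only on the choice of $z$ (and on the fixed pinning used to define $\SRF_\tau(G)$), exactly as in the $\theta$-case.

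The single point requiring care --- and the only real obstacle --- is the compatibility of the representative $z$ across the two pictures: the central invariant of a strong real form is by construction an element of $Z^\tau=Z^{\thetaqc}$, whereas the proposition prescribes $z\in Z^\Gamma=Z^\sigma$. Reconciling these is exactly the finite-order mechanism already invoked for Proposition \ref{p:torus}, where the inclusion $H_2\subset H_c(\R)$ forces $\sigma$ and $\theta=\sigma\sigma_c$ to coincide on elements of finite order. Since $\sigma_c$ acts trivially on the torsion subgroup of $Z$, a finite-order element is fixed by $\sigma$ if and only if it is fixed by $\theta$; thus a torsion representative $z$ lies in $Z^\Gamma$ precisely when it lies in $Z^\tau$, and matching such representatives yields the canonical bijection $Z^\Gamma/(1+\sigma)Z\simeq Z^\tau/(1+\tau)Z$ of the preceding lemma. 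Once this identification is secured, no further computation is needed and the proposition follows formally from Theorem \ref{t:main} together with the holomorphic lemma.
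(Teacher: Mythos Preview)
Your proposal is correct and follows essentially the same route as the paper: the proposition is obtained by transporting the holomorphic lemma (the restatement of Lemma~\ref{l:H1Z2}) through Theorem~\ref{t:main}, using the preceding lemma's identification $Z^\tau/(1+\tau)Z\simeq Z^\Gamma/(1+\sigma)Z$ and the finite-order representative trick to reconcile the two notions of central invariant. The paper in fact gives no further argument beyond assembling these pieces, and your handling of the one genuine subtlety---that $z$ must simultaneously lie in $Z^\Gamma$ and $Z^\tau$ for the statement to parse---matches the paper's remark that $\sigma$ and $\theta$ agree on finite-order elements of $Z$.
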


\begin{corollary}
\label{c:equalrank}
Suppose $G(\R)$ is an equal rank real form.
Choose $x\in G$ so that $\Cent_G(x)$ is a complexified maximal compact
subgroup of $G(\R)$, and let $z=x^2\in Z(G)$. Then
$$
H^1(\Gamma,G)\longleftrightarrow \text{the set of conjugacy classes
  of $G$ with square equal to $z$}
$$
If $H$ is any Cartan subgroup, with Weyl group $W$, then this is equal to
\begin{equation}
\label{e:h2z}
\{h\in H\mid h^2=z\}/W
\end{equation}
\end{corollary}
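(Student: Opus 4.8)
The plan is to read the corollary off from Proposition \ref{p:H1} by observing that the equal rank hypothesis makes the auxiliary group $\Gext$, and with it the notion of strong real form, degenerate into ordinary conjugacy.

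First I would extract the consequences of the equal rank hypothesis. Since $G(\R)$ is of equal rank, its inner class is the equal rank (compact) class, so $\tau=1$ and $\thetaqc=1$; thus $\delta$ is central in $\Gext=G\times\Ztwo$, and $g\mapsto g\delta$ identifies $\{g\in G\mid g^2\in Z\}$ with the strong involutions in $G\delta$. Under this identification $G$-conjugacy on $G\delta$ becomes ordinary conjugacy on $G$, the central invariant of $g\delta$ is $\zinv(g\delta)=g^2$, and hence strong real forms with central invariant $z$ correspond exactly to $G$-conjugacy classes of $g\in G$ with $g^2=z$. The chosen element $x$ satisfies $x^2=z\in Z$ and $\Cent_G(x)=G^{\int(x)}$, so $\theta:=\int(x)$ is the Cartan involution attached to $\sigma$; consequently $x\delta$ is a strong involution with $\theta_{x\delta}=\theta$ and $\zinv(x\delta)=z$, so that $z=x^2$ represents the central invariant of $\sigma$. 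Feeding this into Proposition \ref{p:H1} (or, to sidestep the requirement that the representative lie in $Z^\Gamma$, working on the $\theta$ side via Lemma \ref{l:H1Z2} and transporting by Theorem \ref{t:main}) yields
$$
H^1(\Gamma,G)\longleftrightarrow\{\text{strong real forms with central invariant }z\}\longleftrightarrow\{g\in G\mid g^2=z\}/\text{conj},
$$
which is the first assertion.

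For the explicit description \eqref{e:h2z} I would argue by classical conjugacy theory, independently of the real structure. Any $g$ with $g^2=z\in Z$ is semisimple: in characteristic zero the Jordan decomposition $g=su$ forces $u^2=1$, whence $u=1$. Since $G$ is connected reductive, every semisimple element is conjugate into a fixed maximal torus $H$, so each class with $g^2=z$ meets $\{h\in H\mid h^2=z\}$; and two elements of $H$ are $G$-conjugate if and only if they are $W$-conjugate. This gives the bijection with $\{h\in H\mid h^2=z\}/W$, and since all maximal tori of $G$ are conjugate the answer is the same for any Cartan subgroup $H$, as claimed. The special case $z=1$, $\theta=1$ recovers Example \ref{ex:compact}.

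I expect the only real work to be the bookkeeping of the middle paragraph: confirming that the degeneration $\thetaqc=1$ really does turn strong real forms into honest conjugacy classes, and matching the corollary's choice $z=x^2$ with the central invariant of $\sigma$ used in Proposition \ref{p:H1} (in particular being careful about which quotient of $Z$ the invariant lives in). Once that identification is secure, the passage to $\{h\in H\mid h^2=z\}/W$ is routine.
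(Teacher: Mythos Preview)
Your proposal is correct and matches the paper's approach: the corollary is stated without proof, as an immediate specialization of Proposition \ref{p:H1} (equivalently Lemma \ref{l:H1Z2}) to the equal rank case, and you have spelled out exactly the intended unpacking---that $\tau=1$ forces $\thetaqc=1$, so $\delta$ is central in $\Gext$ and strong real forms with invariant $z$ are just conjugacy classes $\{g\in G\mid g^2=z\}$. Your second paragraph, reducing to $\{h\in H\mid h^2=z\}/W$ via semisimplicity and the standard conjugacy theorem for maximal tori, is likewise the expected argument.
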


\begin{exampleplain}
\label{ex:serre2}
Taking $x=z=I$ gives $G(\R)$ compact and recovers \cite[Theorem
6]{serre_galois}: $H^1(\Gamma,G)$ is the set of  conjugacy classes of
$G$ of involutions of $G$. See Example \ref{ex:serre1}.
\end{exampleplain}

\begin{exampleplain}
Let $G(\R)=Sp(2n,\R)$. We can take $x=\diag(iI_n,-iI_n)$, $z=-I$. 
It is easy to see that every element of $G$ whose square is $-I$ is conjugate to $x$.
This
gives the classical result $H^1(\Gamma,G)=1$, which 
is equivalent to the classification of nondegenerate symplectic forms \cite[Chapter 2]{platonov_rapinchuk}.
\end{exampleplain}

\begin{exampleplain}
Suppose $G(\R)=SO(Q)$, the isometry group of a nondegenerate real quadratic
form.
Suppose $Q$ has signature $(p,q)$.
If $pq$ is even we can take $z=I$, Corollary \ref{c:equalrank} applies, 
and the set \eqref{e:h2z} is equal to $\{\diag(I_r,-I_s)\mid r+s=p+q;\,s\text{ even}\}$.

Suppose $p$ and $q$ are odd. Apply Corollary \ref{c:Mf} with  $M_f(\R)=SO(p-1,q-1)\times GL(1,\R)$.
By the previous case we conclude $H^1(\Gamma,G)$ is parametrized by $\{\diag(I_r,I_s)\mid r+s=p+q-2;\, r,s\text{ even}\}$. 
Adding $(1,1)$ this is the same as $\{\diag(I_r,-I_s)\mid r+s=p+q;\, s\text{ odd}\}$. 

In all cases we recover the classical fact that $H^1(\Gamma,G)$ parametrizes the set of equivalence classes of 
quadratic forms of the same dimension and discriminant as $Q$ \cite[Chapter 2]{platonov_rapinchuk},
\cite[III.3.2]{serre_galois}.
\end{exampleplain}

\begin{exampleplain}
\label{ex:spin}
Now suppose $G(\R)=\Spin(p,q)$, which is  a two-fold cover of
the identity component of 
$SO(p,q)$. 
A  calculation similar to that in the previous example 
shows that 
$|H^1(\Gamma,\Spin(p,q))|=\lfloor\frac{p+q}4\rfloor+\delta(p,q)$
where $0\le \delta(p,q)\le 3$ depends on $p,q\pmod4$.
See Section \ref{s:simply}.

Skip Garibaldi pointed out this result can also be derived from 
the long exactly cohomology sequence associated to 
the exact sequence $1\rightarrow\Ztwo\rightarrow \Spin(n,\C)\rightarrow SO(n,\C)\rightarrow 1$;
the preceding result; the fact that $SO(p,q)$ 
is connected if $pq=0$ and otherwise has two connected components; 
and a calculation of the image of the map from
$H^1(\Gamma,Spin(n,\C))\rightarrow H^1(\Gamma,SO(n,\C))$.
See \cite[after (31.41)]{bookofinvolutions}, \cite[III3.2]{serre_galois} and also section \ref{s:fibers}.
The result is:

\medskip

\noindent $|H^1(\Gamma,\Spin(Q))|$ equals the number of  quadratic forms having the
same dimension, discriminant, and Hasse invariant as $Q$ with each (positive
or negative) definite form counted twice.
\end{exampleplain}

\begin{remarkplain}
\label{rem:equivalent}
In \cite{abv} and 
\cite{vogan_local_langlands} strong real forms are defined 
in terms of the Galois action, as opposed to the Cartan involution 
as  in \cite{algorithms}  (and elsewhere, including \cite{bowdoin}).
The preceding discussion shows that these two theories are indeed equivalent. 
However the choices of basepoints in the two theories are different. 
In the Galois setting  we choose the quasisplit form, and 
in the algebraic setting  we use the quasicompact one. 

In \cite{vogan_local_langlands} the invariant of a real form
is given by \cite[(2.8)(c)]{vogan_local_langlands}.
This differs from the normalization here by multiplication 
by  $\exp(2\pi i\ch\rho)\in Z$.
Note that 
the ``pure'' rational forms, which are parametrized by $H^1(\Gamma,G)$,
include the quasisplit one 
\cite[Proposition 2.7(c)]{vogan_local_langlands}, rather than the quasicompact one.
\end{remarkplain}

\begin{remarkplain}
Kottiwtz  relates $H^1(\Gamma,G)$ to
the center of the dual group \cite[Theorem 1.2]{kottwitzStable}. This is a somewhat different type of
result. For example this result identifies the kernel of the map
from $H^1(\Gamma,G_{\text{sc}})$ to $H^1(\Gamma,G)$, but if $G$ is simply connected this gives
no information.
\end{remarkplain}

Contrary to the adjoint case, inner forms do not necessarily have isomorphic cohomology, as was illustrated in Example 
\ref{ex:sl2}. Proposition \ref{p:H1} clarifies the situation. 
See \cite[Section I.5.7, Remark 1]{serre_galois}.

\begin{corollary}
\label{c:clarify}
Suppose $\sigma,\sigma'$ are inner forms of $G$. If $\zinv(\sigma)=\zinv(\sigma')$ then 
$H^1_\sigma(\Gamma,G)\simeq H^1_{\sigma'}(\Gamma,G)$. 
\end{corollary}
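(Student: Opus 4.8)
The plan is to reduce the statement to the parametrization of $H^1(\Gamma,G)$ by strong real forms established in Proposition \ref{p:H1}. The key observation is that this parametrization depends on $\sigma$ \emph{only} through its central invariant $\zinv(\sigma)\in Z^\Gamma/(1+\sigma)Z$, so two inner forms with the same central invariant ought to be assigned the same set of strong real forms, hence isomorphic cohomology. First I would note that $\sigma$ and $\sigma'$, being inner forms of each other, lie in the same inner class $\tau\in\Out(G)$; in particular they have the same restriction to the center $Z$ (as recorded in Section \ref{s:galois}, every real form in a fixed inner class restricts to the same $\sigma$ on $Z$), so the ambient groups $Z^\Gamma$ and $Z^\Gamma/(1+\sigma)Z$ in which the central invariants live coincide for the two forms. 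This is what makes the hypothesis $\zinv(\sigma)=\zinv(\sigma')$ even meaningful.

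Next I would invoke Proposition \ref{p:H1} for each of $\sigma$ and $\sigma'$. Choosing a single representative $z\in Z^\Gamma$ of the common value $\zinv(\sigma)=\zinv(\sigma')$, Proposition \ref{p:H1} furnishes bijections
\begin{equation}
H^1_\sigma(\Gamma,G)\longleftrightarrow\{\text{strong real forms with central invariant }z\}\longleftrightarrow H^1_{\sigma'}(\Gamma,G),
\end{equation}
where the right-hand set of strong real forms is defined purely in terms of the inner class $\tau$ and the element $z\in Z^\tau$ (via Lemma \ref{l:H1Z2}, it is $[\{y\in H\delta\mid y^2=z\}/\!\sim\! H]/W_i$), with no further reference to which particular form in the inner class we started from. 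Composing the two bijections yields the desired bijection $H^1_\sigma(\Gamma,G)\simeq H^1_{\sigma'}(\Gamma,G)$ of pointed sets.

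The main point requiring care, and the step I expect to be the real content, is that the target set of strong real forms is genuinely independent of the choice between $\sigma$ and $\sigma'$: one must check that the passage from $\sigma$ to its central invariant (Definition \ref{d:invariant} and its antiholomorphic counterpart \eqref{e:sigmainvariant}), and the subsequent identification of strong real forms with fixed invariant $z$, factors through data that $\sigma$ and $\sigma'$ share—namely the inner class $\tau$ and the value $z$. Since both forms determine the same $\thetaqc$ (equivalently $\sigmaqc$) and the same restriction to $Z$, the construction of Lemma \ref{l:H1Z2} attaches literally the same set to both, and so the only freedom is the choice of representative $z$, which affects the set by a canonical twist and hence does not change its isomorphism type. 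I would therefore emphasize that the isomorphism is canonical once $z$ is fixed, and that changing $z$ within its class in $Z^\Gamma/(1+\sigma)Z$ alters the bijection only by an inner twisting of the kind already absorbed in the quotients defining these cohomology sets (cf. the proof of Theorem \ref{t:main}).
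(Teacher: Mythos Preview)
Your proposal is correct and is precisely the argument the paper intends: the corollary is stated immediately after Proposition~\ref{p:H1} with no separate proof, the text simply saying that ``Proposition~\ref{p:H1} clarifies the situation.'' Your observation that both inner forms share the same restriction to $Z$ (so the hypothesis is meaningful) and that the set of strong real forms of central invariant $z$ depends only on the inner class and on $z$, not on the particular $\sigma$, is exactly the content needed, and the paper takes all of this as implicit in the statement of Proposition~\ref{p:H1}.
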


\sec{Relation with Rigid Rational forms}
\label{s:rigid}

The space of strong rational forms can naturally be thought of 
as a cohomological object. From this point of view the proof of Lemma 
\ref{l:H1Z2} amounts to the standard twisting argument.

Vogan \cite[Problem 9.3]{vogan_local_langlands} has conjectured that
there should be a notion of strong rational form in the p-adic case,
generalizing the real case, and gave a number of
properties this definition should satisfy.  Kaletha has found a
definition which satisfies these conditions \cite{kaletha_rigid}, and
the relationship between Galois cohomology and rigid rational forms carries over to that setting. 
We confine ourselves to the real case, and refer the reader to
\cite{kaletha_rigid} for more details, and the p-adic case.

Recall $\SRF_\tau(G)=\{x\in G\delta\mid x^2\in Z\}/G$. 
It is easy to see there is an exact sequence of pointed sets

\begin{equation}
\label{e:exactsrf}
1\rightarrow H^1_{\thetaqc}(\Ztwo,G)\rightarrow \SRF_\tau(G)\overset{\zinv}\longrightarrow Z^\tau
\end{equation}
The first map takes $[g]$ $(g\in G^{-\thetaqc})$ 
to $g\delta$, and $ \zinv(x)=x^2$ 
(see the discussion following
Definition \ref{d:invariant}, and  Definition \ref{d:strongreal}).

By definition the strong real forms of invariant $z\in Z^\tau$ are the fiber 
over $z$ of the invariant map, and 
the exact sequence identifies $H^1_{\thetaqc}(\Ztwo,G)$ as the strong real forms of invariant $1$. 

\begin{proposition}
Suppose the fiber over  $z\in Z^\tau$ is nonempty. 
Choose $x\in G\delta$ so that  $x^2=z$.
Then we may identify the fiber over $z$ with $ H^1_{\theta_x}(\Ztwo,G)$.
\end{proposition}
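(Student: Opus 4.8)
The plan is to realize the fiber over $z$ by the standard twisting argument, exactly as in the proof of Lemma \ref{l:H1Z2}, now taking $x$ itself as the twisting element rather than a representative in $H\delta$. First I would check that $\theta_x=\int(x)$ is an involution lying in the inner class of $\tau$, so that $H^1_{\theta_x}(\Ztwo,G)$ is defined. Since $x\in G\delta$ and $x^2=z\in Z$, for any $g\in G$ one has $\theta_x^2(g)=x^2gx^{-2}=zgz\inv=g$; and writing $x=h\delta$ gives $\theta_x=\int(h)\circ\thetaqc$, which is inner to $\thetaqc$.

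Next comes the map. For $y\in G\delta$ the element $yx\inv$ lies in $G$ (as $(G\delta)(G\delta)\inv=G$), so $y\mapsto g:=yx\inv$ is a bijection $G\delta\to G$ with inverse $g\mapsto gx$. I would then compute, for $y=gx$,
$$
y^2=gx\,gx=g(xgx\inv)x^2=g\,\theta_x(g)\,z,
$$
so that $y^2=z$ if and only if $g\,\theta_x(g)=1$, i.e. $g\in G^{-\theta_x}$. Hence $y\mapsto yx\inv$ restricts to a bijection between $\{y\in G\delta\mid y^2=z\}$ and $G^{-\theta_x}$.

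It remains to match the equivalence relations. Under $G$-conjugation $y\mapsto\gamma y\gamma\inv$ $(\gamma\in G)$ the associated element transforms as
$$
g=yx\inv\;\longmapsto\;\gamma y\gamma\inv x\inv=\gamma(gx)\gamma\inv x\inv=\gamma g\,(x\gamma\inv x\inv)=\gamma\,g\,\theta_x(\gamma\inv),
$$
which is precisely the twisted-conjugation relation $g\mapsto\gamma g\,\theta_x(\gamma\inv)$ defining $H^1_{\theta_x}(\Ztwo,G)$. Passing to quotients then yields the desired bijection between the fiber $\{y\in G\delta\mid y^2=z\}/G$ over $z$ and $H^1_{\theta_x}(\Ztwo,G)$.

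There is no serious obstacle here: the whole argument is the twisting of Section \ref{s:twisting}, with $x$ as the twisting element, and the only computations are the two displayed identities. The single point requiring attention is that the twist appearing when one conjugates $y=gx$ is exactly $\theta_x=\int(x)$, which is what forces the target cohomology to be $H^1_{\theta_x}$ rather than $H^1_{\thetaqc}$; specializing to $z=1$, $x=\delta$ recovers the identification of the fiber over $1$ with $H^1_{\thetaqc}(\Ztwo,G)$ from \eqref{e:exactsrf}.
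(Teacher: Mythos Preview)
Your proof is correct and is essentially the same twisting argument the paper gives: both identify the fiber over $z$ with $H^1_{\theta_x}(\Ztwo,G)$ via $y\mapsto yx\inv$, checking that $y^2=z$ becomes the cocycle condition $g\theta_x(g)=1$ and that $G$-conjugation becomes $\theta_x$-twisted conjugation. The only cosmetic difference is that the paper first introduces the auxiliary pointed sets $\SRF(\theta,G)$ built from a new symbol $\delta_\theta$ with $\delta_\theta^2=1$, so that the twist $t_x:y\mapsto (yx\inv)\delta_{\theta_x}$ lands in $\SRF(\theta_x,G)$ and carries the fiber over $z$ to the fiber over $1$, which is then read off as $H^1_{\theta_x}(\Ztwo,G)$ from the analogue of the exact sequence \eqref{e:exactsrf}; your direct computation bypasses this packaging but performs the identical calculation.
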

Since the fiber over $z$ is, by definition, the strong real forms of invariant $z$,
this gives Lemma \ref{l:H1Z2}.

\begin{proof}
The proof is by twisting (and is essentially equivalent to the proof of Lemma \ref{l:H1Z2}). 
For this we generalize the definition of $\SRF_\tau(G)$.
Given $\theta\in\Inv(G)$ consider the group 
$$
G\rtimes\Ztwo=\langle G,\delta_\theta\rangle\quad (\delta_\theta^2=1,\, \delta_\theta g\delta_\theta\inv=\theta(g))
$$
and
$$
\SRF(\theta,G)=\{x\in G\delta_\theta\mid x^2\in Z\}/G.
$$
We view this is a pointed set with distinguished element $\delta_\theta$.
With this notation $\SRF_\tau(G)=\SRF(\thetaqc,G)$.

Just as in \eqref{e:exactsrf} there is an
exact sequence
\begin{equation}
\label{e:exactsrf2}
1\rightarrow H^1_\theta(\Ztwo,G)\rightarrow \SRF(\theta,G)\overset{\zinv}\longrightarrow Z^\tau
\end{equation}

Suppose $x\in\G\delta_\theta$ satisfies $z=x^2\in Z$, and let $\theta'=\int(x)\circ\theta$. 
Unless $z=1$ twisting by $x$ is not defined at the level of $H^1(\Ztwo,G)$, so
$H^1_\theta(\Ztwo,G)$ and $H^1_{\theta'}(\Ztwo,G)$ are not necessarily isomorphic.
See Section \ref{s:twisting}. 
However twisting by $x$ makes sense within the larger set $\SRF(\theta,G)$, and 
defines an isomorphism of 
$\SRF(\theta,G)$ and $\SRF(\theta',G)$.

More precisely,   there is a natural bijection $t_x:\SRF_\tau(G)\rightarrow \SRF(\theta_x,G)$
essentially given by multiplying on the right by $x\inv$. 
We need to account for  the fact that we have two different groups 
$\langle G,{\delta_{\theta_x}}\rangle$  and $\langle G,\delta\rangle$. To be precise
the map  $G\delta\ni y\rightarrow (yx\inv)\delta_{\theta_x}\in G\delta_{\theta_x}$ induces 
a bijection $t_x$, as is easily checked. This takes $x\in\SRF_\tau(G)$ to the basepoint
$\delta_{\theta_x}\in\SRF(\delta_\theta,G)$.

We obtain the commutative diagram of (non-pointed) sets:
\begin{equation}
\xymatrix{
&&\SRF_\tau(G)\ar[r]\ar[d]_{t_x}&Z^\tau\ar[d]\\
1\ar[r]& H^1_{\theta_x}(\Ztwo,G)\ar[r]&\SRF(\theta_x,G)\ar[r]&Z^\tau
}
\end{equation}
where the final vertical arrow is multiplication by $z\inv$. 
By the standard twisting argument (or direct calculation) $t_x$ takes the fiber over $z$ to the fiber over $1$, 
which is
$ H^1_{\theta_x}(\Ztwo,G)$. 
\end{proof}

\begin{corollary}
\label{c:interpret}
Choose representatives $z_1,\dots, z_n\in Z^\tau$ for the image of $\zinv:\SRF_\tau(G)\rightarrow Z^\tau$. 
For each $z_i$ choose $x_i\in G\delta$ such that $\zinv(x_i)=z_i$. Then 
there is a  bijection
$$
\SRF_\tau(G)\longleftrightarrow\bigcup_i H^1_{\theta_{x_i}}(\Ztwo,G).
$$
\end{corollary}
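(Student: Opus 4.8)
The plan is to recognize this Corollary as nothing more than the fiberwise assembly of the Proposition immediately preceding it. The central invariant $\zinv\colon\SRF_\tau(G)\to Z^\tau$ is a genuine map of sets, so $\SRF_\tau(G)$ is the disjoint union of its fibers (the fiber over $z$ being written $\zinv\inv(z)$) as $z$ ranges over the image of $\zinv$. Since $z_1,\dots,z_n$ are chosen as representatives for that image, the nonempty fibers are exactly $\zinv\inv(z_1),\dots,\zinv\inv(z_n)$, and we obtain a disjoint decomposition
$$
\SRF_\tau(G)=\bigsqcup_{i=1}^n \zinv\inv(z_i).
$$

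Next, I would apply the preceding Proposition one fiber at a time. For each $i$ the fiber $\zinv\inv(z_i)$ is nonempty, since it contains the class of the chosen $x_i\in G\delta$ with $\zinv(x_i)=z_i$. The Proposition therefore supplies a bijection $\zinv\inv(z_i)\leftrightarrow H^1_{\theta_{x_i}}(\Ztwo,G)$, realized concretely by the twisting map $t_{x_i}$, i.e.\ right multiplication by $x_i\inv$ together with the passage from $\langle G,\delta\rangle$ to $\langle G,\delta_{\theta_{x_i}}\rangle$, which carries the fiber over $z_i$ to the fiber over $1$ in $\SRF(\theta_{x_i},G)$, namely $H^1_{\theta_{x_i}}(\Ztwo,G)$. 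Running over $i=1,\dots,n$ and taking the union of these bijections yields the asserted bijection $\SRF_\tau(G)\leftrightarrow\bigcup_i H^1_{\theta_{x_i}}(\Ztwo,G)$.

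The remaining points are purely bookkeeping: one must check that the list $z_1,\dots,z_n$ is irredundant, so that the decomposition above is genuinely disjoint and matches the union on the right, and this is immediate from the defining property of a set of representatives for the image. I expect no substantive obstacle here. All of the real content — the twisting argument identifying a single nonempty fiber with an $H^1_\theta(\Ztwo,G)$, and, upstream of that, the reduction via Proposition \ref{p:theta} and Lemma \ref{l:H1Z2} — has already been discharged in establishing the preceding Proposition. The Corollary is simply the statement obtained by collecting those fiberwise identifications over all admissible central invariants simultaneously.
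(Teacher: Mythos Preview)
Your proposal is correct and matches the paper's approach exactly: the paper offers no separate proof of this Corollary, treating it as an immediate consequence of the preceding Proposition by decomposing $\SRF_\tau(G)$ into the fibers of $\zinv$ and applying the Proposition to each. Your description of the twisting map $t_{x_i}$ identifying each fiber with the corresponding $H^1_{\theta_{x_i}}(\Ztwo,G)$ is precisely the content of that Proposition, and the rest is bookkeeping, as you say.
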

(Strictly speaking the image of inv is infinite if  the center of
$G$ contains a compact torus. As in \cite{kaletha_rigid} 
or \cite[Section 13]{algorithms} the theory can be modified so this set is finite.) 

This gives an interpretation of $\SRF_\tau(G)$ in classical cohomological terms.
A similar statement holds in the p-adic case \cite{kaletha_rigid}.

\sec{Fibers of the map from $H^1(\Gamma,G)\rightarrow H^1(\Gamma,\Gbar)$}
\label{s:fibers}

Suppose $A\subset Z(G)$, and both $G$ and $\Gbar=G/A$ are defined over $\R$.
It is helpful to analyze
the fibers of the map $\psi:H^1(\Gamma,G)\overset\psi\rightarrow H^1(\Gamma,\Gbar)$.
In particular taking $G=\Gsc, \Gbar=\Gad$, 
and summing over $H^1(\Gamma,\Gad)$, we obtain a description 
of $H^1(\Gamma,\Gsc)$, complementary  to that of Proposition \ref{p:H1}.

Suppose $\sigma$ is a real form of $G$ such that $A$ is $\sigma$-invariant.
Write $G(\R,\sigma)=G^\sigma$ and $\Gbar(\R,\sigma)=\Gbar^\sigma$.

Write $p$ for the projection map $G\rightarrow \Gbar$. This restricts to a map
$G(\R,\sigma)\rightarrow \Gbar(\R,\sigma)$,  taking the identity component
of $G(\R,\sigma)$ to that of $\Gbar(\R,\sigma)$. Therefore $p$  factors to a map
(not necessarily an injection):
\begin{subequations}
\renewcommand{\theequation}{\theparentequation)(\alph{equation}}  
\begin{equation}
p^*:\pi_0(G(\R,\sigma))\rightarrow \pi_0(\Gbar(\R,\sigma)).
\end{equation}
Define
\begin{equation}
\pi_0(G,\Gbar,\sigma)=\pi_0(\Gbar(\R,\sigma))/p^*(\pi_0(G(\R,\sigma))).
\end{equation}

There is a natural action of  $\overline G(\R,\sigma)$ on  $H^1(\Gamma,A)$ defined as follows.
Suppose $g\in \Gbar(\R,\sigma)$. Choose $h\in G(\C)$ satisfying $p(h)=g$.
Then $g:a\rightarrow ha\sigma(h\inv)$ factors to a well defined action of 
$\Gbar(\R,\sigma)$ on $H^1(\Gamma,A)$. Furthermore
the image of $G(\R,\sigma)$, which includes the identity component,
acts trivially, so this factors to an  action of $\pi_0(G,\Gbar,\sigma)$. 
\end{subequations}

\begin{proposition}
\label{p:fiber}
Suppose $\gamma\in H^1(\Gamma,G)$, and write $\gamma=[g]$ 
($g\in G^{-\sigma}$). Let $\sigma_\gamma=\int(g)\circ\sigma\in \oInv(G)$.
Then there is a bijection
$$
H^1(\Gamma,G)\supset \psi\inv(\psi(\gamma))\longleftrightarrow H^1(\Gamma,A)/\pi_0(G,\Gbar,\sigma_\gamma).
$$

\end{proposition}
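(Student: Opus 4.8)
The plan is to exploit the short exact sequence $1\to A\to G\to \Gbar\to 1$ of $\sigma$-groups and the standard twisting technique for computing fibers of a map in nonabelian cohomology (as in \cite[Section I.5.5]{serre_galois}). The general principle is that the fiber $\psi\inv(\psi(\gamma))$ is not directly $H^1(\Gamma,A)$, but rather $H^1$ of $A$ for the \emph{twisted} Galois action, quotiented by a suitable automorphism group; the content of the proposition is to identify that twisted action and that quotient explicitly in terms of $\sigma_\gamma=\int(g)\circ\sigma$ and the component-group datum $\pi_0(G,\Gbar,\sigma)$.

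First I would reduce to the basepoint by twisting. Writing $\gamma=[g]$ with $g\in G^{-\sigma}$, I twist the exact sequence by the cocycle $g$; this replaces $\sigma$ by $\sigma_\gamma=\int(g)\circ\sigma$, sends $\gamma$ to the trivial class, and (since $A$ is central, hence its own twist) leaves $A$ unchanged as a group but changes its Galois action to the one induced by $\sigma_\gamma$. Because $A\subset Z(G)$ is central, the twisting of $A$ by the inner cocycle $\int(g)$ does not alter the group $A$ itself, only the action; this is exactly the mechanism producing $\sigma_\gamma$ in the statement. After this reduction it suffices to compute the fiber over the basepoint $\psi(1)$, which by the standard theory is the image of $H^1(\Gamma,A)\to H^1(\Gamma,G)$, i.e.\ $H^1(\Gamma,A)$ modulo the equivalence relation coming from the connecting map and the action of $\Gbar(\R,\sigma_\gamma)=\Gbar^{\sigma_\gamma}$.

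Next I would identify that equivalence relation as the action of $\pi_0(G,\Gbar,\sigma_\gamma)$. Two classes in $H^1(\Gamma,A)$ map to the same class in $H^1(\Gamma,G)$ precisely when they differ by the coboundary action described just before the proposition: for $g\in\Gbar^{\sigma_\gamma}$, lift to $h\in G(\C)$ with $p(h)=g$ and act by $a\mapsto ha\sigma_\gamma(h\inv)$. I would verify this is well defined on $H^1(\Gamma,A)$, that it is trivial on the image $p^*(\pi_0(G(\R,\sigma_\gamma)))$ (because lifts of elements of $G(\R,\sigma_\gamma)$ are already $\sigma_\gamma$-fixed, so the coboundary is a genuine coboundary), and hence that it factors through $\pi_0(G,\Gbar,\sigma_\gamma)$, exactly as set up in the displayed definitions preceding the statement. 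Conversely the connecting-map exact sequence $\Gbar^{\sigma_\gamma}\to H^1(\Gamma,A)\to H^1(\Gamma,G)$ shows these are the only identifications, giving the bijection with $H^1(\Gamma,A)/\pi_0(G,\Gbar,\sigma_\gamma)$.

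The main obstacle I anticipate is bookkeeping in the twisting step: one must check carefully that twisting by $g$ identifies the fiber over $\psi(\gamma)$ with the fiber over the basepoint \emph{compatibly} with the $A$-action, so that the component group that appears is $\pi_0(G,\Gbar,\sigma_\gamma)$ for the twisted form and not $\pi_0(G,\Gbar,\sigma)$ for the original. Since $A$ is central the twist fixes $A$ as a set but the relevant group acting is the $\sigma_\gamma$-real points of $\Gbar$, so the surjectivity of $p$ onto identity components and the triviality of the $G(\R,\sigma_\gamma)$-action must be re-derived for $\sigma_\gamma$ rather than $\sigma$; this is routine but is where the argument could go astray if one is not careful to track which real form each object lives over.
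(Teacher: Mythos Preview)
Your proposal is correct and follows exactly the paper's strategy: treat the basepoint case via the long exact sequence for $1\to A\to G\to\Gbar\to 1$ (identifying the fiber with the image of $H^1(\Gamma,A)$, i.e.\ $H^1(\Gamma,A)$ modulo the $\Gbar(\R)$-action, which factors through $\pi_0(G,\Gbar,\sigma)$), and handle general $\gamma$ by twisting to reduce to the basepoint. One small clarification: since $A$ is central, $\sigma_\gamma|_A=\sigma|_A$, so twisting does not change the Galois action on $A$ itself---what changes, as you correctly observe at the end, is the acting group $\Gbar(\R,\sigma_\gamma)$.
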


\begin{proof}
First assume $\gamma$ is trivial, and take $g=1$. 
Consider the exact sequence
$$
H^0(\Gamma,G)\rightarrow H^0(\Gamma,\Gbar)\rightarrow H^1(\Gamma,A)\overset{\phi}\rightarrow H^1(\Gamma,G)\overset\psi\rightarrow H^1(\Gamma,\Gbar).
$$
This says $\psi\inv(\psi((\gamma))=\phi(H^1(\Gamma,A))$, i.e. the orbit of the group $H^1(\Gamma,A)$ acting on 
the identity coset.
This  is $H^1(\Gamma,A)$, modulo the action of $H^0(\Gamma,\Gbar)$,
and this action factors through the image of $H^0(\Gamma,G)$. 
The general case follows from an easy twisting argument.
\end{proof}

We specialize to the case $G=\Gsc$ is simply connected and $\overline G=\Gad=\Gsc/\Zsc$ is the adjoint group.

\begin{corollary}
\label{c:fiber} 
Suppose $\sigma$ is a real form of $\Gsc$ and consider the map $\psi:H^1(\Gamma,\Gsc)\rightarrow H^1(\Gamma,\Gad)$. 

Suppose $\gamma\in H^1(\Gamma,\Gad)$, and write $\gamma=[g]$ ($g\in\Gad^{-\sigma}$). 
Let $\sigma_\gamma=\int(g)\circ\sigma$, viewed as an element of $\oInv(\Gsc)$. Then 
\begin{subequations}
\renewcommand{\theequation}{\theparentequation)(\alph{equation}}  
\begin{equation}
\gamma\text{ is in the image of }\psi\Leftrightarrow \zinv(\sigma_\gamma)=\zinv(\sigma),
\end{equation}
in which case
\begin{equation}
|\psi\inv(\gamma)|=|H^1(\Gamma,\Zsc)|/|\pi_0(\Gad(\R,\sigma_\gamma))|.
\end{equation}
Furthermore
\begin{equation}
|H^1(\Gamma,\Gsc)|=|H^1(\Gamma,\Zsc)|\sum_{\substack{\gamma\in H^1(\Gamma,\Gad)\\ \zinv(\sigma_\gamma)=\zinv(\sigma)}}|\pi_0(\Gad(\R),\sigma_\gamma)|\inv
\end{equation}
\end{subequations}
\end{corollary}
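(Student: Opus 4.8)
The plan is to deduce all three parts from the general fiber description of Proposition \ref{p:fiber}, specialized to $G=\Gsc$, $\Gbar=\Gad$ and $A=\Zsc$, combined with the exact sequence \eqref{e:exact1} and the definition of the central invariant. The three ingredients I will assemble are: (i) exactness of $H^1(\Gamma,\Gsc)\overset{\psi}{\longrightarrow} H^1(\Gamma,\Gad)\overset{\partial}{\longrightarrow} H^2(\Gamma,\Zsc)$, which identifies the image of $\psi$ with the kernel of the connecting map $\partial$; (ii) the identification $H^2(\Gamma,\Zsc)\simeq Z^\Gamma/(1+\sigma)Z$ and the definition of $\zinv$ from Definition \ref{d:invariant} and \eqref{e:sigmainvariant}; and (iii) the fiber count of Proposition \ref{p:fiber}.

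For part (a), exactness gives $\gamma\in\mathrm{Im}(\psi)$ if and only if $\partial(\gamma)$ is trivial, so the content is to show that under $H^2(\Gamma,\Zsc)\simeq Z^\Gamma/(1+\sigma)Z$ one has $\partial(\gamma)=\zinv(\sigma_\gamma)\,\zinv(\sigma)\inv$. Writing $\sigma=\int(\bar g_0)\circ\sigmaqc$, so that $\zinv(\sigma)=\partial_{\sigmaqc}([\bar g_0])$, and $\sigma_\gamma=\int(g\bar g_0)\circ\sigmaqc$, so that $\zinv(\sigma_\gamma)=\partial_{\sigmaqc}([g\bar g_0])$, this is a direct cocycle computation: choosing lifts to $\Gsc$ and using that the relevant products $\tilde h\,\sigmaqc(\tilde h)$ land in the central (hence commuting) subgroup $\Zsc$, the factors coming from $\bar g_0$ cancel and one is left exactly with the cocycle $\tilde g\,\sigma(\tilde g)$ representing $\partial(\gamma)$, via $\sigma(\tilde g)=\widetilde{g_0}\,\sigmaqc(\tilde g)\,\widetilde{g_0}\inv$. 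Since $\gamma$ and $\bar g_0$ are inner, $\sigma$ and $\sigmaqc$ agree on $Z$, so $(1+\sigma)Z=(1+\sigmaqc)Z$ and the identity makes sense in a single group. This compatibility of the connecting map with twisting is the technical heart of the corollary and the step where I expect to spend the most care.

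For part (b), assume $\gamma\in\mathrm{Im}(\psi)$ and lift it to some $\tilde\gamma\in H^1(\Gamma,\Gsc)$, so that $\psi\inv(\gamma)=\psi\inv(\psi(\tilde\gamma))$. Proposition \ref{p:fiber} gives a bijection of this fiber with $H^1(\Gamma,\Zsc)/\pi_0(\Gsc,\Gad,\sigma_\gamma)$, and two simplifications convert this into the stated count. First, since $\Gsc$ is simply connected it is well known that $\Gsc(\R,\sigma_\gamma)$ is connected, so $\pi_0(\Gsc(\R,\sigma_\gamma))=1$ and therefore $\pi_0(\Gsc,\Gad,\sigma_\gamma)=\pi_0(\Gad(\R,\sigma_\gamma))$. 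Second, the action on $H^1(\Gamma,\Zsc)$ recalled before Proposition \ref{p:fiber} sends, for $\bar h\in\Gad(\R,\sigma_\gamma)$ with lift $h$, a class to its translate by $h\,\sigma_\gamma(h\inv)\in\Zsc$; as $\Zsc$ is central this is genuinely translation by a subgroup of the abelian group $H^1(\Gamma,\Zsc)$, hence free. A short check shows the inducing map $\pi_0(\Gad(\R,\sigma_\gamma))\to H^1(\Gamma,\Zsc)$ is injective: if $h\,\sigma_\gamma(h\inv)$ is a coboundary $a\,\sigma_\gamma(a\inv)$ with $a\in\Zsc$, then $a\inv h\in\Gsc^{\sigma_\gamma}$ projects to $\bar h$, so $\bar h$ is trivial in $\pi_0(\Gsc,\Gad,\sigma_\gamma)$. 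A free action of a group of this order has all orbits of that size, giving $|\psi\inv(\gamma)|=|H^1(\Gamma,\Zsc)|/|\pi_0(\Gad(\R,\sigma_\gamma))|$.

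Finally, part (c) is bookkeeping: $H^1(\Gamma,\Gsc)$ is the disjoint union of the fibers $\psi\inv(\gamma)$ over $\gamma$ in the image of $\psi$, which by part (a) is exactly $\{\gamma\in H^1(\Gamma,\Gad)\mid\zinv(\sigma_\gamma)=\zinv(\sigma)\}$. Summing the cardinalities from part (b) over this set and factoring out the constant $|H^1(\Gamma,\Zsc)|$ yields the displayed formula. The only points needing genuine argument are the cocycle identity in part (a) and the freeness and injectivity claim in part (b); everything else is assembly of results already available.
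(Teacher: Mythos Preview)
Your proposal is correct and follows essentially the same route as the paper, which merely says ``(b) and (c) follow from the Proposition; for (a) the case $\sigma=\sigmaqc$, $\gamma=1$ is immediate, and the general case follows by twisting; we leave the details to the reader.'' You have supplied exactly those details: your cocycle computation for (a) is the unwound form of the twisting argument the paper alludes to, and for (b) you make explicit the two points the paper suppresses, namely that $\Gsc(\R,\sigma_\gamma)$ is connected (Cartan's theorem for simply connected real groups) so that $\pi_0(\Gsc,\Gad,\sigma_\gamma)=\pi_0(\Gad(\R,\sigma_\gamma))$, and that the translation action of this group on $H^1(\Gamma,\Zsc)$ is free, which is what converts the quotient in Proposition~\ref{p:fiber} into the quotient of cardinalities in the statement.
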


\begin{proof}
Statements (b) and (c) follow from the Proposition. 
For (a), when $\sigma=\sigmaqc$ and $\gamma=1$ the proof is immediate, and the general case follows by twisting. 
We leave the details to the reader.
\end{proof}

\sec{Tables}
\label{s:tables}

Most of these results can be computed by hand from Theorem \ref{t:H1}, 
or using Proposition \ref{p:fiber} and the classification of real forms (i.e. the adjoint case). 

The Atlas of Lie Groups and Representations software can be used to calculate
$H^1(\Gamma,G)$ for any real form of a reductive group. This was used to check the tables, and 
for the Spin groups. 
See {\tt www.liegroups.org/tables/galois} for more information about using the software
to compute Galois cohomology.

\subsec{Classical groups}
\label{s:classical}

\begin{tabular}{|l|l|l|}
\hline
Group&$|H^1(\Gamma,G)|$& \\
\hline
$SL(n,\R),GL(n,\R)$ & $1$ &\\  
\hline
$SU(p,q)$ & $ \lfloor\frac p2\rfloor+\lfloor\frac q2\rfloor+1$
&$\begin{gathered}
\text{Hermitian forms of rank $p+q$ and }\\\text{discriminant $(-1)^q$}\end{gathered}$ \\\hline
$SL(n,\mathbb H)$ & $2$ & $\R^*/\text{Nrd}_{\H/\R}(\H^*)$ \\\hline
$Sp(2n,\R)$ & $1$ & real symplectic forms of rank $2n$\\\hline
$Sp(p,q)$ & $p+q+1$ & quaternionic Hermitian forms of rank $p+q$\\\hline
$SO(p,q)$ & 
$\lfloor \frac p2\rfloor+\lfloor \frac q2\rfloor+1$
&
$
\begin{gathered}
\text{real symmetric bilinear forms of rank $n$}\\\text{and discriminant $(-1)^q$}
\end{gathered}
$
\\\hline
$SO^*(2n)$ & 2 &\\
\hline
\end{tabular}

\bigskip

Here $\H$ is the quaternions, and $Nrd$ is the reduced norm map from
$\H^*$ to $\R^*$ (see \cite[Lemma 2.9]{platonov_rapinchuk}).
For more information on Galois cohomology of classical groups 
see
\cite{serre_galois}, \cite[Sections 2.3 and 6.6]{platonov_rapinchuk} and \cite[Chapter VII]{bookofinvolutions}.

\subsec{Simply connected groups}
\label{s:simply}

The only  simply connected  groups with classical root system, which are not  
in the table in Section \ref{s:classical} 
are $Spin(p,q)$ and $Spin^*(2n)$.

Define $\delta(p,q)$ by the following table, depending on $p,q\pmod 4$.

$$
\begin{tabular}{c|cccc}
&0&1&2&3\\\hline
0&3&2&2&2\\
1&2&1&1&0\\
2&2&1&0&0\\
3&2&0&0&0  
\end{tabular}
$$

See Example \ref{ex:spin} for an explanation of these numbers.
\bigskip

\centerline{\begin{tabular}{|l|l|}
\hline
Group&$|H^1(\Gamma,G)|$\\\hline
$Spin(p,q)$& $\lfloor\frac{p+q}4\rfloor +\delta(p,q)$\\  
\hline
$Spin^*(2n)$& $2$\\\hline
\end{tabular}}

\bigskip
\bigskip

\begin{tabular}{|c|c|c|c|c|c|}
\hline
\multicolumn{6}{|c|}{Simply connected exceptional groups}\\
\hline
inner class&group&$K$&real rank&name&$|H^1(\Gamma,G)|$\\
\hline
compact&$E_6$ &$A_5A_1$ & $4$ & $\begin{gathered}\text{quasisplit'}\\ {\text{quaternionic}}\end{gathered} $& 3\\\hline
&$E_6$ &$D_5T$ & $2$ & Hermitian & 3\\\hline
&$E_6$ &$E_6$ & $0$ & compact & 3\\\hline
split&$E_6$ &$C_4$ & $6$ & split & 2\\\hline
&$E_6$ &$F_4$ & $2$ & quasicompact & 2\\\hline\hline
compact& $E_7$ & $A_7$ & $7$ &split & $2 $\\\hline
& $E_7$ & $D_6A_1$ & $4$ &quaternionic & $4$\\\hline
& $E_7$ & $E_6T$ & $3$ &Hermitian & $2$\\\hline
& $E_7$ & $E_7$ & $0$ &compact & $4$\\\hline
\hline
compact& $E_8$ & $D_8$ & $8$ &split & $3$\\\hline
& $E_8$ & $E_7A_1$ & $4$ &quaternionic & $3$\\\hline
& $E_8$ & $E_8$ & $0$ &compact & $3$\\\hline
\hline
compact& $F_4$ & $C_3A_1$ & $4$ &split & $3$\\\hline
& $F_4$ & $B_4$ & $1$ & & $3$\\\hline
& $F_4$ & $F_4$ & $0$ &compact & $3$\\\hline
\hline
compact& $G_2$ & $A_1A_1$ & $2$ &split & $2$\\\hline
& $G_2$ & $G_2$ & $0$ & compact & $2$\\\hline
\end{tabular}

\newpage

\subsec{Adjoint groups}
\label{s:adjoint}

If $G$ is adjoint $|H^1(\Gamma,G)|$ is the number of real forms 
in the given inner class, which is well known. 
We also include the  component group, which is useful in connection with 
Corollary  \ref{c:fiber}.

One technical point arises in the case  of $PSO^*(2n)$. If $n$ is even there are 
two real forms which are related by an outer, but not an inner, automorphism. 
See Remark \ref{r:notserre}.

\bigskip

\begin{tabular}{|c|c|c|}
\hline
\multicolumn{3}{|c|}{Adjoint classical groups}\\
\hline
Group&$|\pi_0(G(\R))|$&$|H^1(\Gamma,G)|$ \\
\hline
$PSL(n,\R)$  &  $
\begin{cases}
  2&n\text{ even}\\
  1&n\text{ odd}\\
\end{cases}$
& $
\begin{cases}
  2&n\text{ even}\\
  1&n\text{ odd}\\
\end{cases}$
\\\hline
$PSL(n,\H)$ &$1$&$2$\\\hline
$PSU(p,q)$ &$
\begin{cases}
  2&p=q\\1&\text{ otherwise}
\end{cases}
$
 & $\lfloor \frac{p+q}2\rfloor+1$\\\hline
$PSO(p,q)$ &
$
\begin{cases}
1&pq=0\\
1&p,q\text{ odd and } p\ne q\\
4&p=q\text{ even}\\
2&\text{otherwise}  
\end{cases}
$
&$
\begin{cases}
\lfloor\frac{p+q+2}4\rfloor&p,q\text{ odd}\\  
\frac{p+q}4+3&p,q\text{ even}, p+q=0\pmod4\\
\frac{p+q-2}4+2&p,q\text{ even}, p+q=2\pmod4\\
\frac{p+q+1}2&p,q\text{ opposite parity}\\  
\end{cases}
$
\\\hline
$PSO^*(2n)$ &
$\begin{cases}
  2&n\text{ even}\\
  1&n\text{ odd}\\
\end{cases}
$
&$
\begin{cases}
\frac n2+3&n\text{ even}  \\
\frac {n-1}2+2&n\text{ odd}  
\end{cases}
$
\\\hline

$PSp(2n,\R)$ &$2$&$\lfloor \frac n2\rfloor +2$\\\hline
$PSp(p,q)$ &
$
\begin{cases}
2&p=q\\
1&else  
\end{cases}
$
&$\lfloor \frac {p+q}2\rfloor +2$\\\hline
\end{tabular}

\bigskip

The groups $E_8,F_4$ and $G_2$ are both simply connected and adjoint. 
Furthermore in type $E_6$ the center of 
the simply connected group $\Gsc$ has order $3$, and 
it follows that $H^1(\Gamma,\Gad)=H^1(\Gamma,\Gsc)$ in these cases.
So the only groups not covered by the table in Section \ref{s:simply} 
are adjoint groups of type $E_7$.

\medskip

\begin{tabular}{|c|c|c|c|c|c|c|}
\hline
\multicolumn{7}{|c|}{Adjoint exceptional groups}\\
\hline
inner class&group&$K$&real rank&name&$\pi_0(G(\R))$ &$|H^1(G)|$\\
\hline
compact& $E_7$ & $A_7$ & $7$ &split & 2&$4$\\\hline
& $E_7$ & $D_6A_1$ & $4$ &quaternionic & $1$ &$4$\\\hline
& $E_7$ & $E_6T$ & $3$ &Hermitian & $2$ &$4$\\\hline
& $E_7$ & $E_7$ & $0$ &compact & $1$&$4$\\\hline
\end{tabular}
 
\bibliographystyle{plain}
\def\cprime{$'$} \def\cftil#1{\ifmmode\setbox7\hbox{$\accent"5E#1$}\else
  \setbox7\hbox{\accent"5E#1}\penalty 10000\relax\fi\raise 1\ht7
  \hbox{\lower1.15ex\hbox to 1\wd7{\hss\accent"7E\hss}}\penalty 10000
  \hskip-1\wd7\penalty 10000\box7}
  \def\cftil#1{\ifmmode\setbox7\hbox{$\accent"5E#1$}\else
  \setbox7\hbox{\accent"5E#1}\penalty 10000\relax\fi\raise 1\ht7
  \hbox{\lower1.15ex\hbox to 1\wd7{\hss\accent"7E\hss}}\penalty 10000
  \hskip-1\wd7\penalty 10000\box7}
  \def\cftil#1{\ifmmode\setbox7\hbox{$\accent"5E#1$}\else
  \setbox7\hbox{\accent"5E#1}\penalty 10000\relax\fi\raise 1\ht7
  \hbox{\lower1.15ex\hbox to 1\wd7{\hss\accent"7E\hss}}\penalty 10000
  \hskip-1\wd7\penalty 10000\box7}
  \def\cftil#1{\ifmmode\setbox7\hbox{$\accent"5E#1$}\else
  \setbox7\hbox{\accent"5E#1}\penalty 10000\relax\fi\raise 1\ht7
  \hbox{\lower1.15ex\hbox to 1\wd7{\hss\accent"7E\hss}}\penalty 10000
  \hskip-1\wd7\penalty 10000\box7} \def\cprime{$'$} \def\cprime{$'$}
  \def\cprime{$'$} \def\cprime{$'$} \def\cprime{$'$} \def\cprime{$'$}
  \def\cprime{$'$} \def\cprime{$'$}
\begin{bibdiv}
\begin{biblist}

\bib{bowdoin}{incollection}{
      author={Adams, J.},
       title={Lifting of characters},
        date={1991},
   booktitle={Harmonic analysis on reductive groups ({B}runswick, {ME}, 1989)},
      series={Progr. Math.},
      volume={101},
   publisher={Birkh\"auser Boston},
     address={Boston, MA},
       pages={1\ndash 50},
      review={\MR{MR1168475 (93d:22014)}},
}

\bib{snowbird}{incollection}{
      author={Adams, J.},
       title={Guide to the atlas software: Computational representation theory
  of real reductive groups},
        date={2008},
   booktitle={Representation theory of real reductive groups, proceedings of
  conference at snowbird, july 2006},
      series={Contemp. Math.},
   publisher={Amer. Math. Soc.},
}

\bib{abv}{book}{
      author={Adams, J.},
      author={Barbasch, Dan},
      author={Vogan, David~A., Jr.},
       title={The {L}anglands classification and irreducible characters for
  real reductive groups},
      series={Progress in Mathematics},
   publisher={Birkh\"auser Boston Inc.},
     address={Boston, MA},
        date={1992},
      volume={104},
        ISBN={0-8176-3634-X},
      review={\MR{MR1162533 (93j:22001)}},
}

\bib{algorithms}{article}{
      author={Adams, J.},
      author={du~Cloux, Fokko},
       title={Algorithms for representation theory of real reductive groups},
        date={2009},
        ISSN={1474-7480},
     journal={J. Inst. Math. Jussieu},
      volume={8},
      number={2},
       pages={209\ndash 259},
      review={\MR{MR2485793}},
}

\bib{av1}{article}{
      author={Adams, J.},
      author={Vogan, David~A., Jr.},
       title={{$L$}-groups, projective representations, and the {L}anglands
  classification},
        date={1992},
        ISSN={0002-9327},
     journal={Amer. J. Math.},
      volume={114},
      number={1},
       pages={45\ndash 138},
      review={\MR{MR1147719 (93c:22021)}},
}

\bib{borel_tits}{article}{
      author={Borel, Armand},
      author={Tits, Jacques},
       title={Groupes r\'eductifs},
        date={1965},
        ISSN={0073-8301},
     journal={Inst. Hautes \'Etudes Sci. Publ. Math.},
      number={27},
       pages={55\ndash 150},
      review={\MR{0207712 (34 \#7527)}},
}

\bib{borovoi}{article}{
      author={Borovoi, M.~V.},
       title={Galois cohomology of real reductive groups and real forms of
  simple lie algebras},
        date={1988},
     journal={Funct. Anal. Appl.},
      volume={22},
      number={2},
       pages={135\ndash 136},
}

\bib{kaletha_rigid}{article}{
      author={Kaletha, Tasho},
       title={Rigid inner forms of real and p-adic groups},
        date={2013},
        note={arXiv:1304.3292},
}

\bib{bookofinvolutions}{book}{
      author={Knus, Max-Albert},
      author={Merkurjev, Alexander},
      author={Rost, Markus},
      author={Tignol, Jean-Pierre},
       title={The book of involutions},
      series={American Mathematical Society Colloquium Publications},
   publisher={American Mathematical Society},
     address={Providence, RI},
        date={1998},
      volume={44},
        ISBN={0-8218-0904-0},
        note={With a preface in French by J. Tits},
      review={\MR{1632779 (2000a:16031)}},
}

\bib{kottwitzStable}{article}{
      author={Kottwitz, Robert~E.},
       title={Stable trace formula: elliptic singular terms},
        date={1986},
        ISSN={0025-5831},
     journal={Math. Ann.},
      volume={275},
      number={3},
       pages={365\ndash 399},
         url={http://dx.doi.org/10.1007/BF01458611},
      review={\MR{858284 (88d:22027)}},
}

\bib{matsuki}{article}{
      author={Matsuki, Toshihiko},
       title={The orbits of affine symmetric spaces under the action of minimal
  parabolic subgroups},
        date={1979},
        ISSN={0025-5645},
     journal={J. Math. Soc. Japan},
      volume={31},
      number={2},
       pages={331\ndash 357},
         url={http://dx.doi.org/10.2969/jmsj/03120331},
      review={\MR{527548 (81a:53049)}},
}

\bib{platonov_rapinchuk}{book}{
      author={Platonov, Vladimir},
      author={Rapinchuk, Andrei},
       title={Algebraic groups and number theory},
      series={Pure and Applied Mathematics},
   publisher={Academic Press Inc.},
     address={Boston, MA},
        date={1994},
      volume={139},
        ISBN={0-12-558180-7},
        note={Translated from the 1991 Russian original by Rachel Rowen},
      review={\MR{MR1278263 (95b:11039)}},
}

\bib{serre_galois}{book}{
      author={Serre, Jean-Pierre},
       title={Galois cohomology},
     edition={English},
      series={Springer Monographs in Mathematics},
   publisher={Springer-Verlag},
     address={Berlin},
        date={2002},
        ISBN={3-540-42192-0},
        note={Translated from the French by Patrick Ion and revised by the
  author},
      review={\MR{1867431 (2002i:12004)}},
}

\bib{shelstad_innerforms}{article}{
      author={Shelstad, D.},
       title={Characters and inner forms of a quasi-split group over {${\bf
  R}$}},
        date={1979},
        ISSN={0010-437X},
     journal={Compositio Math.},
      volume={39},
      number={1},
       pages={11\ndash 45},
      review={\MR{MR539000 (80m:22023)}},
}

\bib{springer_book}{book}{
      author={Springer, T.~A.},
       title={Linear algebraic groups},
     edition={Second},
      series={Progress in Mathematics},
   publisher={Birkh\"auser Boston Inc.},
     address={Boston, MA},
        date={1998},
      volume={9},
        ISBN={0-8176-4021-5},
      review={\MR{MR1642713 (99h:20075)}},
}

\bib{vogan_green}{book}{
      author={Vogan, David~A., Jr.},
       title={Representations of real reductive {L}ie groups},
      series={Progress in Mathematics},
   publisher={Birkh\"auser Boston},
     address={Mass.},
        date={1981},
      volume={15},
        ISBN={3-7643-3037-6},
      review={\MR{MR632407 (83c:22022)}},
}

\bib{ic4}{article}{
      author={Vogan, David~A., Jr.},
       title={Irreducible characters of semisimple {L}ie groups. {IV}.
  {C}haracter-multiplicity duality},
        date={1982},
        ISSN={0012-7094},
     journal={Duke Math. J.},
      volume={49},
      number={4},
       pages={943\ndash 1073},
      review={\MR{MR683010 (84h:22037)}},
}

\bib{vogan_local_langlands}{incollection}{
      author={Vogan, David~A., Jr.},
       title={The local {L}anglands conjecture},
        date={1993},
   booktitle={Representation theory of groups and algebras},
      series={Contemp. Math.},
      volume={145},
   publisher={Amer. Math. Soc.},
     address={Providence, RI},
       pages={305\ndash 379},
      review={\MR{MR1216197 (94e:22031)}},
}

\bib{warner_I}{book}{
      author={Warner, Garth},
       title={Harmonic analysis on semi-simple {L}ie groups. {II}},
   publisher={Springer-Verlag},
     address={New York},
        date={1972},
        note={Die Grundlehren der mathematischen Wissenschaften, Band 189},
      review={\MR{0499000 (58 \#16980)}},
}

\end{biblist}
\end{bibdiv}

\enddocument
\end